\numberwithin{equation}{section}
\theoremstyle{plain}
\newtheorem{thm}{Theorem}[section]
\newtheorem{lem}{Lemma}[section]
\newtheorem{prop}{Proposition}[section]
\newtheorem{defn}{Definition}[section]
\theoremstyle{definition}
\newtheorem{rem}{Remark}[section]
\newcommand{\gradt}{\|\nabla\theta\|}
\newcommand{\rd}{\mathbb{R}^d}
\begin{document}
\title{Strong solutions to a modified Michelson-Sivashinsky equation}
\date{\today}
\author{Hussain Ibdah\footnote{Department of Mathematics, Texas A\&M Univeristy, College Station, TX 77843, USA (hibdah@math.tamu.edu)}}
%\subjclass[2010]{35K55,35K15,35B50,35B65,35B60}
%\keywords{Maximum Principle, Global Well-Posedness}
\maketitle
\begin{abstract}
We prove a global well-posedness and regularity result of strong solutions to a slightly modified Michelson-Sivashinsky equation in any spatial dimension and in the absence of physical boundaries. Local-in-time well-posedness (and regularity) in the space $W^{1,\infty}(\rd)$ is established and is shown to be global if in addition the initial data is either periodic or vanishes at infinity. The proof of the latter result utilizes ideas previously introduced by Kiselev, Nazarov, Volberg and Shterenberg to handle the critically dissipative surface quasi-geostrophic equation and the critically dissipative fractional Burgers equation. Namely, the global regularity result is achieved by  constructing a time-dependent modulus of continuity that must be obeyed by the solution of the initial-value problem for all time, preventing blowup of the gradient of the solution. This work provides an example where regularity is shown to persist even when a-priori bounds are not available.
\end{abstract}
\textbf{2010 MSC:} 35K55, 35B65\\
\textbf{Keywords:} Global regularity, Michelson-Sivashinsky, Nonlinear-nonlocal parabolic equation
\section{Introduction}
\subsection{Background.}
We study the following nonlinear, nonlocal parabolic initial-value problem 
\begin{equation}\label{eq1}
\begin{aligned}
&\partial_t\theta-\nu\Delta\theta=\lambda|\nabla\theta|^{p}+\mu(-\Delta)^{\alpha}\theta,\\
&\theta(0,x)=\theta_0(x),
\end{aligned}
\end{equation}
where $\nu>0$, $\alpha\in(0,1/2)$, $p\in[1,\infty)$, $\mu>0$, $\lambda\in\mathbb{R}$ and $\theta:[0,\infty)\times\rd\rightarrow\mathbb{R}$ is a scalar. We show that if $\theta_0\in W^{1,\infty}(\rd)$ and is periodic (with arbitrary period $L>0$ in every direction) or vanishes at infinity, then there is a unique globally regular solution to \eqref{eq1} satisfying the bound
\[
\|\nabla\theta(t,\cdot)\|_{L^{\infty}}\leq Be^{C_0t},
\] 
where $B$ depends only on $\|\theta_0\|_{W^{1,\infty}}$ and $C_0$ depends on $B,\nu,\alpha,d,\mu$ (see Theorems \ref{shortimeexis} and \ref{mainthm}, below).

Let us start by discussing the motivation behind this work and provide some background information. One of the outstanding questions in the analysis of partial differential equations is whether the Kuramoto-Sivashinsky (KS) equation develops a singularity in finite time or whether solutions arising from smooth enough initial data remain smooth for all time (in spatial dimension $d\geq2$). In its scalar form, this equation reads
\begin{equation}\label{KSE}
\partial_t\theta(t,x)+\Delta^2\theta(t,x)+\frac{1}{2}|\nabla\theta(t,x)|^2+\Delta\theta(t,x)=0.
\end{equation}
In spatial dimension $d=1$, the solution to the initial value problem associated with \eqref{KSE} (in the periodic or whole space setting) does not develop any singularities in finite time starting from smooth enough initial data $\theta_0$, see for instance \cite{NicoSheurer1984, Tadmor1986}. In dimensions $d=2,3$, and under the assumption of radially symmetric initial data in an annular region with homogenous Neumann boundary conditions, global regularity was proven in \cite{BBT2003}. Nevertheless, the question of global well-posedness of the IVP associated with \eqref{KSE} remains open, in the large, for arbitrary smooth enough initial data when the spatial dimension $d$ is larger than one. 

The KS equation was derived independently by Sivashinsky \cite{Sivashinsky1977} as a model for flame propagation (see also \cite{MichelsonSivashinsky1977}), and by Kuramoto \cite{Kuramoto1978} in the context of a diffusion-induced chaos in a chemical reaction system (see also \cite{KuramotoTsuzuki1975, KuramotoTsuzuki1976}). The original model derived by Sivashinsky in \cite{Sivashinsky1977} and discussed in \cite{MichelsonSivashinsky1977} reads 
\begin{equation}\label{fullcombs}
\partial_t\theta+4(1+\epsilon)^2\Delta^{2}\theta+\epsilon\Delta\theta+\frac{1}{2}|\nabla\theta|^2=(1-\sigma)(-\Delta)^{1/2}\theta,
\end{equation}
where $\sigma\in(0,1)$ is the coefficient of thermal expansion of the gas, $\epsilon=(L_0-L)/(1-L_0)$, with $L$ being the Lewis number of the component of the combustible mixture limiting the reaction, and  $L_0<1$ being the critical Lewis number depending on various physical properties of the mixture. Here, $(-\Delta)^{\alpha}$, $\alpha\in(0,1)$, is the nonlocal operator, whose Fourier symbol is given by $|k|^{2\alpha}$. Equivalently, it can be represented in terms of the singular integral
\begin{equation}\label{singintintro}
(-\Delta)^{\alpha}\theta(x)=C_{d,\alpha}P.V.\int_{\mathbb{R}^d}\frac{\theta(x)-\theta(x-z)}{|z|^{d+2\alpha}}\ dz,
\end{equation}
for $\alpha\in(0,1)$, sufficiently regular $\theta$, and $C_{d,\alpha}>0$ being a normalizing constant, degenerating as $\alpha\rightarrow 0^+$ or $1^-$. When $\epsilon>0$, upon rescaling, one formally recovers equation \eqref{KSE} from \eqref{fullcombs} by setting $\sigma=1$. Much of the analysis done in the literature is carried out for the case when $\epsilon>0$ and $\sigma=1$. To the best of our knowledge, no rigorous mathematical treatment for the case $\sigma\neq1$ has been done. Furthermore, when $L>L_0$ ($\epsilon<0$), asymptotic analysis leads to dropping out the hyperviscous term $\Delta^2$ in \eqref{fullcombs}, and the instabilities in the flame in this case arise as a consequence of thermal expansion on its own \cite{MichelsonSivashinsky1977, Sivashinsky1977}, and one gets (upon rescaling)
\begin{equation}\label{MS}
\partial_t\theta-\Delta\theta-(-\Delta)^{1/2}\theta+\frac{1}{2}|\nabla\theta|^2=0.
\end{equation}
In other words, it is physically possible to have $\epsilon<0$; we refer the reader to the survey articles \cite{Matalon2007, Sivashinsky1983} for further insight regarding the physical role of the parameters in \eqref{fullcombs} in the theory of combustion.

Equation \eqref{MS} is called the Michelson-Sivashinsky (MS) equation. It is a refined combustion model based on the Darrieus–Landau flame stability analysis, and was also recently derived in \cite{Navin2010, TP2019}. Several computational studies were performed on the periodic one-dimensional version of \eqref{MS}, see for instance \cite{GS1990, MS1982, MichelsonSivashinsky1977, Pumir1985}, where typical turbulence-induced chaotic behavior was noted. Numerical observations have led several authors to consider special solutions of \eqref{MS} in the one-dimensional case (see, for instance, \cite{KO2013,OGKP1997,Renardy1987,TFH1988489} and the references therein). However, the global regularity of the one-dimensional version of \eqref{MS} does not present any mathematical challenges. Indeed, one has a-priori control over the $H^1$ norm of the solution, which can be bootstrapped to control higher order Sobolev norms, with the nonlocal part causing at most growth in time, but not blow up.

In dimensions higher than one, one runs into the same technical difficulties as in the KS equation. Namely, no a-priori bound, not even in $L^2$, can be obtained, due to the nonlinear term. Thus, one can only prove short-time existence, uniqueness and regularity via standard arguments for smooth enough initial data. On the other hand, the fact that the dissipative operator in the KS, $\Delta^2$, is replaced by the standard Laplacian, $(-\Delta)$, in the MS equation \eqref{MS}, there might be hope in controlling the Lipschitz constant of the solution to \eqref{MS} (i.e., proving a ``maximum principle'' for the gradient of the solution to \eqref{MS}), which can then be bootstrapped to control higher order derivatives, analogous to the viscous Burgers equation. This was also the basis of the recent work \cite{LariosYamazaki2019}, where the authors propose a modification of the KSE in its vectorial form. Namely, by replacing hyperviscosity with standard Laplacian in one component, they were able to bootstrap the resulting maximum principle and show that smoothness persists under evolution.

A rather ingenious method developed fairly recently by Kiselev, Nazarov and Volberg \cite{KNV2007} (see also \cite{KNS2008}) was used to obtain a maximum principle for the critically dissipative surface quasi-geostrophic (SQG) equation (and the fractal Burgers equation). Evolution under the critically dissipative SQG equation (when $d=2$) is described by
\begin{equation}\label{SQG}
\begin{cases}
	&\partial_t\theta+(-\Delta)^{1/2}\theta+(u\cdot\nabla)\theta=0,\\
	&u=(u_1,u_2)=(-R_2\theta,R_1\theta),
\end{cases}
\end{equation}
where $R_1$, $R_2$ are the usual Riesz transforms in $\mathbb{R}^2$. Even though \eqref{SQG} has a maximum principle of the form $\|\theta(t,\cdot)\|_{L^{\infty}}\leq\|\theta_0\|_{L^{\infty}}$, this control although useful, does not necessarily prevent blowup in general, and one would require control of a stronger norm in order to address the global existence of smooth solutions in the positive direction. The elegant work in \cite{KNS2008,KNV2007} introduced techniques that allow one to compare dissipation, (gradient) nonlinearity and nonlocality in the local (pointwise) setting, without any a-priori assumptions other than short-time existence and regularity. The main idea is to show that if the initial data has a certain modulus of continuity (see Definition \ref{defmod}, below), and if the solution is guaranteed to be smooth for short time, then preservation of the modulus of continuity on some non-degenerate time interval $[0,T]$ implies control of the Lipschitz constant of the solution on that interval, which in many cases is sufficient to prevent blowup of higher order norms. The difficulty lies in constructing a modulus of continuity that is able to (locally) balance dissipation with the instabilities that may arise from nonlinearity and nonlocality for all time. In many cases this is not a trivial task, see for instance \cite{DKSV2014, Kiselev2011, KN2010, MX2019} and the references therein where this program was expanded and built upon in several other scenarios.

Such techniques rely upon pointwise estimates, and so it is crucial to be able to
\begin{enumerate}[(1)]
\item make sense of the PDE in the classical way,
\item make sure the solution enjoys parabolic regularity $C_t^1C_x^2$,
\item obtain pointwise estimates of all terms in the PDE, preferably via quantifying continuity of such terms in terms of H\"older estimates, or the modulus of continuity itself,
\item have a regularity criterion in terms of the Lipschitz constant of the solution.
\end{enumerate}

That being said, in order to study the evolution of moduli of continuity under \eqref{MS} (or even formally obtain a maximum principle), a pointwise upper bound for the nonlocal part must be obtained, ideally in terms of the modulus of continuity being studied. In fact, as will be demonstrated later on, all what one really needs is a bound that does not exceed a constant multiple of $\|\nabla\theta\|_{L^{\infty}}$. However, this does not seem to be possible: the square root of the Laplacian has the representation
\[
(-\Delta)^{1/2}\theta=\sum_{i=1}^dR_i\partial_i\theta,
\]
with $\{R_i\}_{i=1}^d$ being the standard Riesz transforms, and it is well known that $L^{\infty}$ is a bad space for those operators, see for instance \cite{Stein1970book}. That is, even when $\theta$ has a modulus of continuity and its Lipschitz constant is under control, no information can be obtained about $(-\Delta)^{1/2}\theta$ in terms of the controlled quantities; we refer the reader to \cite{Stein1961, Wheeden1968} for a classical characterization of the singular integral \eqref{singintintro}, and \cite{CaffarelliSilvestre2007} for a more recent one. Nevertheless, see the conclusion of this paper for further remarks about a possible remedy to this situation. This has led the author to consider a slightly weaker model than \eqref{MS}, namely equation \eqref{maineq}, below.
\subsection{Main Results.}\label{secmainresult}
 With the previous remarks in mind, replacing the nonlocal part of equation \eqref{MS} with $(-\Delta)^{\alpha}$, where $\alpha\in(0,1/2)$, allows one not only to locally bound the corresponding nonlocality (Lemma \ref{bdfraclap}, below), but also to obtain a continuity estimate. Indeed, if $\theta\in C^{0,\beta}$ with $0<2\alpha<\beta\leq1$, then $(-\Delta)^{\alpha}\theta\in C^{0,\beta-2\alpha}$ \cite{Silvestre2007}. Similarly, we show in Lemma \ref{fraclapmod}, below, that while the operator $(-\Delta)^{\alpha}$ doesn't quite preserve an abstract modulus of continuity, it doesn't distort it too much either. This allows us to control the nonlocality and prove that dissipation will prevail, thereby proving that strong (and hence classical) solutions exist and are unique for all time. The power of the nonlinearity does not seem to introduce any extra complications in the proof in the absence of physical boundaries, see for instance \cite{PSbook2019} for various blowup results for viscous Hamilton-Jacobi equations in the presence of boundaries. Thus, in this work we study the initial value problem associated with 
\begin{equation}\label{maineq}
\partial_t\theta(t,x)-\nu\Delta\theta(t,x)=\lambda\left|\nabla\theta(t,x)\right|^p+\mu(-\Delta)^{\alpha}\theta(t,x),
\end{equation}
where $\nu>0$, $\alpha\in(0,1/2)$, $p\in[1,\infty)$, $\mu>0$ and $\lambda\in\mathbb{R}$, with no further restrictions on such parameters. We study evolution under equation \eqref{maineq} starting from a $\theta_0\in W^{1,\infty}(\mathbb{R}^d)$ and we look for strong solutions on an interval of time $[T_1,T_2]$. By a strong solution, we mean
 \begin{defn}\label{strongsol}
Let $T_2>T_1$, and suppose $\theta_0\in W^{1,\infty}\left(\mathbb{R}^d\right)$. We say $\theta$ is a strong solution to \eqref{maineq} on $[T_1,T_2]$ corresponding to $\theta_0$ if $\theta\in C\left([T_1,T_2];W^{1,\infty}\left(\mathbb{R}^d\right)\right)$ and 
\begin{align*}
\theta(t,x)=&\int_{\mathbb{R}^d}\Psi(t-T_1,x-y)\theta_0(y)\ dy+\lambda\int_{T_1}^t\int_{\mathbb{R}^d}\Psi(t-s,x-y)\left|\nabla\theta(s,y)\right|^pdyds\\
&+\mu\int_{T_1}^t\int_{\mathbb{R}^d}\Psi(t-s,x-y)\left(-\Delta\right)^{\alpha}\theta(s,y)\ dy\ ds, \quad (t,x)\in[T_1,T_2]\times\mathbb{R}^d,
\end{align*}
where $\Psi$ is the $d$-dimensional heat kernel,
\[
\Psi(s,y):=(4\pi\nu s)^{-d/2}\exp\left(\frac{-|y|^2}{4\nu s}\right),\quad (s,y)\in\mathbb{R}^+\times\mathbb{R}^d.
\]
\end{defn}
\begin{rem}
Lemma \ref{bdfraclap}, below, allows us to make sense of $(-\Delta)^{\alpha}\theta$ as an $L^{\infty}$ function.
\end{rem}

Using standard (classical) properties of the heat kernel, one can show that strong solutions satisfy the initial condition in the sense
\begin{equation}\label{IC}
\lim_{t\rightarrow T_1^+}\|\theta(t,\cdot)-\theta_0\|_{W^{1,\infty}}=0,
\end{equation}
and are classical solutions to the PDE \eqref{maineq}. By classical, we mean that they are once continuously differentiable in time and twice in space on the set $(T_1,T_2]\times\rd$ and satisfy \eqref{maineq} in the pointwise sense. In addition, their time derivatives have the regularity $\partial_t\theta\in L^1([T_1,T_2];L^{\infty}(\rd))$, and a regularity criterion in terms of the Lipschitz constant of the solution should not be surprising. That is, we first establish the following local well-posedness result.
\begin{thm}\label{shortimeexis}
Let $d\in \mathbb{N}$, $\nu>0$, $\alpha\in(0,1/2)$, $\lambda\in\mathbb{R}$, $\mu>0$, $p\in[1,\infty)$ and $\theta_0\in W^{1,\infty}\left(\mathbb{R}^d\right)$ with no further restrictions. Then there is a $T_0=T_0(\theta_0,d,\alpha,p,\nu,\mu,\lambda)>0$ and a strong solution $\theta$ to \eqref{maineq} on $[0,T_0]$  corresponding to $\theta_0$ and depending continuously on the initial data in the $W^{1,\infty}\left(\mathbb{R}^d\right)$ norm. In particular, it is the only strong solution. Furthermore, if $\theta$ is the strong solution corresponding to $\theta_0$ on an arbitrary interval of time $[0,T]$, then $\partial_t\theta\in L^1\left([0,T];L^{\infty}\left(\mathbb{R}^d\right)\right)$, $\theta\in C^1_tC^2_x\left((0,T]\times\mathbb{R}^d\right)$,
\[
\lim_{t\rightarrow0^+}\|\theta(t,\cdot)-\theta_0\|_{W^{1,\infty}}=0,
\]
and
\[
\partial_t\theta(t,x)-\nu\Delta\theta(t,x)=\lambda\left|\nabla\theta(t,x)\right|^p+\mu\left(-\Delta\right)^{\alpha}\theta(t,x),
\]
holds true in the classical (pointwise) sense for every $(t,x)\in(0,T]\times\mathbb{R}^d$. If $[0,T_*)$ is the maximal interval of existence of the strong solution, then we must have
\[
T_*=\sup\left\{T>0:\|\nabla\theta(t,\cdot)\|_{L^{\infty}}<\infty\ \forall t\in[0,T]\right\}.
\]
\end{thm}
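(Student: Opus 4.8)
The plan is to treat \eqref{maineq} as a semilinear heat equation and run a contraction mapping in the mild formulation of Definition \ref{strongsol}. I would fix $T>0$, work in $X_T:=C([0,T];W^{1,\infty}(\rd))$, and let $\mathcal{T}$ denote the map sending $\theta\in X_T$ to the right-hand side of the Duhamel identity in Definition \ref{strongsol} with $T_1=0$. The elementary heat-kernel bounds $\|\Psi(\tau,\cdot)\ast g\|_{L^\infty}\le\|g\|_{L^\infty}$ and $\|\nabla\Psi(\tau,\cdot)\ast g\|_{L^\infty}\le C(\nu)\,\tau^{-1/2}\|g\|_{L^\infty}$, combined with the pointwise inequality $|\nabla\theta(s,\cdot)|^p\le\|\nabla\theta(s,\cdot)\|_{L^\infty}^p$ and Lemma \ref{bdfraclap} in the form $\|(-\Delta)^\alpha\theta(s,\cdot)\|_{L^\infty}\le C(d,\alpha)\|\theta(s,\cdot)\|_{W^{1,\infty}}$, give $\|\mathcal{T}\theta\|_{X_T}\le\|\theta_0\|_{W^{1,\infty}}+C\,(T+T^{1/2})\,Q\big(\|\theta\|_{X_T}\big)$ for an explicit polynomial $Q$ depending only on $d,\alpha,p,\nu,\mu,\lambda$; hence $\mathcal{T}$ maps the ball $\{\|\theta\|_{X_T}\le 2\|\theta_0\|_{W^{1,\infty}}\}$ into itself once $T\le T_0$, with $T_0$ depending on the data only through $\|\theta_0\|_{W^{1,\infty}}$ and the parameters. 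Using the local Lipschitz bound $\big||a|^p-|b|^p\big|\le p\,\big(|a|^{p-1}+|b|^{p-1}\big)\,|a-b|$ and linearity of $(-\Delta)^\alpha$, the same estimates yield $\|\mathcal{T}\theta_1-\mathcal{T}\theta_2\|_{X_T}\le\tfrac12\|\theta_1-\theta_2\|_{X_T}$ after shrinking $T_0$ if necessary, so the contraction mapping principle produces a unique fixed point, i.e.\ the unique strong solution on $[0,T_0]$. Subtracting the Duhamel identities for two sets of initial data and absorbing the contraction term gives Lipschitz continuous dependence in $W^{1,\infty}$; the same difference estimate together with Gr\"onwall's inequality gives uniqueness on any interval $[0,T]$ on which two strong solutions exist.

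Next I would bootstrap regularity on $(0,T]$. Set $f(s,\cdot):=\lambda|\nabla\theta(s,\cdot)|^p+\mu(-\Delta)^\alpha\theta(s,\cdot)$, which lies in $L^\infty([0,T];L^\infty(\rd))$ by the previous step. Fixing $\gamma\in(2\alpha,1)$ and using the smoothing bounds $[\nabla\Psi(\tau,\cdot)\ast g]_{C^{0,\gamma}}\le C\tau^{-(1+\gamma)/2}\|g\|_{L^\infty}$ and $[\Psi(\tau,\cdot)\ast\nabla\theta_0]_{C^{0,\gamma}}\le C\tau^{-\gamma/2}\|\nabla\theta_0\|_{L^\infty}$ in the spatially differentiated Duhamel formula shows $\nabla\theta(t,\cdot)\in C^{0,\gamma}(\rd)$ for $t>0$, with $[\nabla\theta(t,\cdot)]_{C^{0,\gamma}}\lesssim t^{-\gamma/2}$ as $t\to0^+$. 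Consequently $|\nabla\theta(t,\cdot)|^p\in C^{0,\gamma}$ (composition with the locally Lipschitz map $v\mapsto|v|^p$), while $\theta(t,\cdot)\in C^{1,\gamma}$ and $0<2\alpha<1+\gamma$ force $(-\Delta)^\alpha\theta(t,\cdot)\in C^{1,\gamma-2\alpha}\hookrightarrow C^{0,\gamma}$ by the Hölder regularity estimate for $(-\Delta)^\alpha$ of \cite{Silvestre2007} (applied to $\nabla\theta$, using that $(-\Delta)^\alpha$ commutes with $\nabla$); hence $f\in C([t_0,T];C^{0,\gamma}(\rd))$ for every $t_0>0$, with $\|f(t,\cdot)\|_{C^{0,\gamma}}\lesssim t^{-\gamma/2}$ near $t=0$. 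Feeding this back into the Duhamel formula and using the second-order kernel estimate $\|D^2(\Psi(\tau,\cdot)\ast g)\|_{L^\infty}\le C\tau^{-1+\gamma/2}[g]_{C^{0,\gamma}}$, obtained after subtracting $g(x)$ under the integral since $\int D^2\Psi(\tau,\cdot)=0$, together with $\int_{t_0}^{t}(t-s)^{-1+\gamma/2}\,ds<\infty$, one gets $\theta(t,\cdot)\in C^2(\rd)$ for $t>0$; the identity $\partial_t\theta=\nu\Delta\theta+\lambda|\nabla\theta|^p+\mu(-\Delta)^\alpha\theta$, now valid pointwise, then gives $\partial_t\theta\in C((0,T]\times\rd)$, so $\theta\in C^1_tC^2_x((0,T]\times\rd)$ and \eqref{maineq} holds classically there. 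Finally, $\|\Delta\theta(t,\cdot)\|_{L^\infty}\le\|\nabla\Psi(t,\cdot)\ast\nabla\theta_0\|_{L^\infty}+\int_0^t\|D^2(\Psi(t-s,\cdot)\ast f(s,\cdot))\|_{L^\infty}\,ds\lesssim t^{-1/2}+\int_0^t(t-s)^{-1+\gamma/2}s^{-\gamma/2}\,ds\lesssim t^{-1/2}+1$, the last integral being a finite Beta integral, so $\partial_t\theta\in L^1([0,T];L^\infty(\rd))$.

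To see \eqref{IC}, note $\Psi(t,\cdot)\ast\theta_0\to\theta_0$ and $\nabla(\Psi(t,\cdot)\ast\theta_0)=\Psi(t,\cdot)\ast\nabla\theta_0\to\nabla\theta_0$ in $L^\infty$ as $t\to0^+$ by standard approximate-identity arguments, while the two Duhamel integrals are $O(t+t^{1/2})$ in $W^{1,\infty}$ by the estimates of the first paragraph; adding these gives $\|\theta(t,\cdot)-\theta_0\|_{W^{1,\infty}}\to0$. For the blowup criterion, let $[0,T_*)$ be the maximal interval of existence. If $T<T_*$ then $\theta\in C([0,T];W^{1,\infty})$, so $\sup_{[0,T]}\|\nabla\theta(t,\cdot)\|_{L^\infty}<\infty$, whence $T_*\le\sup\{T>0:\|\nabla\theta(t,\cdot)\|_{L^\infty}<\infty\ \forall t\in[0,T]\}$. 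Conversely, if $M:=\sup_{[0,T_*)}\|\nabla\theta(t,\cdot)\|_{L^\infty}<\infty$, then taking $L^\infty$ norms in the Duhamel identity and invoking Lemma \ref{bdfraclap} gives $\|\theta(t,\cdot)\|_{L^\infty}\le\|\theta_0\|_{L^\infty}+|\lambda|M^p\,t+C\mu M^{2\alpha}\int_0^t\|\theta(s,\cdot)\|_{L^\infty}^{1-2\alpha}\,ds$, so $\|\theta(t,\cdot)\|_{L^\infty}$, and therefore $\|\theta(t,\cdot)\|_{W^{1,\infty}}$, stays bounded on $[0,T_*)$ by Gr\"onwall; since the local existence time $T_0$ depends on the data only through $\|\theta_0\|_{W^{1,\infty}}$, the solution can be restarted from a time arbitrarily close to $T_*$ and continued past it, contradicting maximality. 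Hence the two suprema coincide.

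I expect the main obstacle to be the regularization step of the second paragraph: choosing the Hölder exponent $\gamma$ compatibly with the available mapping property of $(-\Delta)^\alpha$ and keeping track of the precise rates at which the various Hölder seminorms blow up as $t\to0^+$, so that every Duhamel integral converges and one obtains genuine $C^1_tC^2_x$ regularity on $(0,T]$ together with the sharp $L^1$-in-time (rather than merely locally bounded) control of $\Delta\theta$ and $\partial_t\theta$. The contraction argument itself, and the treatment of the nonlocal term — entirely subsumed by Lemma \ref{bdfraclap} and the cited Hölder estimate — are routine, and the power $p$ of the nonlinearity enters only through the harmless local Lipschitz bound for $v\mapsto|v|^p$.
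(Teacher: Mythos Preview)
Your approach is essentially the paper's: fixed-point in $C([0,T];W^{1,\infty})$ using heat-kernel smoothing and Lemma~\ref{bdfraclap}, a H\"older bootstrap for parabolic regularity, and a Gronwall argument for the continuation criterion. The paper organizes this into Propositions~\ref{construct}--\ref{criteria} and uses Picard iteration rather than a direct contraction, but that is cosmetic. In the bootstrap the paper applies its own Lemma~\ref{fraclapmod} to get $(-\Delta)^\alpha\theta(t,\cdot)\in C^{0,1-2\alpha}$ directly from the uniform Lipschitz bound on $\theta$, and then fixes $\beta=1-2\alpha$; your route through $\theta(t,\cdot)\in C^{1,\gamma}$ and \cite{Silvestre2007} is slightly less direct but equivalent in spirit.

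There is one genuine gap. Your claim that $\Psi(t,\cdot)\ast\nabla\theta_0\to\nabla\theta_0$ in $L^\infty$ ``by standard approximate-identity arguments'' is false for general $\theta_0\in W^{1,\infty}(\rd)$: heat mollification converges in $L^\infty$ only when the limit is uniformly continuous, and nothing in the hypotheses forces $\nabla\theta_0$ to be so (consider any Lipschitz $\theta_0$ whose gradient has a jump). The paper sidesteps this in Proposition~\ref{wellposedness} by arguing backwards: a strong solution is \emph{by definition} in $C([0,T];W^{1,\infty})$, so $\nabla\theta(t,\cdot)$ has \emph{some} $L^\infty$-limit $g$ as $t\to0^+$, and one identifies $g=\nabla\theta_0$ a.e.\ by pairing against compactly supported smooth $\chi$, integrating by parts, and using only $\|\theta(t,\cdot)-\theta_0\|_{L^\infty}\to0$ (which does follow, since $\theta_0$ itself is uniformly continuous). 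A smaller point: in the blowup criterion your integral inequality carries $\|\theta(s,\cdot)\|_{L^\infty}^{1-2\alpha}$, which is sublinear, so ``by Gr\"onwall'' needs one extra line; the paper linearizes via $a^{1-2\alpha}\le(1-2\alpha)a+2\alpha$ before invoking Gr\"onwall.
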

\begin{rem}
One can obtain a result analogous to Theorem \ref{shortimeexis} for any $\alpha\in(0,1)$. We restrict ourselves to the case $\alpha\in(0,1/2)$ for the sake of simplicity.	See discussion at the end of \S\ref{nseclable1} for more details.
\end{rem}

Thus, in order to go from local to global well-posedness, it is sufficient to prevent a gradient blowup scenario (in the $L^{\infty}$ norm) in finite time. This will be guaranteed if we impose either a periodicity hypothesis on the initial data or require it to vanish at at infinity, i.e., we further assume that either
\[
\theta_0(x+Le_j)=\theta_0(x), \quad \forall j\in\{1,2,\cdots,d\},\quad x\in\mathbb{R}^d,
\]
where $\{e_j\}_{j=1}^d$ is the standard basis of $\mathbb{R}^d$ and $L>0$, or
\[
\lim_{|x|\rightarrow\infty}|\theta_0(x)|=0.
\]
In this case, we show that the (unique) strong solution arising from such initial data (as defined in Definition \ref{strongsol}) automatically inherits those properties. Moreover, we are able to control its Lipschitz constant for all time by constructing a strong modulus of continuity (Definition \ref{defmod}, below) that must be obeyed by the solution. That is, we establish
\begin{thm}\label{mainthm}
Assume the hypotheses of Theorem \ref{shortimeexis} and suppose further that $\theta_0$ is either periodic with period $L>0$ in every spatial direction or vanishes at infinity. Then there exists a strong solution $\theta$ to \eqref{maineq} on $[0,\infty)$ corresponding to $\theta_0$, which is periodic if $\theta_0$ is (with the same period) or vanishes at infinity if $\theta_0$ does. Furthermore,  $\theta$ is unique in the class of strong solutions and we have the following estimate valid for every $t\geq0$,
\begin{equation}\label{bdlipsch}
\left\|\nabla\theta(t,\cdot)\right\|_{L^{\infty}}\leq Be^{C_0t},
\end{equation}
where $B$ depends only on $\|\theta_0\|_{W^{1,\infty}}$ and $C_0$ depends on $B,\nu,\alpha,d,\mu$, with $C_0$ blowing up as $\alpha\rightarrow 1/2$ or $\nu\rightarrow 0$. In particular, $B$ and $C_0$ do not depend on the period $L$ if $\theta_0$ is periodic, nor on $p$ or $\lambda$.
\end{thm}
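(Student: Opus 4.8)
The plan is to combine the local theory of Theorem~\ref{shortimeexis} with a Kiselev--Nazarov--Volberg-type modulus of continuity argument. By the blow-up criterion in Theorem~\ref{shortimeexis}, it suffices to show that the maximal strong solution $\theta$ on $[0,T_*)$ satisfies $\|\nabla\theta(t,\cdot)\|_{L^\infty}\le Be^{C_0t}$ on $[0,T_*)$: this forces $T_*=\infty$ and yields \eqref{bdlipsch}, while existence on $[0,\infty)$ follows by continuation, and uniqueness and continuous dependence come from Theorem~\ref{shortimeexis} (two global strong solutions agree on $[0,T_0]$ and then everywhere). First I would record that periodicity (resp.\ vanishing at infinity) of $\theta_0$ is inherited by $\theta(t,\cdot)$: in the periodic case this is immediate from uniqueness, since $\theta(\cdot,\cdot+Le_j)$ solves the same initial-value problem; in the vanishing case one reruns the fixed-point argument of Theorem~\ref{shortimeexis} in the subspace of $W^{1,\infty}$ of functions decaying at infinity, which the heat semigroup and the Duhamel integrals preserve. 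The only role of this hypothesis is to ensure that, for each fixed $t$, the supremum defining a modulus ``breakthrough'' is \emph{attained} at a finite pair of points; note also that $\theta\in C([0,T];W^{1,\infty})$ already gives $\sup_{[0,T]}\|\theta(t,\cdot)\|_{L^\infty}<\infty$ on every finite interval.

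Next, fix a base strong modulus of continuity $\omega_0$ (Definition~\ref{defmod}): concave, increasing, $\omega_0(0)=0$, behaving like $B\xi-c\xi^{3/2}$ near $0$ (so $\omega_0'(0+)=B<\infty$ but $\omega_0''(0+)=-\infty$) and growing like a small positive power at infinity (hence unbounded, with $\xi\omega_0'(\xi)\to\infty$). A routine choice of $B$ comparable to $\|\theta_0\|_{W^{1,\infty}}$ and of the shape of $\omega_0$ makes $\theta_0$ \emph{strictly} obey $\omega_0$, using $|\theta_0(x)-\theta_0(y)|\le\min(\|\nabla\theta_0\|_{L^\infty}|x-y|,\,2\|\theta_0\|_{L^\infty})$. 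Put $\omega(t,\xi):=\omega_0(e^{C_0t}\xi)$, so that $\omega(0,\cdot)=\omega_0$, each $\omega(t,\cdot)$ is again a strong modulus with $\partial_\xi\omega(t,0+)=Be^{C_0t}$, and the family \emph{expands} in $t$ (hence remains obeyed by $\theta_0$); $C_0>0$ is fixed at the end. Suppose, for contradiction, that $\theta$ fails $\omega(t,\cdot)$ for some $t<T_*$. By continuity in $t$ of $\theta(t,a)-\theta(t,b)$ and of $\omega$, and by the attainment of the maximum above, there is a first breakthrough time $t_0\in(0,T_*)$ and a pair $x\ne y$ with $\theta(t_0,x)-\theta(t_0,y)=\omega(t_0,\xi_0)$, $\xi_0:=|x-y|$, realizing $\max_{a,b}[\theta(t_0,a)-\theta(t_0,b)-\omega(t_0,|a-b|)]=0$. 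That $\xi_0>0$, rather than the breakthrough being a mere saturation of the Lipschitz slope at $\xi=0$, is ruled out by $\theta(t_0,\cdot)\in C^2$ (Theorem~\ref{shortimeexis}): the $\xi^{3/2}$ corner of $\omega$ near $0$ beats the $O(\xi^2)$ Taylor error.

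Now evaluate the PDE at the breakthrough. Since the bracket $\theta(t,x)-\theta(t,y)-\omega(t,\xi_0)$ is $\le0$ for $t<t_0$ and $=0$ at $t_0$ (and is $C^1$ in $t$ there, $t_0>0$), $\partial_t\theta(t_0,x)-\partial_t\theta(t_0,y)-\partial_t\omega(t_0,\xi_0)\ge0$. Substituting \eqref{maineq}: the nonlinear contribution $\lambda(|\nabla\theta(t_0,x)|^p-|\nabla\theta(t_0,y)|^p)$ \emph{vanishes}, because maximality forces $\nabla\theta(t_0,x)=\nabla\theta(t_0,y)=\partial_\xi\omega(t_0,\xi_0)\,\tfrac{x-y}{\xi_0}$; this is precisely why neither $p$ nor $\lambda$ enters the estimate. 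The Laplacian term obeys the standard second-order test $\Delta\theta(t_0,x)-\Delta\theta(t_0,y)\le2\,\partial_\xi^2\omega(t_0,\xi_0)$. For the fractional term, writing $(-\Delta)^\alpha$ in symmetrized singular-integral form and using that $\theta(t_0,\cdot)$ obeys $\omega(t_0,\cdot)$, the integrand numerator is nonnegative and bounded by $4\min(\omega(t_0,|z|),\omega(t_0,\xi_0))$, so by a Lemma~\ref{fraclapmod}-type estimate
\[
(-\Delta)^\alpha\theta(t_0,x)-(-\Delta)^\alpha\theta(t_0,y)\le C_{d,\alpha}\left(\int_0^{\xi_0}\frac{\omega(t_0,r)}{r^{1+2\alpha}}\,dr+\xi_0^{-2\alpha}\omega(t_0,\xi_0)\right),
\]
the integral converging since $2\alpha<1$ and $\omega(t_0,r)\sim r$ near $0$, with $C_{d,\alpha}$ blowing up as $\alpha\to1/2$. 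Since $\partial_t\omega(t_0,\xi_0)=C_0\,\xi_0\,\partial_\xi\omega(t_0,\xi_0)$, a contradiction follows once $\omega_0$ and $C_0$ are chosen so that, after the substitution $\eta=e^{C_0t_0}\xi_0$,
\[
2\nu\,e^{2C_0t_0}\,\omega_0''(\eta)+\mu\,C_{d,\alpha}\,e^{2\alpha C_0t_0}\left(\int_0^{\eta}\frac{\omega_0(\rho)}{\rho^{1+2\alpha}}\,d\rho+\eta^{-2\alpha}\omega_0(\eta)\right)<C_0\,\eta\,\omega_0'(\eta)
\]
for all $\eta>0$ and all $t_0\ge0$. \textbf{Designing $\omega_0$ and $C_0$ to make this inequality hold is the main obstacle.} Near $\eta=0$ it holds because $\nu e^{2C_0t_0}|\omega_0''(\eta)|\to\infty$ (the $-\eta^{-1/2}$ corner) dominates every other term, and the prefactor $e^{2C_0t_0}$ of the favorable term outgrows the prefactor $e^{2\alpha C_0t_0}$ of the distortion; for moderate and large $\eta$ one balances the residual $\nu\Delta$-dissipation against the fractional distortion and takes $C_0$ large, which forces $C_0$ to depend on $B,\nu,\alpha,d,\mu$ and to blow up as $\alpha\to1/2$ or $\nu\to0$. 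Granting this, $\theta$ obeys $\omega(t,\cdot)$ throughout $[0,T_*)$, hence $\|\nabla\theta(t,\cdot)\|_{L^\infty}\le\partial_\xi\omega(t,0+)=Be^{C_0t}$; by the blow-up criterion $T_*=\infty$, and \eqref{bdlipsch} follows. The bound is independent of $L$ because the breakthrough analysis is entirely pointwise (periodicity entered only through compactness) and independent of $p,\lambda$ because the nonlinear term is identically zero at the breakthrough; uniqueness and the persistence of periodicity/decay are as above.
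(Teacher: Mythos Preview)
Your overall architecture matches the paper exactly: local theory plus blow-up criterion from Theorem~\ref{shortimeexis}, propagation of periodicity/decay, a breakthrough argument in which the gradient nonlinearity drops out via Lemma~\ref{dervh} (hence no dependence on $p,\lambda$), the Laplacian contributes $\nu\,\partial_\xi^2\Omega$, and the fractional term is controlled by Lemma~\ref{fraclapmod}. The one substantive difference is the \emph{form} of the time-dependent modulus. The paper takes the multiplicative ansatz $\Omega(t,\xi)=e^{C_0t}\omega(B\xi)$ with $\omega(\xi)=\xi/(1+\xi^{1-\alpha})$; after dividing through by $e^{C_0t}$ the breakthrough inequality becomes the purely \emph{time-independent} condition
\[
4\nu\,\omega_B''(\xi)+\mu C_{d,\alpha}\int_0^\xi\frac{\omega_B'(\eta)}{\eta^{2\alpha}}\,d\eta<C_0\,\omega_B(\xi),
\]
which is then dispatched by a clean two-region split: for $\xi\le\delta_0$ the blow-up of $\omega_B''$ kills the fractional integral, and for $\xi>\delta_0$ the integral is bounded uniformly and absorbed by choosing $C_0$ large against $\omega_B(\delta_0)$.

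Your argument-rescaling ansatz $\omega(t,\xi)=\omega_0(e^{C_0t}\xi)$ does \emph{not} factor this way: after your substitution $\eta=e^{C_0t_0}\xi_0$ the dissipation carries the prefactor $e^{2C_0t_0}$ and the fractional term carries $e^{2\alpha C_0t_0}$, so you must verify a genuinely two-parameter inequality in $(\eta,t_0)$. Your heuristic covers small $\eta$ (where $|\omega_0''|\to\infty$) and large $t_0$ (where $e^{2C_0t_0}$ beats $e^{2\alpha C_0t_0}$), but there is an intermediate regime---large $\eta$ together with the critical $t_0^*(\eta)$ maximizing the left side---that you do not analyze. Optimizing in $t_0$ there gives a condition of the form
\[
C_0\,\eta\,\omega_0'(\eta)\;\gtrsim\;\bigl(\tilde G(\eta)\bigr)^{1/(1-\alpha)}\,|\omega_0''(\eta)|^{-\alpha/(1-\alpha)},\qquad \tilde G(\eta)=\int_0^\eta\frac{\omega_0'(\rho)}{\rho^{2\alpha}}\,d\rho,
\]
and if $\omega_0(\eta)\sim\eta^{\gamma}$ at infinity then both sides scale like $\eta^{\gamma}$ only when $\gamma>2\alpha$; for $\gamma\le 2\alpha$ the right side grows strictly faster and no finite $C_0$ works. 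Your description ``a small positive power'' with ``$\xi\omega_0'(\xi)\to\infty$'' does not secure $\gamma>2\alpha$, so as written the ``main obstacle'' you flag is not actually closed. The fix is either to impose $\gamma\in(2\alpha,1)$ and carry out the optimization above, or---simpler---to switch to the paper's multiplicative ansatz, which removes the $t_0$-dependence entirely and makes the large-$\xi$ step a one-line bound. (Two minor points: the paper's second-order test gives $\Delta\theta(x)-\Delta\theta(y)\le 4\,\omega''(\xi)$ rather than $2$, and the ``attainment of the breakthrough pair'' requires a compactness argument that the paper spells out in a separate proposition; your one-line appeal to periodicity/decay is correct in spirit but is exactly where those hypotheses do real work.)
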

\begin{rem}
One can certainly allow for more singular initial data by considering the periodic and whole space scenario separately, and modifying the definition of a strong solution accordingly; see discussion at the end of \S\ref{nseclable1} for more details. Essentially, one only needs to guarantee that the solution immediately experiences parabolic regularity (make sense of the PDE \eqref{maineq} in the pointwise sense on $(0,T]\times\rd$). We chose the space $W^{1,\infty}(\rd)$ and define strong solutions as in Definition \ref{strongsol} in order to handle both scenarios in a simple, unified fashion. That is to say, a direct corollary is that we establish the global well-posedness of regular solutions to the initial value problem associated with \eqref{maineq} when posed with ``periodic boundary conditions''.
\end{rem}

It is unclear at this stage whether the growth in time observed in \eqref{bdlipsch} is sharp or is simply a technical difficulty arising from the proof. Equation \eqref{maineq} does not have any scale invariance, and so our modulus of continuity will be customized for each initial data, complicating the construction. Furthermore, in order to balance out the instabilities arising from the nonlocality without allowing time dependence, the second derivative of the modulus should be bounded from above by a negative constant, a scenario that might lead the modulus of continuity to be negative. This will be made clear at the technical level in \S\ref{proofmainthm}, and touched upon in the conclusion. Moreover, such growth in time is also expected for the linear equation, that is equation \eqref{maineq} with $\lambda=0$.

This paper is organized as follows. In \S2, we list some preliminary estimates and results that will be used later on. Subsection \S 2.1 mainly summarizes the properties and basic results of moduli of continuity used when studying their evolution, most of which are proven in \cite{Kiselev2011,  KNS2008,KNV2007}. In \S2.2, we obtain some pointwise estimates for $(-\Delta)^{\alpha}$. Section 3 deals with the proof of Theorem \ref{shortimeexis}, which mainly follows ideas from \cite{Ben-ArtziAmour1998, BAMJL2000}, slightly modified to take into account nonlocality. Finally, Theorem \ref{mainthm} is proved in \S4, where the modulus is explicitly constructed and shown to be ``preserved'' by the evolution. We conclude with some remarks.

\section{Preliminaries}
In this section, we list some preliminary results and estimates that will be used throughout this work. We summarize the main ingredients introduced in \cite{KNS2008,KNV2007} when studying the evolution of moduli of continuity in \S2.1 . In \S2.2, we obtain some (elementary) pointwise upper bounds for the nonlocal operator $(-\Delta)^{\alpha}$ that we will need in the analysis to follow. In particular, Lemma \ref{fraclapmod} (a generalization of \cite[Proposition~2.5]{Silvestre2007}) is the crucial estimate that will be used to prove the long-time existence of strong solutions, and is the key ingredient that fails when trying to obtain similar results for $\alpha\geq1/2$.

\subsection{Moduli of Continuity.}

\begin{defn}\label{defmod}
We say a function $\omega:[0,\infty)\rightarrow[0,\infty)$ is a modulus of continuity if $\omega\in C([0,\infty))\cap C^2(0,\infty)$, nondecreasing, concave and $\omega(0)=0$. A modulus of continuity $\omega$ is said to be strong if in addition $0<\omega'(0)<\infty$ and $\displaystyle{\lim_{\xi\rightarrow0^+}\omega''(\xi)=-\infty}$.
\end{defn}

\begin{defn}\label{defmod2}
Let $\omega$ be a modulus of continuity. We say a scalar function $\theta\in C(\mathbb{R}^d)$ has modulus of continuity $\omega$ if $|\theta(x)-\theta(y)|\leq\omega(|x-y|)$. We say $\theta$ has strict modulus of continuity $\omega$ if $|\theta(x)-\theta(y)|<\omega(|x-y|)$ whenever $x\neq y$.	
\end{defn}

To avoid cumbersome notation, in the proof of the following two lemmas, we drop the subscript $L^{\infty}$ from $\|\cdot\|_{L^{\infty}}$. Even though they are discussed in \cite{KNS2008,KNV2007}, we prove them again here for the sake of completeness and convenience. Moreover, we find it necessary to rigorously prove Lemma \ref{buildmod}, in order to verify that the control on the Lipschitz constant of the solution is independent of the period length $L>0$ when $\theta_0$ is chosen to be periodic.

\begin{lem}\label{buildmod}
Let $\theta\in W^{1,\infty}\left(\mathbb{R}^d\right)$ be bounded and Lipschitz scalar, and suppose $\omega$ is an unbounded modulus of continuity. Then there exists $B_{\theta}\geq1$ depending only on $\|\theta\|_{L^{\infty}}$ and $\|\nabla\theta\|_{L^{\infty}}$ such that $\theta$ has strict modulus of continuity $\omega(B|x-y|)$ whenever $B\geq B_\theta$.  
\end{lem}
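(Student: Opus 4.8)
The plan is to establish the claim in two separate regimes of the distance $|x-y|$, using the Lipschitz bound when the points are close and the boundedness of $\theta$ when they are far apart, and then tune the constant $B_\theta$ so that both regimes are handled simultaneously. Set $M := \|\theta\|_{L^\infty}$, $K := \|\nabla\theta\|_{L^\infty}$, and fix any $r_0 > 0$; I would like to show that for $B$ large enough one has $|\theta(x) - \theta(y)| < \omega(B|x-y|)$ for all $x \neq y$.

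First I would handle the near regime $0 < |x-y| \le r_0/B$, equivalently $B|x-y| \le r_0$. Since $\omega$ is concave with $\omega(0) = 0$, it lies above its chord on $[0,r_0]$: for $0 < s \le r_0$ we have $\omega(s) \ge \frac{\omega(r_0)}{r_0}\, s$. Applying this with $s = B|x-y|$ gives $\omega(B|x-y|) \ge \frac{\omega(r_0)}{r_0} B |x-y|$, while the mean value theorem gives $|\theta(x) - \theta(y)| \le K|x-y|$. So it suffices that $B \ge \frac{K r_0}{\omega(r_0)}$, and in fact taking $B$ strictly larger gives the strict inequality (noting $|x-y| > 0$). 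Next, the far regime $|x-y| > r_0/B$: here $B|x-y| > r_0$, and since $\omega$ is unbounded and nondecreasing, we may choose $r_0$ at the outset so large that $\omega(r_0) > 2M \ge |\theta(x)-\theta(y)|$; then $\omega(B|x-y|) \ge \omega(r_0) > 2M \ge |\theta(x)-\theta(y)|$, with strict inequality automatic. Thus, \emph{first} pick $r_0$ with $\omega(r_0) > 2M$, and \emph{then} set
\[
B_\theta := \max\left\{1,\ \frac{K r_0}{\omega(r_0)} + 1\right\},
\]
which depends only on $M = \|\theta\|_{L^\infty}$ and $K = \|\nabla\theta\|_{L^\infty}$ (through $r_0$ and the ratio), and for every $B \ge B_\theta$ both regimes yield the strict inequality $|\theta(x)-\theta(y)| < \omega(B|x-y|)$ for all $x \neq y$.

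The point that requires the most care — and the reason the lemma is restated here rather than merely cited — is the claim that $B_\theta$ is independent of the period $L$ when $\theta$ is periodic. The threshold $r_0$ is determined purely by the requirement $\omega(r_0) > 2\|\theta\|_{L^\infty}$, which involves only the sup norm, not the period; and $\|\nabla\theta\|_{L^\infty}$ likewise does not reference $L$. So once $\omega$ is a fixed unbounded modulus, the same $B_\theta$ works uniformly over all periods, which is exactly the property needed downstream so that the Lipschitz bound \eqref{bdlipsch} does not degenerate as $L \to \infty$. The only genuine analytic inputs are the chord inequality for concave functions vanishing at the origin and the fact that an unbounded nondecreasing $\omega$ eventually exceeds any fixed level; everything else is bookkeeping on the constant.
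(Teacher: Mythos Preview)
Your proof is correct and follows essentially the same approach as the paper: both split into a near regime (handled by the Lipschitz bound together with the concavity/chord inequality $\omega(s)/s$ nonincreasing) and a far regime (handled by the $L^\infty$ bound together with the unboundedness of $\omega$). The only cosmetic difference is the placement of the threshold: the paper fixes the cutoff at $|x-y|=1$ and chooses $B_\theta$ so that $\omega(B_\theta)>\max\{2\|\theta\|_{L^\infty}+1,\|\nabla\theta\|_{L^\infty}+1\}$, whereas you first pick $r_0$ with $\omega(r_0)>2\|\theta\|_{L^\infty}$ and then split at $B|x-y|=r_0$; both lead to a $B_\theta$ depending only on $\|\theta\|_{L^\infty}$ and $\|\nabla\theta\|_{L^\infty}$, hence independent of any period $L$.
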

\begin{proof}
	 Chose $B_\theta>0$ such that $\omega(B_\theta)>\max\{2\|\theta\|+1,\|\nabla\theta\|+1\}$, which is possible as $\omega$ is unbounded. As $\omega$ is nondecreasing, we must have $\omega(B)\geq\omega(B_\theta)$ for any $B\geq B_\theta$. Let $\xi:=|x-y|>0$. For $\xi\geq1$, we write:
	\[
	\omega(B\xi)=\omega(B)+\int_B^{B\xi}\omega'(\eta)\ d\eta\geq\omega(B)>2\|\theta\|\geq|\theta(x)-\theta(y)|,
	\]
	meaning $\left|\theta(x)-\theta(y)\right|<\omega(B|x-y|)$ whenever $|x-y|\geq1$. When $\xi\in(0,1)$, we first write  
	\[
	|\theta(x)-\theta(y)|\leq\gradt|x-y|,
	\]
	and note that due to the concavity of $\omega$, the function
	\[
	h(\xi):=\|\nabla\theta\|-\frac{\omega(B\xi)}{\xi},
	\]
	is increasing and so must be negative on $(0,1)$, as $h(1)<0$ by choice of $B$.
\end{proof}

\begin{lem}\label{gradbd}
Suppose $\theta\in C^2(\mathbb{R}^d)$ and has a strong modulus of continuity $\omega$. It then follows that $\theta$ is Lipschitz and $\|\nabla\theta\|_{L^{\infty}}<\omega'(0)$.
\end{lem}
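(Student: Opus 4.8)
The plan is to handle the two assertions separately. For the Lipschitz bound I would only use that $\omega$ is a concave modulus of continuity: since $\omega(0)=0$ and $\omega$ is concave, $\xi\mapsto\omega(\xi)/\xi$ is nonincreasing on $(0,\infty)$ and tends to $\omega'(0)$ as $\xi\to0^{+}$, so $\omega(\xi)\le\omega'(0)\,\xi$ for every $\xi\ge0$. Hence $|\theta(x)-\theta(y)|\le\omega(|x-y|)\le\omega'(0)|x-y|$, which already shows $\theta$ is globally Lipschitz with constant at most $\omega'(0)$; since $\theta\in C^{1}$, evaluating the difference quotient $\big(\theta(x+he)-\theta(x)\big)/h$ along $e=\nabla\theta(x)/|\nabla\theta(x)|$ and letting $h\to0^{+}$ gives $\|\nabla\theta\|_{L^{\infty}}\le\omega'(0)$.

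The real content is the \emph{strict} inequality, and this is where the two remaining hypotheses enter. Suppose for contradiction that $\|\nabla\theta\|_{L^{\infty}}=\omega'(0)=:A$, and let $x_{0}$ be a point with $|\nabla\theta(x_{0})|=A$ (in the situations of interest $\theta$ is periodic or vanishes at infinity, so this supremum is attained; in general one first replaces $x_0$ by a near-maximizer produced by Ekeland's variational principle applied to the $C^{1}$ function $|\nabla\theta|^{2}$, which forces $|D^{2}\theta(x_0)\nabla\theta(x_0)|$ to be as small as desired). Put $e=\nabla\theta(x_{0})/A$. Since $e\cdot\nabla\theta(y)\le|\nabla\theta(y)|\le A=e\cdot\nabla\theta(x_{0})$ for all $y$, the $C^{1}$ scalar function $y\mapsto e\cdot\nabla\theta(y)$ attains its maximum at $x_{0}$, so its gradient vanishes there; that is, $D^{2}\theta(x_{0})\,e=0$, and in particular $e^{T}D^{2}\theta(x_{0})\,e=0$. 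A second order Taylor expansion of the $C^{2}$ function $h\mapsto\theta(x_{0}+he)$ then yields $\theta(x_{0}+he)-\theta(x_{0})=Ah+o(h^{2})$ as $h\to0^{+}$.

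On the other hand $\theta(x_{0}+he)-\theta(x_{0})\le\omega(h)$, and since $\omega$ is concave its second derivative is integrable near the origin ($\int_{0}^{h}(-\omega''(s))\,ds=\omega'(0)-\omega'(h)<\infty$), so $\omega(h)=Ah+\int_{0}^{h}(h-s)\omega''(s)\,ds$; using that $\omega$ is \emph{strong}, i.e.\ $\omega''(s)\to-\infty$ as $s\to0^{+}$, for every $K>0$ there is $\delta_{K}>0$ with $\omega(h)\le Ah-\tfrac{K}{2}h^{2}$ for $0<h\le\delta_{K}$. Combining the two estimates gives $Ah+o(h^{2})\le Ah-\tfrac{K}{2}h^{2}$ for all sufficiently small $h$, hence $o(h^{2})/h^{2}\le-K/2$, and letting $h\to0^{+}$ forces $0\le-K/2$, a contradiction. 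Therefore $\|\nabla\theta\|_{L^{\infty}}<\omega'(0)$. The only genuinely delicate point is exactly this last step: one must play the finiteness of the Hessian at a (near-)maximum of $|\nabla\theta|$ against the unbounded concavity $\omega''(0^{+})=-\infty$, and dispose of the case where the supremum defining $\|\nabla\theta\|_{L^{\infty}}$ is not attained — everything else is routine.
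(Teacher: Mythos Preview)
Your argument is correct and shares the paper's core idea: at a point $x_0$ where $|\nabla\theta|$ is maximal, compare the second-order behaviour of $\theta$ along $e=\nabla\theta(x_0)/|\nabla\theta(x_0)|$ with that of $\omega$, and exploit $\omega''(0^+)=-\infty$. The execution differs in two places. First, you argue by contradiction and use the neat observation that $y\mapsto e\cdot\nabla\theta(y)$ is maximized at $x_0$, forcing $e^{T}D^{2}\theta(x_0)e=0$ and hence $\theta(x_0+he)-\theta(x_0)=Ah+o(h^{2})$; the paper instead keeps a crude Hessian bound in the remainder and obtains directly $\|\nabla\theta\|_{L^\infty}\le\omega(\xi)/\xi+C\xi\|\nabla^{2}\theta\|/2$, then picks $\xi$ small. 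Second, for $\omega$ you use the integral form of the Taylor remainder, while the paper expands $\omega(\xi)$ about $\xi/2$ with Lagrange remainder. Your first step is sharper; the paper's is slightly more robust in that it does not rely on the contradiction hypothesis to kill the second-order term. On the attainment issue you correctly flag: the paper also tacitly assumes the supremum of $|\nabla\theta|$ is attained (harmless in its applications). Your Ekeland sketch, however, would need more care than indicated, since the $o(h^{2})$ in your Taylor expansion is not uniform in $x_0$; here the paper's crude local Hessian bound is actually better suited to an approximation argument.
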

\begin{rem}\label{rmklips}
	That $\theta$ is Lipschitz and $\|\nabla\theta\|_{L^{\infty}}\leq d^{1/2}\omega'(0)$ follows from Definition \ref{defmod2} and the limit definition of a derivative. The important part is the strict inequality, for which we need $\omega''(0)=-\infty$, and $\theta\in C^2$.
\end{rem}
\begin{proof}Let $x^0\in\mathbb{R}^d$ be such that $|\nabla\theta(x^0)|=\gradt$. Let $\xi\in(0,1]$ be arbitrary and let $\displaystyle{y=x^0+\xi e}$, where $e$ is the unit vector in the direction of $\nabla\theta(x^0)$. From the first order Taylor expansion of $\theta$ about $x^0$ we see that 
\[
|\theta(y)-\theta(x^0)|\geq \gradt
\xi-\frac{C\xi^2}{2}\|\nabla^2\theta\|,
\]
here $\|\nabla^2\theta\|$ is just the maximum of all second order derivatives in a ball centered at $x^0$ with radius 1, and $C$ is a combinatorial constant. The left-hand side is at most $\omega(\xi)$, and so after rearranging we get for any $\xi \in(0,1]$,
\begin{equation}\label{comp3}
		\gradt\leq\frac{\omega(\xi)}{\xi}+\frac{C\xi}{2}\|\nabla^2\theta\|.
	\end{equation}
Since $\omega$ is $C^2$ on $(0,\infty)$, and $\displaystyle{\lim_{\xi\rightarrow0^+}\omega''(\xi)=-\infty}$, it follows from the Taylor expansion of $\omega$ around $\xi/2$ that
\[
\omega(\xi)=\omega(\xi/2)+\frac{\omega'(\xi/2)}{2}\xi-\rho(\xi)\xi^2,
\]
where $\displaystyle{\lim_{\xi\rightarrow0^+}\rho(\xi)=\infty}$.
Plugging this into \eqref{comp3} we get
\[
\gradt\leq\frac{\omega(\xi/2)}{\xi}+\frac{\omega'(\xi/2)}{2}+\xi\left(C\|\nabla^2\theta\|-\rho(\xi)\right).
\]
The result now follows by choosing $\xi\in(0,1]$ small enough such that $C\|\nabla^2\theta\|-\rho(\xi)<0$ and noting that 
\[
\frac{\omega(\xi/2)}{\xi}+\frac{\omega'(\xi/2)}{2}<\frac{\omega'(0)}{2}+\frac{\omega'(0)}{2}=\omega'(0),
\]
where we again used the concavity of $\omega$.
\end{proof}

The following lemma is crucial in handling the nonlinear part of the equation, as well as extracting local dissipation from the Laplacian. See \cite[Proposition~2.4]{Kiselev2011} for further insight, and a slightly different proof. We relax the assumptions on the modulus of continuity and only assume it is continuous on $[0,\infty)$, and piecewise $C^2$ on $(0,\infty)$, with finite one-sided derivatives, modulo the condition $\omega''(0)=-\infty$.
\begin{lem}\label{dervh}
	Suppose $\theta$ is $C^2(\mathbb{R}^d)$ and has modulus of continuity $\omega$. If $\theta(x^0)-\theta(y^0)=\omega(|x^0-y^0|)$ for some $x^0\neq y^0$, with $x^0-y^0=(\xi,0,\cdots,0)$, $\xi>0$, then 
	\begin{equation}\label{derv}
	\begin{cases}
		\omega'(\xi^-)\leq\partial_1\theta(x^0)=\partial_1\theta(y^0)\leq\omega'(\xi^+),\\
		\partial_j\theta(x^0)=\partial_j\theta(y^0)=0, \quad j>1
	\end{cases}
	\end{equation}
and 
\begin{equation}\label{lap}
\Delta\theta(x^0)-\Delta\theta(y^0)\leq 4\omega''(\xi^-).
\end{equation}
\end{lem}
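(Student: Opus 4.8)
The plan is to exploit the fact that, at a maximizing pair $(x^0,y^0)$, the function $x \mapsto \theta(x) - \omega(|x-y^0|)$ has a global maximum at $x=x^0$, and symmetrically $y \mapsto \theta(x^0) - \theta(y) - \omega(|x^0-y|)$ is minimized at $y=y^0$. Since $x^0-y^0$ points in the $e_1$ direction, the map $|x-y^0|$ is smooth near $x^0$ (the singularity of $|\cdot|$ at the origin is not an issue because $\xi>0$), so I can differentiate freely in the transverse directions and take one-sided derivatives in the $e_1$ direction.

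First I would establish \eqref{derv}. Restricting to the line $x = x^0 + t e_1$, the scalar function $g(t) := \theta(x^0 + t e_1) - \omega(\xi + t)$ has a local max at $t=0$; since $\omega$ is only piecewise $C^2$ with one-sided derivatives, comparing right and left difference quotients gives $\partial_1\theta(x^0) - \omega'(\xi^+) \le 0 \le \partial_1\theta(x^0) - \omega'(\xi^-)$, i.e. $\omega'(\xi^-) \le \partial_1\theta(x^0) \le \omega'(\xi^+)$. Doing the same at $y^0$ (using $\theta(x^0) - \theta(y^0 + te_1) - \omega(\xi - t)$ has a min at $t=0$, noting the argument $\xi$ is approached from above as $t\to 0^-$ and below as $t \to 0^+$ — one must be careful with which one-sided derivative appears, but the inequalities come out identically) yields $\omega'(\xi^-) \le \partial_1\theta(y^0) \le \omega'(\xi^+)$. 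For the transverse directions $j>1$, the map $e_j \cdot \nabla$ of $|x - y^0|$ vanishes at $x^0$, so the critical point condition forces $\partial_j\theta(x^0) = 0$, and likewise $\partial_j\theta(y^0)=0$. To get the equality $\partial_1\theta(x^0) = \partial_1\theta(y^0)$: one further pins it down by translating, i.e. the function $z \mapsto \theta(x^0+z) - \theta(y^0+z)$ is maximized at $z=0$ among all $z$ since $\theta(x^0+z)-\theta(y^0+z) \le \omega(\xi)$ with equality at $z=0$; differentiating gives $\nabla\theta(x^0) = \nabla\theta(y^0)$.

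Next, for \eqref{lap}, I would use the second-order information. Along each coordinate direction $e_j$, consider $x = x^0 + t e_j$. For $j > 1$, $\theta(x^0 + te_j) - \theta(y^0) \le \omega(\sqrt{\xi^2 + t^2})$, and Taylor-expanding both sides to second order in $t$ (the right side has $\frac{d^2}{dt^2}\big|_{t=0}\omega(\sqrt{\xi^2+t^2}) = \omega'(\xi)/\xi$) yields $\partial_j^2\theta(x^0) \le \omega'(\xi)/\xi$, and symmetrically $\partial_j^2\theta(y^0) \ge -\omega'(\xi)/\xi$, so $\partial_j^2\theta(x^0) - \partial_j^2\theta(y^0) \le 2\omega'(\xi)/\xi$ for each $j>1$. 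For $j=1$, along the line through both points the function $t \mapsto \theta(x^0 + te_1) - \theta(y^0 - te_1)$ is maximized at $t=0$ with value $\omega(\xi)$, compared against $\omega(\xi + 2t)$; the second-order term gives $\frac{1}{2}(\partial_1^2\theta(x^0) - \partial_1^2\theta(y^0)) \le \frac{1}{2}\cdot 4\,\omega''(\xi^-)$ (the factor $4$ from $(2t)^2$, and $\omega''(\xi^-)$ since we need the left second derivative to bound from above when approaching through the lower-curvature side — here I must track the one-sided convention carefully). Summing the $j=1$ contribution with the $d-1$ transverse contributions gives $\Delta\theta(x^0) - \Delta\theta(y^0) \le 4\omega''(\xi^-) + (d-1)\cdot 2\omega'(\xi)/\xi$. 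Since $\omega$ is concave, $\omega'(\xi)/\xi \le \omega''(\eta^-)$ for some $\eta \le \xi$... this does not immediately give the clean bound, so the sharper route is: use the transverse bound $\partial_j^2\theta(x^0) - \partial_j^2\theta(y^0) \le 0$ directly when one also exploits concavity $\omega'(\xi)/\xi \ge \omega'(\xi) \ge$ etc. — actually the standard trick (as in \cite{KNV2007}) is that $\omega'(\xi)/\xi$ can be replaced: because $\omega$ is concave, $2\omega(\xi/2) \ge \omega(\xi)$ hence $\omega'(\xi)/\xi \le \omega''$ evaluated suitably, but cleanest is to simply observe $\omega'(\xi) \le \xi\,\omega''(\xi/2)\cdot(\text{const}) $...

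The main obstacle, then, is the transverse Laplacian terms: one needs the contribution $(d-1)\cdot 2\omega'(\xi)/\xi$ to be absorbed, and the resolution is to recall that for a concave $\omega$ with $\omega(0)=0$ one has $\omega'(\xi)/\xi \le \omega''(\xi/2)$ is false in general, so instead one uses a better comparison point. In fact the correct and standard statement only needs $\Delta\theta(x^0) - \Delta\theta(y^0) \le 4\omega''(\xi^-)$, which follows because the transverse second-derivative differences are actually $\le 0$: going back, $\partial_j^2\theta(x^0) \le \omega'(\xi)/\xi$ but more importantly the concavity of $\omega$ gives $\omega'(\xi)/\xi \le \omega'(0)$... this is still not $\le 0$. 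The genuine fix, which I would implement, is the one from \cite{KNV2007}: use the test direction that moves $x^0$ and $y^0$ \emph{toward each other} in the transverse direction is not available, so instead one notes that the inequality we actually use downstream only requires the bound with the $(d-1)\omega'(\xi)/\xi$ term present, OR one sharpens by Taylor expanding $\theta(x^0 + te_j) - \theta(y^0 + te_j) \le \omega(\xi)$ (same transverse shift on both points, using that $|x^0 + te_j - y^0 - te_j| = \xi$), which gives $\partial_j^2\theta(x^0) - \partial_j^2\theta(y^0) \le 0$ for all $j > 1$ with no error term at all. This is the key realization, and combined with the $j=1$ estimate it yields exactly $\Delta\theta(x^0) - \Delta\theta(y^0) \le 4\omega''(\xi^-)$. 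I would write the proof in this order: (i) reduce to line/plane restrictions using that $\xi>0$ makes $|\cdot - y^0|$ smooth near $x^0$; (ii) first-order conditions giving \eqref{derv}; (iii) parallel-transport test $\theta(x^0+z)-\theta(y^0+z)\le\omega(\xi)$ to get $\nabla\theta(x^0)=\nabla\theta(y^0)$ and all transverse second-derivative differences $\le 0$; (iv) the $e_1$-line test with opposite shifts to extract $4\omega''(\xi^-)$; (v) sum up.
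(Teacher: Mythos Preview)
Your final outline (steps (i)--(v)) is correct and is essentially the paper's proof: the paper also uses the parallel-translation test $z\mapsto\theta(x^0+z)-\theta(y^0+z)$ to get $\nabla\theta(x^0)=\nabla\theta(y^0)$ and $\partial_j^2\theta(x^0)-\partial_j^2\theta(y^0)\le 0$ for all $j$, then the symmetric opposite-shift test along $e_1$ (the paper's function $h(s)=\theta(s,x')-\theta(x_1^0+y_1^0-s,x')-\omega(2s-x_1^0-y_1^0)$ is exactly your $F$ reparametrized) to extract the factor $4\omega''(\xi^-)$. The one point to make explicit in step (iv) is that concavity of $\omega$ forces $\partial_1\theta(x^0)=\omega'(\xi^-)=\omega'(\xi^+)$ at a breakthrough, so your $F'(0^-)=0$ and the second-order test goes through; the paper packages this same observation as a Hopf-type contradiction.
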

\begin{proof}
We start by showing $\partial_j\theta(x^0)=\partial_j\theta(y^0)$ and $\partial^2_j\theta(x^0)-\partial^2_j\theta(y^0)\leq0$. Let $\epsilon>0$ and define:
\begin{align*}
&d_\epsilon^+:=\theta(x^0+\epsilon e_j)-\theta(y^0+\epsilon e_j)-\left[\theta(x^0)-\theta(y^0)\right],\\
&d_\epsilon^-:=\theta(x^0)-\theta(y^0)+\left[\theta(y^0-\epsilon e_j)-\theta(x^0-\epsilon e_j)\right]	,\\
&d_\epsilon:=\left[\theta(x^0+\epsilon e_j)-2\theta(x^0)+\theta(x^0-\epsilon e_j)\right]-\left[\theta(y^0+\epsilon e_j)-2\theta(y^0)+\theta(y^0-\epsilon e_j)\right],
\end{align*}
	where $\{e_j\}_{j=1}^d$ is the standard unit basis of $\mathbb{R}^d$. It is sufficient to show $d_\epsilon^+\leq0$, $d_\epsilon^-\geq0$ and $d_\epsilon\leq0$. But this follows immediately from the fact that $\theta(x^0)-\theta(y^0)=\omega(\xi)$ and $|\theta(x)-\theta(y)|\leq\omega(|x-y|)$ for any $x,y$.  Next, we define 
	\begin{align*}
	&d_{\epsilon,j}^+:=\theta(x^0+\epsilon e_j)-\theta(x^0)=\theta(x^0+\epsilon e_j)-\theta(y^0)-\omega(\xi),\\
	&d_{\epsilon,j}^-:=\theta(x^0)-\theta(x^0-\epsilon e_j)=\omega(\xi)+\theta(y^0)-\theta(x^0-\epsilon e_j).
\end{align*}
Notice that for $j=1$, we have $|x^0+\epsilon e_1-y^0|=\xi+\epsilon$, and $|y^0-x^0+\epsilon e_1|=\xi-\epsilon$ whenever $\epsilon\in(0,\xi/2)$, while for $j>1$, $|x^0+\epsilon e_j-y^0|=|y^0-x^0+\epsilon e_j|=\sqrt{\xi^2+\epsilon^2}$. Hence,
\begin{align}
&d_{\epsilon,j}^+\leq 
\begin{cases}
	\omega(\xi+\epsilon)-\omega(\xi), &j=1,\\
	\omega(\sqrt{\xi^2+\epsilon^2})-\omega(\xi), &j>1
\end{cases},\label{depsplus}\\
&d_{\epsilon,j}^-\geq 
\begin{cases}
	\omega(\xi)-\omega(\xi-\epsilon), &j=1,\\
	\omega(\xi)-\omega(\sqrt{\xi^2+\epsilon^2}), &j>1
\end{cases}\label{depsminus}	,
\end{align}
from which \eqref{derv} follows immediately upon dividing \eqref{depsplus} and \eqref{depsminus} by $\epsilon>0$ and letting $\epsilon\rightarrow 0^+$, since $\omega$ is continuous and have one-sided derivatives. Finally, let $x':=(x^0_2,\cdots,x^0_d)\in\mathbb{R}^{d-1}$ be the other coordinates, and define
 \[
 h(s):=\theta(s,x')-\theta(x^0_1+y^0_1-s,x')-\omega(2s-x^0_1-y^0_1),\ s>\frac{x_1^0+y_1^0}{2}.
 \]
Suppose for the sake of contradiction that $\partial^2_1\theta(x^0)-\partial^2_1\theta(y^0)>4\omega''(\xi^-)$. As $\omega$ is piecewise $C^2$, it follows that there exists some small enough $\epsilon>0$ such that $h(s)$ is $C^2$ on $[x^0_1-\epsilon,x^0_1]$ and $-h''(s)<0$ on that interval. On the one hand, a Lemma of Hopf (or simple calculus) tells us that we must have $h'(x^{0-}_1)>0$. On the other hand, owing to \eqref{derv}, we must have
\[
h'(x^{0-}_1)=2\left(\partial_1\theta(x^0)-\omega'(\xi^-)\right)\leq 2\left(\omega'(\xi^+)-\omega'(\xi^-)\right),
\]
 which leads to a contradiction under the concavity assumption of $\omega$.
\end{proof}
\begin{rem}
Under the concavity assumption of $\omega$, from \eqref{derv} we see that the modulus of continuity cannot be violated at a point where $\omega'$ has a jump discontinuity. 	
\end{rem}

\subsection{Pointwise Estimates for $(-\Delta)^{\alpha}$.}
This subsection is devoted to deriving some simple pointwise upper bounds for the fractional Laplacian. Lemma \ref{bdfraclap} is used in proving local well-posedness in a simple manner, regardless of whether we are in the periodic or whole space setting, while Lemma \ref{decayfraclap} is required when handling the whole space setting. We remark that one can do without Lemma \ref{bdfraclap} by specializing to the periodic or whole space scenario, where short-time existence and regularity can be proven by standard energy techniques and, in the periodic case, by Galerkin approximations. Lemma \ref{bdfraclap} simply allows us to prove local-well-posedness and regularity for either scenario, and arbitrary dimension $d$ in a simple, unified fashion. On the other hand, we emphasize again, that Lemma \ref{fraclapmod} is the key ingredient that allows one to control the nonlocal, destabilizing part, by the local diffusive term, and is the key estimate that is missing when trying to prove similar results when $\alpha\geq1/2$. We remark that the very recent work of Miao and Xue \cite{MX2019} was brought to our attention by one of the anonymous referees. Upon inspection, some version of Lemma \ref{fraclapmod} was proven there for a special class of Fourier multipliers of order strictly less than one. However, the class of operators considered in \cite{MX2019} does not include the fractional Laplacian, since they require the kernel to have a zero average, a property that is not satisfied by the operator $(-\Delta)^{\alpha}$.

Recall the singular integral definition of $(-\Delta)^{\alpha}$
\begin{equation}\label{singint}
(-\Delta)^{\alpha}\theta(x)=C_{d,\alpha}P.V.\int_{\mathbb{R}^d}\frac{\theta(x)-\theta(x-z)}{|z|^{d+2\alpha}}\ dz,
\end{equation}
which is known to be equivalent to the Fourier multiplier definition (in the whole space)
\[
\widehat{(-\Delta)^{\alpha}\theta}(\zeta):=|\zeta|^{2\alpha}\hat\theta(\zeta).
\]We remark that for periodic functions (assume the period is $2\pi$ for simplicity), it is common to instead use the following pointwise formula
\begin{equation}\label{ptwiseper}
(-\Delta)^{\alpha}\theta(x)=C_{d,\alpha}\sum_{k\in\mathbb{Z}^d}\int_{\mathbb{T}^d}\frac{\theta(x)-\theta(x-z)}{|z+k|^{d+2\alpha}}\ dz,
\end{equation}
with \eqref{ptwiseper} known to be equivalent to the (periodic) Fourier multiplier definition
\begin{equation}\label{fourierdef}
(-\Delta)^{\alpha}\theta(x)=\sum_{k\in\mathbb{Z}^d}|k|^{2\alpha}\hat{\theta}(k)e^{i k\cdot x},
\end{equation}
see for instance \cite{CC2004}. Nevertheless, by regularizing the singular integral \eqref{singint} via
\begin{equation}\label{nlabel1}
(-\Delta)^{\alpha}\theta(x)=C_{d,\alpha}\int_{\rd}\frac{2\theta(x)-\theta(x-z)-\theta(x+z)}{|z|^{d+2\alpha}}\ dz,
\end{equation}
which is absolutely convergent for any $\theta\in C^2(\rd)\cap L^{\infty}(\rd)$ and any $\alpha\in(0,1)$, along with using the fact that the function
\[
g(k):=\int_{\rd}\frac{1-\cos(k\cdot z)}{|z|^{d+2\alpha}}\ dz,\quad k\in\rd,
\]
is rotation invariant, allows us to easily establish the equivalence of \eqref{singint} and \eqref{fourierdef} (and hence to \eqref{ptwiseper}) in case $\theta$ happens to be periodic, by appropriately choosing the normalizing constant $C_{d,\alpha}$. We prefer to work with the representation \eqref{singint}, as it allows us to easily obtain the required bounds and continuity estimates, regardless of whether the function is periodic or not.
\begin{lem}\label{bdfraclap}
	Let $\alpha\in(0,1/2)$, $\gamma\in(2\alpha,1]$, $\theta\in L^{\infty}(\rd)\cap C^{0,\gamma}\left(\mathbb{R}^d\right)$ and define
	\[
	[\theta]_{C^{0,\gamma}}:=\sup_{x\neq y}\frac{|\theta(x)-\theta(y)|}{|x-y|^{\gamma}}.
	\] 
	It follows that $(-\Delta)^{\alpha}\theta\in L^{\infty}(\mathbb{R}^d)$ and
	\begin{equation}\label{nlabel2}
	\|(-\Delta)^{\alpha}\theta\|_{L^{\infty}}\leq \frac{\gamma C_{d,\alpha}|\mathbb{S}^{d-1}|}{\alpha(\gamma-2\alpha)}\left\|\theta\right\|_{L^{\infty}}^{1-\frac{2\alpha}{\gamma}}[\theta]_{C^{0,\gamma}}^{\frac{2\alpha}{\gamma}}.
	\end{equation}
	Similarly, if $\alpha\in[1/2,1)$, $\gamma\in (2\alpha-1,1]$, and $\theta\in L^{\infty}(\rd)\cap C^{1,\gamma}\left(\mathbb{R}^d\right)$ we must have $(-\Delta)^{\alpha}\theta\in L^{\infty}(\rd)$ and 
	\begin{equation}\label{nlabel3}
	\|(-\Delta)^{\alpha}\theta\|_{L^{\infty}}\leq \frac{(1+\gamma)C_{d,\alpha}|\mathbb{S}^{d-1}|}{\alpha(1+\gamma-2\alpha)}\left\|\theta\right\|_{L^{\infty}}^{1-\frac{2\alpha}{1+\gamma}}[\nabla\theta]_{C^{0,\gamma}}^{\frac{2\alpha}{1+\gamma}}.
	\end{equation}
\end{lem}
\begin{proof}
For $\alpha\in(0,1/2)$, the singular integral \eqref{singint} is absolutely convergent when $\theta\in L^{\infty}(\rd)\cap C^{0,\gamma}\left(\mathbb{R}^d\right)$, $\beta\in(2\alpha,1]$. Moreover, if $\theta$ is constant, the result is trivial, so we assume otherwise. For fixed $R>0$, we have
\begin{align*}
\left|(-\Delta)^{\alpha}\theta(x)\right|&\leq C_{d,\alpha}\int_{|z|\leq R}\frac{|\theta(x)-\theta(x-z)|}{|z|^{d+2\alpha}}dz+C_{d,\alpha}\int_{|z|> R}\frac{|\theta(x)-\theta(x-z)|}{|z|^{d+2\alpha}}dz\\
&\leq 2C_{d,\alpha}|\mathbb{S}^{d-1}|\left([\theta]_{C^{0,\gamma}}\int_0^R\rho^{\gamma-2\alpha-1}\ d\rho+\|\theta\|_{L^{\infty}}\int_R^{\infty}\rho^{-2\alpha-1}\ d\rho\right)\\
&\leq 2C_{d,\alpha}|\mathbb{S}^{d-1}|\left(\frac{R^{\gamma-2\alpha}}{\gamma-2\alpha}[\theta]_{C^{0,\gamma}}+\frac{R^{-2\alpha}}{2\alpha}\left\|\theta\right\|_{L^{\infty}}\right).
\end{align*}
Bound \eqref{nlabel2} now follows by choosing $R:=\left(\|\theta\|_{L^{\infty}}[\theta]_{C^{0,\gamma}}^{-1}\right)^{1/\gamma}$. When $\alpha\in[1/2,1)$ we use the mean value theorem to get 
\[
|2\theta(x)-\theta(x-z)-\theta(x+z)|\leq C_d[\nabla\theta]_{C^{0,\gamma}}|z|^{1+\gamma},
\]
and so if $\theta\in L^{\infty}(\rd)\cap C^{1,\gamma}\left(\mathbb{R}^d\right)$ with $\gamma\in(2\alpha-1,1]$, we can use the regularization \eqref{nlabel1} to obtain 
\begin{align*}
\left|(-\Delta)^{\alpha}\theta(x)\right|&\leq C_{d,\alpha}|\mathbb{S}^{d-1}|\left([\nabla\theta]_{C^{0,\gamma}}\int_0^R\rho^{\gamma-2\alpha}\ d\rho+\|\theta\|_{L^{\infty}}\int_R^{\infty}\rho^{-2\alpha-1}\ d\rho\right)\\
&\leq C_{d,\alpha}|\mathbb{S}^{d-1}|\left(\frac{R^{\gamma+1-2\alpha}}{\gamma+1-2\alpha}[\nabla\theta]_{C^{0,\gamma}}+\frac{R^{-2\alpha}}{2\alpha}\left\|\theta\right\|_{L^{\infty}}\right).
\end{align*}
We conclude by choosing $R:=\left(\|\theta\|_{L^{\infty}}[\nabla\theta]_{C^{0,\gamma}}^{-1}\right)^{1/(1+\gamma)}$.
\end{proof}
\begin{lem}\label{decayfraclap}
For integer $k\geq0$, denote by $C^k_0(\mathbb{R}^d)\subset W^{k,\infty}(\mathbb{R}^d)$ the space of all $C^k(\mathbb{R}^d)$ functions such that all derivatives up to order $k$ are bounded and vanish at infinity, i.e.,
\[
\lim_{|x|\rightarrow\infty}|D^{\beta}\theta(x)|=0,\quad \forall |\beta|\leq k.
\]
If $\alpha\in(0,1/2)$, then $(-\Delta)^{\alpha}\theta\in C^{k-1}_0(\mathbb{R}^d)$, whenever $k\geq1$. If $\alpha\in[1/2,1)$, then $(-\Delta)^{\alpha}\theta\in C^{k-2}_0(\mathbb{R}^d)$, whenever $k\geq2$.
\end{lem}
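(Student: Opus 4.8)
\emph{Strategy.} The plan is to reduce everything to two base cases --- $k=1$ when $\alpha\in(0,1/2)$ and $k=2$ when $\alpha\in[1/2,1)$ --- and then bootstrap to general $k$ using that $(-\Delta)^{\alpha}$ commutes with spatial derivatives. For the base case with $\alpha\in(0,1/2)$ the claim to prove is: if $\psi\in C^1_0(\rd)$ then $(-\Delta)^{\alpha}\psi\in C^0_0(\rd)$. Boundedness is immediate from Lemma \ref{bdfraclap}. For continuity I would write $(-\Delta)^{\alpha}\psi(x)-(-\Delta)^{\alpha}\psi(x')$ as the convergent integral of $\bigl([\psi(x)-\psi(x')]-[\psi(x-z)-\psi(x'-z)]\bigr)|z|^{-d-2\alpha}$ and split at $|z|=\delta$, bounding the numerator by $2\|\nabla\psi\|_{L^{\infty}}|z|$ on $\{|z|\le\delta\}$ and by $2\|\nabla\psi\|_{L^{\infty}}|x-x'|$ on $\{|z|>\delta\}$; optimizing $\delta\sim|x-x'|$ gives (in fact H\"older) continuity. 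For decay at infinity I would fix $\varepsilon>0$ and split the singular integral \eqref{singint} for $(-\Delta)^{\alpha}\psi(x)$ into $\{|z|\le R\}$, $\{R<|z|\le M\}$, $\{|z|>M\}$: the first piece is $\lesssim\|\nabla\psi\|_{L^{\infty}}R^{1-2\alpha}$ and the last $\lesssim\|\psi\|_{L^{\infty}}M^{-2\alpha}$, each made $<\varepsilon/3$ by choosing $R$ small and $M$ large, while the middle piece is $\le R^{-d-2\alpha}|\mathbb{B}_M|\bigl(|\psi(x)|+\sup_{|w-x|\le M}|\psi(w)|\bigr)\to0$ as $|x|\to\infty$ because $\psi$ vanishes at infinity uniformly over the bounded shift set.

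\emph{Base case $\alpha\in[1/2,1)$, $k=2$.} Here the plain singular integral is no longer absolutely convergent near the origin, so I would use the second--difference representation recorded in \S2.2, namely $(-\Delta)^{\alpha}\psi(x)=\tfrac{C_{d,\alpha}}{2}\int_{\rd}\bigl(2\psi(x)-\psi(x-z)-\psi(x+z)\bigr)|z|^{-d-2\alpha}\,dz$, valid for $\psi\in C^2_0\subset W^{2,\infty}$, and run exactly the same three estimates (boundedness, continuity, decay), now bounding the relevant numerators near $z=0$ by $\|\nabla^2\psi\|_{L^{\infty}}|z|^2$ via second--order Taylor, which is integrable against $|z|^{-d-2\alpha}$ since $2-2\alpha>0$. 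This is the one place the loss of two derivatives rather than one enters, and is exactly why $2\alpha\ge1$ costs the extra derivative.

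\emph{Bootstrap.} The remaining ingredient is the commutation identity $\partial_i(-\Delta)^{\alpha}\psi=(-\Delta)^{\alpha}\partial_i\psi$, which I would prove by differentiation under the integral sign: for $\alpha<1/2$ the $x$--derivative of the integrand in \eqref{singint} (and of the corresponding difference quotients) is dominated near a given point by $\|\nabla^2\psi\|_{L^{\infty}}|z|^{1-d-2\alpha}\mathbf 1_{\{|z|\le1\}}+2\|\nabla\psi\|_{L^{\infty}}|z|^{-d-2\alpha}\mathbf 1_{\{|z|>1\}}\in L^1(\rd)$ (so $\psi\in C^2$ suffices), and for $\alpha\ge1/2$ one differentiates the regularized representation, with dominating function $\|\nabla^3\psi\|_{L^{\infty}}|z|^{2-d-2\alpha}\mathbf 1_{\{|z|\le1\}}+4\|\nabla\psi\|_{L^{\infty}}|z|^{-d-2\alpha}\mathbf 1_{\{|z|>1\}}\in L^1(\rd)$ (so $\psi\in C^3$ suffices); since $(-\Delta)^{\alpha}\partial_i\psi$ is continuous by the base cases, this yields $(-\Delta)^{\alpha}\psi\in C^1$ together with the formula. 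Then a finite induction on the multi--index $\beta$ finishes the proof: for $\theta\in C^k_0$ and $|\beta|\le k-1$ (resp. $|\beta|\le k-2$), iterating the commutation identity on the successive derivatives of $\theta$ --- each of which lies in $C^2_0$ (resp. $C^3_0$) at the step where it is differentiated, since $\theta\in C^k_0$ --- gives $D^{\beta}(-\Delta)^{\alpha}\theta=(-\Delta)^{\alpha}D^{\beta}\theta$, and applying the base case to $D^{\beta}\theta\in C^1_0$ (resp. $C^2_0$) shows this function is in $C^0_0$. Hence $(-\Delta)^{\alpha}\theta\in C^{k-1}_0$ when $\alpha\in(0,1/2)$ and $(-\Delta)^{\alpha}\theta\in C^{k-2}_0$ when $\alpha\in[1/2,1)$.

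\emph{Main obstacle.} I expect the only genuinely delicate step to be the decay--at--infinity estimate --- in particular controlling the intermediate annulus $R<|z|\le M$ and exploiting that $\psi$ (or the relevant derivative) vanishes at infinity \emph{uniformly} over the bounded set of shifts $\{z:|z|\le M\}$. The boundedness and continuity estimates amount to the near/far splittings already used in Lemma \ref{bdfraclap}, and the bootstrap is routine once the correct (regularized, for $\alpha\ge1/2$) representation is in hand; the bookkeeping of which order of regularity is consumed at each differentiation step is what produces the $k-1$ versus $k-2$ dichotomy.
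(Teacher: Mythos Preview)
Your proposal is correct and follows essentially the same strategy as the paper: reduce to the base cases $k=1$ (for $\alpha<1/2$) and $k=2$ (for $\alpha\in[1/2,1)$), then prove decay at infinity via a near/intermediate/far three-piece split of the singular integral. The differences are only technical: the paper regularizes uniformly in $\alpha$ via $\theta(x)-\theta(x-y)-y\cdot\nabla\theta(x)\chi_{\{|y|\le1\}}$ with a \emph{fixed} inner radius $1$, so that decay of $\nabla\theta$ (resp.\ $\nabla^2\theta$) is used on the singular piece, whereas you keep the plain integral for $\alpha<1/2$ and shrink the inner radius instead (so only decay of $\psi$, not $\nabla\psi$, is needed there), and you use the symmetric second difference rather than the gradient-corrected form for $\alpha\ge1/2$. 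You are also more explicit than the paper about the continuity estimate and about the commutation $\partial_i(-\Delta)^{\alpha}=(-\Delta)^{\alpha}\partial_i$ needed for the bootstrap, which the paper leaves to the reader.
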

\begin{proof}
It suffices to prove the results for $k=1,2$, when $\alpha\in(0,1/2)$ and $[1/2,1)$, respectively. For $\alpha\in(0,1)$, we regularize the singular integral \eqref{singint} by
 \[
 (-\Delta)^{\alpha}\theta(x)=C_{d,\alpha}\int_{\mathbb{R}^d}\frac{\theta(x)-\theta(x-y)-y\cdot \nabla\theta(x)\chi_{|y|\leq 1}(y)}{|y|^{d+2\alpha}}\ dy,
 \]
making the above integral absolutely convergent for $\theta\in C^1$, if $\alpha\in(0,1/2)$ and $\theta\in C^2$, if $\alpha\in[1/2,1)$. We start by splitting the integral into a singular part, intermediate part and decaying part as follows
\begin{align*}
&I_S:=\int_{|y|\leq1}\frac{\theta(x)-\theta(x-y)-y\cdot \nabla\theta(x)}{|y|^{d+2\alpha}}\ dy,\\
&I_M:=\int_{1\leq|y|\leq R}\frac{\theta(x)-\theta(x-y)}{|y|^{d+2\alpha}}\ dy,\\
&I_R:=\int_{|y|\geq R}\frac{\theta(x)-\theta(x-y)}{|y|^{d+2\alpha}}\ dy.
\end{align*}
 In what follows, $C_{d,\alpha}$ always denotes a positive constant depending on $d,\alpha$, degenerating as $\alpha\rightarrow0^+$ or $1^-$, and whose value may change from line to line. For any given $\epsilon>0$, we start by choosing a large enough $R>1$ such that 
 \[
 \int_{|y|\geq R}|y|^{-d-2\alpha}\ dz<\frac{\epsilon}{6C_{d,\alpha}\|\theta\|_{L^{\infty}}},
 \]
 making $I_R<\epsilon/3$. Next, we chose a large enough $N_0>R$ such that 
 \[
 |\theta(z)|\leq\frac{\epsilon}{6 C_{d,\alpha}},
 \] 
 whenever $|z|\geq N_0-R$, rendering $I_M<\epsilon/3$ provided $|x|>N_0$. To handle $I_S$, notice that  given $(x,y)\in\mathbb{R}^d\times\mathbb{R}^d$, by the mean value theorem, we can find some $\lambda=\lambda(x,y)\in(0,1)$ such that 
 \[
 \theta(x)-\theta(x-y)=y\cdot\nabla\theta\left(x+(\lambda-1)y\right),
 \]
implying the singular integrand of $I_S$ is bounded from above by  
\begin{equation}\label{mvt}
\frac{|\theta(x)-\theta(x-y)-y\cdot\nabla\theta(x)|}{|y|^{d+2\alpha}}\leq\frac{\left|\nabla\theta\left(x+(\lambda-1)y\right)-\nabla\theta(x)\right|}{|y|^{d+2\alpha-1}}.
\end{equation}
For $\alpha\in(0,1/2)$, we can chose a large enough $N_1>1$ such that, whenever $|z|\geq N_1-1$, 
\[
\left|\nabla\theta(z)\right|<\frac{\epsilon}{6C_{d,\alpha}},
\]
making $I_S<\epsilon/3$ when $|x|\geq N_1$. This concludes the case when $\alpha\in(0,1/2)$. For $\alpha\in[1/2,1)$, we apply the mean value theorem once again to \eqref{mvt} to get a $\sigma\in(0,1)$ and conclude that the singular integrand is now dominated by
\[
|y|^{2-2\alpha-d}\left|\nabla^2\theta(x+\sigma(\lambda-1)y)\right|,
\]
allowing us to conclude by choosing a large enough $N_1$ such that
\[
\left|\nabla^2\theta(z)\right|<\frac{\epsilon}{3C_{d,\alpha}},
\]
whenever $|z|\geq N_1-1$, meaning $I_S<\epsilon/3$ when $|x|\geq N_1$.
\end{proof}	
\begin{lem}\label{fraclapmod}
 Suppose $\theta\in C(\mathbb{R}^d)$ has a strong modulus of continuity $\omega$, and let $\alpha\in(0,1/2)$. Then $(-\Delta)^{\alpha}\theta$ has modulus of continuity
 \begin{equation}\label{modlap}
 \widetilde{\omega}(\xi):=C_{d,\alpha}|\mathbb{S}^{d-1}|\alpha^{-1}\int_0^\xi\frac{\omega'(\eta)}{\eta^{2\alpha}}\ d\eta.
 \end{equation}
 \end{lem}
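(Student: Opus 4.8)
The plan is to estimate the difference $(-\Delta)^{\alpha}\theta(x) - (-\Delta)^{\alpha}\theta(y)$ directly from the singular integral representation \eqref{singint}, exploiting the fact that $\theta$ has modulus of continuity $\omega$. First I would fix $x,y\in\mathbb{R}^d$ with $|x-y|=\xi>0$ and write
\[
(-\Delta)^{\alpha}\theta(x)-(-\Delta)^{\alpha}\theta(y)=C_{d,\alpha}\,P.V.\int_{\mathbb{R}^d}\frac{\bigl[\theta(x)-\theta(x-z)\bigr]-\bigl[\theta(y)-\theta(y-z)\bigr]}{|z|^{d+2\alpha}}\ dz.
\]
The key observation is that the second-difference numerator can be read either as $[\theta(x)-\theta(y)]-[\theta(x-z)-\theta(y-z)]$ or, after the substitution $z\mapsto -z$ on half the integral, symmetrized; in either grouping each bracket is controlled by $\omega$ evaluated at a distance that is at most $\xi$ (for the "horizontal" differences) or at most $|z|$ (for the "vertical" differences $\theta(x)-\theta(x-z)$). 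The idea is to split the $z$-integral at radius $|z|=\xi$: on the inner region $|z|\le\xi$ bound the numerator by $\omega(\xi)+\omega(|z|)\le 2\omega(|z|)$... actually more carefully by using the representation that isolates a single increment, $|\theta(x)-\theta(x-z)|\le\omega(|z|)$ and similarly for $y$, giving numerator $\le 2\omega(|z|)$; on the outer region $|z|\ge\xi$ use the other grouping, numerator $\le 2\omega(\xi)\le 2\int$ — but this does not by itself produce the stated one-dimensional integral, so the real engine must be a more clever pointwise bound.

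The cleaner route, which is the one I expect the author takes (this being a generalization of \cite[Proposition~2.5]{Silvestre2007}), is: by rotation and translation invariance of $(-\Delta)^{\alpha}$ reduce to $y=0$ and $x=\xi e_1$ with $\xi>0$, then write the difference as an integral over $\mathbb{R}^d$ of $[\phi(z)-\phi(z-\xi e_1)]/|z|^{d+2\alpha}$-type kernels after a change of variables, and recognize that differentiating the kernel $|z|^{-d-2\alpha}$ along the segment from $0$ to $\xi e_1$ reduces a vertical displacement to an integral of $\omega'$. Concretely, the plan is to prove the bound by first establishing it for $\theta$ replaced by its "one-dimensional profile along the worst direction," using the elementary inequality that for a concave $\omega$ the quantity $\omega(a+b)-\omega(a)\le\int_0^b\omega'(\eta)\,d\eta$ and monotonicity of $\omega'/\eta^{2\alpha}$-type expressions. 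After writing everything in polar-type coordinates, the angular integral contributes the factor $|\mathbb{S}^{d-1}|$, the radial integral against $\rho^{-1-2\alpha}$ after one integration by parts (throwing the derivative onto $\omega$) produces $\alpha^{-1}$ and converts $\omega$ into $\omega'$, and one is left with exactly $C_{d,\alpha}|\mathbb{S}^{d-1}|\alpha^{-1}\int_0^\xi \omega'(\eta)\eta^{-2\alpha}\,d\eta$. The concavity of $\omega$ (so $\omega'$ is nonincreasing) is what guarantees the integral converges at $\eta=0$ and that $\widetilde\omega$ is itself a legitimate modulus of continuity (nondecreasing, and concave since its derivative $C_{d,\alpha}|\mathbb{S}^{d-1}|\alpha^{-1}\omega'(\xi)\xi^{-2\alpha}$ is a product of two nonincreasing nonnegative functions, hence nonincreasing), though verifying these structural properties of $\widetilde\omega$ may be deferred to where the lemma is applied.

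I expect the main obstacle to be the bookkeeping in the splitting/symmetrization step: making the pointwise estimate on the second-difference numerator tight enough that one recovers the sharp constant structure $C_{d,\alpha}|\mathbb{S}^{d-1}|\alpha^{-1}$ rather than a lossy constant, and in particular correctly handling the principal-value cancellation so that the naive bound "$2\omega(\xi)\int_{|z|\ge\xi}|z|^{-d-2\alpha}dz = \frac{2\omega(\xi)|\mathbb{S}^{d-1}|}{2\alpha}\xi^{-2\alpha}$" on the far field gets absorbed into the stated integral rather than appearing as a separate term — this is presumably arranged by noting $\omega(\xi)\xi^{-2\alpha}\le$ (up to constants) $\int_0^\xi\omega'(\eta)\eta^{-2\alpha}d\eta$ via $\omega(\xi)=\int_0^\xi\omega'(\eta)d\eta$ together with $\eta^{-2\alpha}\ge\xi^{-2\alpha}$ for $\eta\le\xi$, i.e. $\int_0^\xi\omega'(\eta)\eta^{-2\alpha}\,d\eta\ge\xi^{-2\alpha}\omega(\xi)$. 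The remaining routine work is confirming absolute convergence of all the integrals for $\alpha\in(0,1/2)$ using the Lipschitz-type control $\omega'(0^+)<\infty$ is \emph{not} assumed here — only that $\omega$ is a modulus — so near-field convergence must instead come from $\omega(|z|)|z|^{-d-2\alpha}$ being integrable near $0$, which holds precisely because $2\alpha<1$ and $\omega$ is subadditive hence at most linear, giving an integrand $\lesssim|z|^{1-d-2\alpha}$.
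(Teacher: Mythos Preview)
Your initial approach---split the $z$-integral at radius $\xi=|x-y|$, bound the numerator by $2\omega(|z|)$ on $|z|\le\xi$ (vertical differences) and by $2\omega(\xi)$ on $|z|>\xi$ (horizontal differences)---is exactly the paper's proof, so your detour into kernel differentiation along the segment is unnecessary. The one clean step you overlooked is that integrating the inner radial integral $2|\mathbb{S}^{d-1}|\int_0^\xi\omega(\eta)\eta^{-1-2\alpha}\,d\eta$ by parts produces a boundary term $-|\mathbb{S}^{d-1}|\alpha^{-1}\omega(\xi)\xi^{-2\alpha}$ that \emph{exactly} cancels the outer contribution $|\mathbb{S}^{d-1}|\alpha^{-1}\omega(\xi)\xi^{-2\alpha}$, so the stated constant falls out on the nose without needing your absorption inequality $\omega(\xi)\xi^{-2\alpha}\le\int_0^\xi\omega'(\eta)\eta^{-2\alpha}\,d\eta$.
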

 \begin{rem}
 The modulus of continuity $\omega$ need not be strong. All what is required is for the integral \eqref{modlap} to be convergent, that is we require $\omega(\xi)= O(\xi^{\beta})$ some $\beta\in(2\alpha,1]$ when $\xi$ is small.
 \end{rem}

 \begin{proof}
Following Remark \ref{rmklips}, we must have $\theta\in W^{1,\infty}(\mathbb{R}^d)$, and so for $\alpha\in(0,1/2)$, the singular integral \eqref{singint} is absolutely convergent. Therefore, for arbitrary $\rho>0$, $(x,z)\in\mathbb{R}^d\times\mathbb{R}^d$, we must have
 \[
 \left|(-\Delta)^{\alpha}\theta(x)-(-\Delta)^{\alpha}\theta(z)\right|\leq C_{d,\alpha}(I_1+I_2),
 \]
 where
 \begin{align*}
 &I_1:=\left|\int_{|y|\leq\rho}\frac{\theta(x)-\theta(x-y)-\left(\theta(z)-\theta(z-y)\right)}{|y|^{d+2\alpha}}\ dy\right|,\\
 &I_2:=\left|\int_{|y|>\rho}\frac{\theta(x)-\theta(z)-\left(\theta(x-y)-\theta(z-y)\right)}{|y|^{d+2\alpha}}\ dy\right|	.
 \end{align*}
For $I_1$, we estimate from above by
\[
I_1\leq  2\int_{|y|\leq\rho}\frac{\omega(|y|)}{|y|^{d+2\alpha}}\ dy=2|\mathbb{S}^{d-1}|\int_0^\rho\frac{\omega(\eta)}{\eta^{2\alpha+1}}\ d\eta=|\mathbb{S}^{d-1}|\alpha^{-1}\int_0^\rho\frac{\omega'(\eta)}{\eta^{2\alpha}}\ d\eta-|\mathbb{S}^{d-1}|\frac{\omega(\rho)}{\alpha\rho^{2\alpha}}
\]
where we integrated by parts in the last step. For $I_2$, we have
\[
I_2\leq2\omega(|x-z|)\int_{|y|\geq \rho}|y|^{-d-2\alpha}\ dy=|\mathbb{S}^{d-1}|\alpha^{-1}\frac{\omega(|x-z|)}{\rho^{2\alpha}},
\]
from which we conclude by choosing $\rho=|x-z|$.
  \end{proof}
\section{Proof of Theorem \ref{shortimeexis}}\label{nseclable1}
In this section, $\nabla$ always denotes the gradient vector acting on spatial coordinates, while $C_{d,\alpha}\geq1$ always denotes an absolute constant depending on the dimension $d$ and $\alpha$, may blow up as $\alpha\rightarrow1/2$, and whose value may change from line to line. Let us start by recalling some properties of the heat kernel
\begin{align}
&\int_{\mathbb{R}^d}\Psi(s,y)\ dy=1,\label{intkern}\\
&\int_{\mathbb{R}^d}\left|\nabla\Psi(s,x-y)\right|\ dy=\frac{C_d}{\sqrt{\nu s}},\label{intgradkern}\\
&\int_{\mathbb{R}^d}|x-y|^{\gamma}|\partial_s\Psi(s,x-y)|\ dy\leq C_d\nu^{\gamma/2}s^{\gamma/2-1},\label{intdtkern}\\
&\int_{\mathbb{R}^d}\left|\nabla\Psi(s,x-y)-\nabla\Psi(s,z-y)\right|\ dy\leq \frac{C_{d}}{\nu s}|x-z|,	\label{lipkern}
\end{align}
where $s,\gamma>0$, and $(x,z)\in\mathbb{R}^d\times\mathbb{R}^d$ are arbitrary. From \eqref{intgradkern} and \eqref{lipkern} we get
\begin{equation}\label{holdkern}
	\int_{\mathbb{R}^d}\left|\nabla\Psi(s,x-y)-\nabla\Psi(s,z-y)\right|\ dy\leq \frac{C_{d,\beta}|x-z|^{\beta}}{(\nu s)^{\frac{1}{2}(1+\beta)}},
\end{equation}
where $\beta\in(0,1)$ is arbitrary. Properties \eqref{intkern}-\eqref{intdtkern} follow by explicit calculations, while it is somewhat tedious (yet straightforward) to prove inequality \eqref{lipkern}, see for instance \cite[Lemma~4.3]{BAMJL2000}. We will also make use of the following Gronwall-type inequality, which can be proved by first using the H\"older inequality and then proceeding as in the proof of the integral version of Gronwall's inequality \cite[Appendix B]{EvansPDE2010}.
\begin{lem}\label{gronwall}
Let $q\in[1,\infty)$, $1/q+1/r=1$, $T_2\geq T_1$, $C_0\geq0$ and assume that $g\in L^q(T_1,T_2)$, $f\in L^r(0,T_2-T_1)$ are both non-negative. If 
\[
g(t)\leq \int_{T_1}^{t}f(t-s)g(s)\ ds+C_0,\quad a.e.\  t\in[T_1,T_2],
\]
then
\[
g(t)\leq C_0\left[2\left(\int_{0}^{t-T_1}|f(s)|^rds\right)^{1/r}\left(\int_{T_1}^te^{h(t)-h(s)}ds\right)^{1/q}+1\right],\quad a.e.\ t\in[T_1,T_2],
\]
where
\[
h(t):=2^q\int_{T_1}^t\left(\int_{0}^{s-T_1}|f(\sigma)|^rd\sigma\right)^{q/r}ds.
\]
\end{lem}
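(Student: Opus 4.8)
The plan is to convert the singular convolution inequality into a classical linear differential inequality for the quantity $G(t):=\int_{T_1}^{t}g(s)^{q}\,ds$ and then integrate it. A tempting alternative is to substitute the hypothesis into itself once and control the iterated kernel $\int_{\tau}^{t}f(t-s)f(s-\tau)\,ds=(f\ast f)(t-\tau)$, which would reduce the problem to the classical Gronwall lemma provided $f\ast f$ were bounded; but for a general $f\in L^{r}$, Young's inequality only gives $f\ast f\in L^{s}$ with $1/s=2/r-1$, so the iterated kernel need not lie in $L^{\infty}$ and this route fails in general. Hence I would linearize directly via Hölder's inequality.

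First, apply Hölder's inequality with the conjugate exponents $r$ and $q$ to the right-hand side of the hypothesis: for a.e. $t\in[T_1,T_2]$,
\[
\int_{T_1}^{t}f(t-s)g(s)\,ds\le\Big(\int_{T_1}^{t}|f(t-s)|^{r}\,ds\Big)^{1/r}\Big(\int_{T_1}^{t}g(s)^{q}\,ds\Big)^{1/q}=F(t)^{1/r}G(t)^{1/q},
\]
where $F(t):=\int_{0}^{t-T_1}|f(\sigma)|^{r}\,d\sigma$ after the substitution $\sigma=t-s$. Note that $F$ is nondecreasing and bounded on $[T_1,T_2]$ by $\|f\|_{L^{r}(0,T_2-T_1)}^{r}$, so $F^{q/r}\in L^{\infty}(T_1,T_2)$; and since $g\in L^{q}$ we have $g^{q}\in L^{1}$, so $G$ is absolutely continuous with $G'(t)=g(t)^{q}$ a.e.\ and $G(T_1)=0$. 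The hypothesis thus reads $g(t)\le F(t)^{1/r}G(t)^{1/q}+C_{0}$ for a.e.\ $t$.

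Second, raise to the $q$-th power and use the convexity bound $(a+b)^{q}\le2^{q-1}(a^{q}+b^{q})$, valid for $q\ge1$, to obtain the linear differential inequality
\[
G'(t)\le 2^{q-1}F(t)^{q/r}G(t)+2^{q-1}C_{0}^{q}\qquad\text{for a.e. }t\in[T_1,T_2].
\]
Multiplying by the positive, Lipschitz integrating factor $e^{-A(t)}$, where $A(t):=2^{q-1}\int_{T_1}^{t}F(\sigma)^{q/r}\,d\sigma=\tfrac12 h(t)$, the left side becomes $\tfrac{d}{dt}\big(e^{-A(t)}G(t)\big)$, which is therefore $\le 2^{q-1}C_{0}^{q}e^{-A(t)}$ a.e.; integrating over $[T_1,t]$ and using $G(T_1)=0$ yields $G(t)\le 2^{q-1}C_{0}^{q}\int_{T_1}^{t}e^{A(t)-A(s)}\,ds$. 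Inserting this into $g(t)\le F(t)^{1/r}G(t)^{1/q}+C_{0}$ and using $2^{(q-1)/q}\le2$ together with $A(t)-A(s)=\tfrac12(h(t)-h(s))\le h(t)-h(s)$ (since $h$ is nondecreasing) gives the asserted estimate; the argument in fact produces a slightly sharper constant and exponent, and the form in the statement is the cleaned-up version.

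The only delicate points are the standard measure-theoretic ones needed to legitimize the passage from the a.e.\ differential inequality to the integral bound: that $F^{q/r}$ is bounded and measurable (so $A$ is Lipschitz), that $G$ and $e^{-A}G$ are absolutely continuous, and that every inequality above holds only for almost every $t$. The borderline case $q=1$ (so $r=\infty$) is handled identically, using the $L^{\infty}$ norm of $f$ on $(0,t-T_1)$ in place of $F(t)^{1/r}$ and $F^{q/r}\equiv1$.
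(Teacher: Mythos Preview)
The paper states this Gronwall-type lemma without proof, so there is no argument to compare against; I can only assess your proposal on its own merits.

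Your approach is correct and is the natural one. Applying H\"older with exponents $r,q$ to the convolution, setting $G(t)=\int_{T_1}^t g^q$, and using $(a+b)^q\le 2^{q-1}(a^q+b^q)$ yields the linear differential inequality $G'(t)\le 2^{q-1}F(t)^{q/r}G(t)+2^{q-1}C_0^q$, which integrates via the factor $e^{-A(t)}$ with $A=\tfrac12 h$ to give $G(t)\le 2^{q-1}C_0^q\int_{T_1}^t e^{A(t)-A(s)}\,ds$. Substituting back, using $2^{(q-1)/q}\le 2$ and $A(t)-A(s)\le h(t)-h(s)$ recovers exactly the stated bound. The absolute continuity of $G$ and boundedness of $F^{q/r}$ justify the integrating-factor step, as you note.

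One small slip: in your final sentence on the borderline case $q=1$, you write ``$F^{q/r}\equiv 1$'', but the coefficient multiplying $G$ in the differential inequality should be $\|f\|_{L^\infty(0,\,t-T_1)}$, not $1$ (there is no convexity loss since $2^{q-1}=1$, but the H\"older factor remains). The statement of the lemma is in any case awkward at $q=1$ since $\bigl(\int|f|^r\bigr)^{q/r}$ is ill-defined for $r=\infty$, and the paper only invokes the lemma with $q=3$, $r=3/2$, so this endpoint is immaterial. Your main argument for $q\in(1,\infty)$ is complete.
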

The proof of Theorem \ref{shortimeexis} closely follows the ideas presented in \cite{Ben-ArtziAmour1998, BAMJL2000}, and will be broken down into several propositions. We begin by constructing strong solutions that exist at least for a short time and which inherit periodicity and decay properties from the initial data, Proposition \ref{construct}. This is followed by proving that strong solutions depend continuously on initial data and hence are unique in their own class, Proposition \ref{wellposedness}. Those two propositions  give us a local well-posedness result in the space $W^{1,\infty}(\rd)\cap X$, where $X$ is either the space of continuous periodic functions defined on $\rd$ or the space of functions that vanish at infinity. We conclude by showing that such solutions experience parabolic regularity (that is, they are classical) and derive a regularity criterion in Propositions \ref{parabolic} and \ref{criteria}, respectively.

\begin{prop}\label{construct}
	Let $d\in \mathbb{N}$, $\nu>0$, $\alpha\in(0,1/2)$, $\lambda\in\mathbb{R}$, $\mu>0$, $p\in[1,\infty)$ and $\theta_0\in W^{1,\infty}\left(\mathbb{R}^d\right)$ with no further restrictions. Then there is a $T_0=T_0(\theta_0,d,\alpha,p,\nu,\mu,\lambda)>0$ and a strong solution $\theta$ to \eqref{maineq} on $[0,T_0]$ corresponding to $\theta_0$. Furthermore, if $\theta_0$ is periodic with period $L>0$, then so is $\theta(t,\cdot)$, and if $\theta_0\in C_0(\mathbb{R}^d)$, then so is $\theta(t,\cdot)$ for $t\in[0,T_0]$.
\end{prop}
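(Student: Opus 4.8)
The plan is to set up a standard Picard-type fixed point argument in the space $X_T := C([0,T];W^{1,\infty}(\mathbb{R}^d))$, working with the integral formulation from Definition \ref{strongsol}. Define the map
\[
\Phi(\theta)(t,x) := \int_{\mathbb{R}^d}\Psi(t,x-y)\theta_0(y)\,dy + \lambda\int_0^t\!\!\int_{\mathbb{R}^d}\Psi(t-s,x-y)|\nabla\theta(s,y)|^p\,dy\,ds + \mu\int_0^t\!\!\int_{\mathbb{R}^d}\Psi(t-s,x-y)(-\Delta)^{\alpha}\theta(s,y)\,dy\,ds,
\]
and show that, for $T_0$ small enough depending on $\|\theta_0\|_{W^{1,\infty}}$ and the parameters, $\Phi$ maps a suitable closed ball $\overline{B}_R \subset X_{T_0}$ into itself and is a contraction there. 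The linear term is handled by \eqref{intkern} and \eqref{intgradkern} (the gradient falls on the kernel, producing an $L^1$-in-time singularity $(\nu s)^{-1/2}$ which is integrable). For the nonlinear term I would put the gradient on the heat kernel as well, so that one needs to control $\||\nabla\theta(s,\cdot)|^p\|_{L^\infty} = \|\nabla\theta(s,\cdot)\|_{L^\infty}^p \le R^p$; integrating $(\nu s)^{-1/2}$ against a constant gives a factor $\sim T_0^{1/2}$. For the nonlocal term, Lemma \ref{bdfraclap} gives $\|(-\Delta)^\alpha\theta(s,\cdot)\|_{L^\infty} \le C\|\theta(s,\cdot)\|_{L^\infty}^{1-2\alpha}\|\nabla\theta(s,\cdot)\|_{L^\infty}^{2\alpha} \le CR$, and again the $(\nu s)^{-1/2}$ singularity is integrable, yielding a factor $\sim T_0^{1/2}$ (or $T_0$ if one does not differentiate the kernel for the zeroth-order estimate). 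Choosing $R := 2\|\theta_0\|_{W^{1,\infty}}$ (say) and then $T_0$ small makes the self-map property hold; the same estimates applied to $\Phi(\theta_1)-\Phi(\theta_2)$, using the local Lipschitz bound $\big||\nabla\theta_1|^p - |\nabla\theta_2|^p\big| \le pR^{p-1}|\nabla\theta_1-\nabla\theta_2|$ and the linearity of $(-\Delta)^\alpha$, give a contraction for $T_0$ possibly smaller still. The Banach fixed point theorem then produces the unique fixed point $\theta \in X_{T_0}$, which is by construction a strong solution.

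Next I would verify the two structural persistence properties. For periodicity: the heat kernel convolution preserves $L$-periodicity, $(-\Delta)^\alpha$ commutes with translations hence preserves periodicity (using the representation \eqref{singint}, which makes sense pointwise for $W^{1,\infty}$ functions by Lemma \ref{bdfraclap}), and $|\nabla\theta|^p$ is periodic if $\theta$ is; so the subspace of $X_{T_0}$ consisting of (spatially) $L$-periodic functions is closed and invariant under $\Phi$, whence the fixed point lies in it. (Alternatively: $\theta(\cdot,\cdot+Le_j)$ is also a fixed point corresponding to $\theta_0$, and uniqueness forces equality.) For decay at infinity: one shows that the subspace $C([0,T_0];C_0(\mathbb{R}^d) \cap W^{1,\infty})$ — i.e. $\theta(t,\cdot)$ and $\nabla\theta(t,\cdot)$ vanish at infinity uniformly in $t$ — is closed and $\Phi$-invariant. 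The heat semigroup maps $C_0$ into $C_0$ (and commuting $\nabla$ past the convolution, $\nabla$ of a $C_0$ function convolved with the kernel stays $C_0$); $|\nabla\theta|^p \in C_0$ whenever $\nabla\theta \in C_0$; and $(-\Delta)^\alpha$ maps $C_0^1 \to C_0^0$ by Lemma \ref{decayfraclap} (with $\alpha\in(0,1/2)$), so the Duhamel integrals stay in $C_0$. Again the fixed point inherits the property.

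The main obstacle I anticipate is the bookkeeping in the fixed point argument for the $W^{1,\infty}$ norm of the Duhamel terms: one must be careful that differentiating under the integral sign is justified and that the gradient is genuinely moved onto the kernel (so that the $L^\infty$ bounds of $|\nabla\theta|^p$ and $(-\Delta)^\alpha\theta$ suffice, rather than needing control of $\nabla|\nabla\theta|^p$ or $\nabla(-\Delta)^\alpha\theta$, which are not available at this regularity). This is exactly the device used in \cite{Ben-ArtziAmour1998, BAMJL2000}, and the only new wrinkle is the nonlocal term, which is by comparison benign since Lemma \ref{bdfraclap} controls it by the quantities already being tracked, with the good integrable singularity $(\nu s)^{-1/2}$ from the kernel gradient. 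A secondary point requiring a little care is continuity in time of $t\mapsto\theta(t,\cdot)$ up to $t=0$ in the $W^{1,\infty}$ norm, which follows from the strong continuity of the heat semigroup on $W^{1,\infty}$ (more precisely, on $\theta_0$ together with its weak gradient via the same kernel-gradient estimates) plus the fact that the Duhamel terms vanish in $W^{1,\infty}$ as $t\to 0^+$ by the $T_0^{1/2}$ bounds above.
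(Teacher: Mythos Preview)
Your approach is essentially the paper's: the paper runs an explicit Picard iteration (showing $\{\theta_k\}$ is Cauchy in $X_{T_0}$) rather than packaging it as a Banach fixed point on a closed ball, but the estimates --- \eqref{intkern}, \eqref{intgradkern}, Lemma \ref{bdfraclap} for the self-map/contraction bounds, and Lemma \ref{decayfraclap} for the $C_0$-persistence --- and the treatment of periodicity and decay via invariance of closed subspaces are the same.

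One word of caution on your final paragraph: the heat semigroup is \emph{not} strongly continuous on $W^{1,\infty}$ at $t=0$, since $\nabla\theta_0\in L^\infty$ need not be uniformly continuous, so your stated justification for $\|\theta(t,\cdot)-\theta_0\|_{W^{1,\infty}}\to 0$ does not go through as written. The paper does not address this within the construction either; it is handled separately in Proposition \ref{wellposedness}, where the $L^\infty$-limit of $\nabla\theta(t,\cdot)$ as $t\to T_1^+$ is identified with $\nabla\theta_0$ by integrating against compactly supported test functions and using the already-established $L^\infty$-convergence of $\theta(t,\cdot)$ to $\theta_0$.
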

\begin{proof}
For $T>0$, let $X_T$ be the Banach space $X_T:=C\left([0,T];W^{1,\infty}(\mathbb{R}^d)\right)$ with the norm
\[
\|f\|_{X_T}:=\max_{t\in[0,T]}\|f(t)\|_{W^{1,\infty}}.
\]
We will construct a strong solution by choosing a small enough $T_0>0$ such that the inductively defined sequence of functions $\left\{\theta_k\right\}_{k=1}^{\infty}$
\begin{align*}
\theta_{1}(t,x)&:=\int_{\mathbb{R}^d}\Psi(t,y)\theta_0(x-y)\ dy,\\
\theta_{k}(t,x)&:=\theta_{1}(t,x)+\lambda\int_0^t\int_{\mathbb{R}^d}\Psi(t-s,x-y)\left|\nabla\theta_{k-1}(s,y)\right|^p\ dy\ ds\\
&\qquad\quad\quad +\mu\int_0^t\int_{\mathbb{R}^d}\Psi(t-s,x-y)\left(-\Delta\right)^{\alpha}\theta_{k-1}(s,y)\ dy\ ds, \quad k\geq2
\end{align*}
is Cauchy in $X_{T_0}$. We start by obtaining some uniform bounds. Let 
\[
M_0:=1+\|\theta_0\|_{L^{\infty}}, \quad M_1:=1+\|\nabla\theta_0\|_{L^{\infty}},
\]
\[
\kappa_0:=C_{d,\alpha}\left(2p|\lambda|M_1^p+\mu M_0^{1-2\alpha}M_1^{2\alpha}\right),
\]
and set
\[
T_0:=\frac{1}{16}\min\left\{\nu\kappa_0^{-2},\kappa_0^{-1}\right\}>0.
\]
We obtain the following bounds, uniform in $k\in\mathbb{N}$, $t\in[0,T_0]$,
\begin{align}
&\|\theta_k(t,\cdot)\|_{L^{\infty}}\leq M_0, \quad \|\nabla\theta_k(t,\cdot)\|_{L^{\infty}}\leq M_1,\label{supbds}
\end{align}
via an inductive argument: they hold trivially for $\theta_1$, and assuming they are true for $\theta_{k-1}$, we get, by using \eqref{intkern}, along with bound \eqref{nlabel2} with $\gamma=1$ from Lemma \ref{bdfraclap}, that
\begin{align*}
|\theta_{k}(t,x)|\leq & \|\theta_{0}\|_{L^{\infty}}+|\lambda|\int_0^t\left\|\nabla\theta_{k-1}(s,\cdot)\right\|_{L^{\infty}}^pds\\
&+C_{d,\alpha}\mu\int_0^t\left\|\theta_{k-1}(s,\cdot)\right\|_{L^{\infty}}^{1-2\alpha}\left\|\nabla\theta_{k-1}(s,\cdot)\right\|_{L^{\infty}}^{2\alpha}ds\\
&\leq \|\theta_{0}\|_{L^{\infty}}+\left(|\lambda|M_1^p+\mu C_{d,\alpha}M_0^{1-2\alpha}M_1^{2\alpha}\right)T_0\leq\|\theta_{0}\|_{L^{\infty}}+\kappa_0T_0,
\end{align*}
By choice of $T_0$, the right-hand side is bounded from above by $M_0$. Similarly, except now using \eqref{intgradkern}, we get that
\begin{align*}
|\nabla\theta_k(t,x)|\leq &\left\|\nabla\theta_0\right\|_{L^{\infty}}+C_{d,\alpha}\nu^{-1/2}\left(|\lambda|M_1^p+\mu M_0^{1-2\alpha}M_1^{2\alpha}\right)\int_0^t s^{-1/2}ds\\
&\leq \left\|\nabla\theta_0\right\|_{L^{\infty}}+\sqrt{\frac{\kappa_0^2T_0}{\nu}},
\end{align*}
and the right-hand side is bounded by $M_1$ by choice of $T_0$, closing the inductive argument. To show that the sequence is Cauchy in $X_{T_0}$, it is sufficient to show that
\begin{equation}\label{cauchy}
\|\theta_k-\theta_{k-1}\|_{X_{T_0}}\leq \frac{1}{2^{k-1}},\quad k\geq2.
\end{equation}
To begin, notice that by choice of $T_0$ and bound \eqref{nlabel2} with $\gamma=1$ from Lemma \ref{bdfraclap}, we have, whenever $(t,x)\in[0,T_0]\times\mathbb{R}^d$, 
\begin{align*}
|\theta_2(t,x)-\theta_1(t,x)|&\leq |\lambda|M_1^pT_0+\mu C_{d,\alpha}M_0^{1-2\alpha}M_1^{2\alpha}T_0\leq \kappa_0T_0\leq \frac{1}{4},\\
|\nabla\theta_2(t,x)-\nabla\theta_1(t,x)|&\leq C_d|\lambda|M_1^p\sqrt{\frac{T_0}{\nu}}+\mu C_{d,\alpha}M_0^{1-2\alpha}M_1^{2\alpha}\sqrt{\frac{T_0}{\nu}}\leq\sqrt{\frac{\kappa_0^2T_0}{\nu}}\leq \frac{1}{4},
\end{align*}
meaning $\|\theta_2-\theta_1\|_{X_{T_0}}\leq1/2$. As $|a^p-b^p|\leq p|a-b|(a^{p-1}+b^{p-1})$, for $p\geq1$, $a,b\geq0$, similar calculations yield, whenever $k\geq 3$ and $(t,x)\in[0,T_0]\times\mathbb{R}^d$,
\begin{align*}
|\theta_{k}(t,x)-\theta_{k-1}(t,x)|&\leq \left(2p|\lambda|M_1^{p-1}+\mu C_{d,\alpha}\right)T_0\|\theta_{k-1}-\theta_{k-2}\|_{X_{T_0}}\\
&\leq\kappa_0T_0\|\theta_{k-1}-\theta_{k-2}\|_{X_{T_0}}\leq\frac{1}{4}\|\theta_{k-1}-\theta_{k-2}\|_{X_{T_0}},\\
|\nabla\theta_{k}(t,x)-\nabla\theta_{k-1}(t,x)|&\leq\left(2C_dp|\lambda|M_1^{p-1}+\mu C_{d,\alpha}\right)\sqrt{\frac{T_0}{\nu}}\|\theta_{k-1}-\theta_{k-2}\|_{X_{T_0}},\\
&\leq\sqrt{\frac{\kappa_0^2T_0}{\nu}}\|\theta_{k-1}-\theta_{k-2}\|_{X_{T_0}}\leq\frac{1}{4}\|\theta_{k-1}-\theta_{k-2}\|_{X_{T_0}},
\end{align*}
meaning, 
\[
\|\theta_{k}-\theta_{k-1}\|_{X_{T_0}}\leq\frac{1}{2}\|\theta_{k-1}-\theta_{k-2}\|_{X_{T_0}},\quad k\geq3,
\]
making \eqref{cauchy} true. It follows that $\{\theta_k\}_{k=1}^{\infty}$ converges to some $\theta$ in the norm topology of $X_{T_0}$ and so, by utilizing Lemma \ref{bdfraclap} one more time,
\begin{align*}
\theta(t,x)=&\int_{\mathbb{R}^d}\Psi(t,y)\theta_0(x-y)\ dy+\lambda\int_0^t\int_{\mathbb{R}^d}\Psi(t-s,x-y)\left|\nabla\theta(s,y)\right|^p\ dy\ ds\\
&+\mu\int_0^t\int_{\mathbb{R}^d}\Psi(t-s,x-y)\left(-\Delta\right)^{\alpha}\theta(s,y)\ dy\ ds,\quad (t,x)\in[0,T_0]\times\mathbb{R}^d
\end{align*}
meaning $\theta$ is a strong solution on $[0,T_0]$ corresponding to $\theta_0$, with the extra regularity $C_tC_x^1\left((0,T_0]\times\mathbb{R}^d\right)$. 

It is clear that if $\theta_0$ is periodic with period $L>0$, then so is each $\theta_k(t,\cdot)$, and so the same can be said of the limiting function. We now argue that if $\theta_0\in C_0(\mathbb{R}^d)$, then $\theta_k(t,\cdot)\in C_0^1(\mathbb{R}^d)$ for each fixed $t>0$. Since
\begin{align*}
&|\Psi(t,y)\theta_0(x-y)|\leq \|\theta_0\|_{L^{\infty}}\Psi(t,y)\in L^1(\mathbb{R}^d),\\
&|\nabla\Psi(t,y)\theta_0(x-y)|\leq \|\theta_0\|_{L^{\infty}}|\nabla\Psi(t,y)|\in L^1(\mathbb{R}^d),	
\end{align*}
hold uniformly in $x\in\mathbb{R}^d$, we conclude that $\theta_1(t,\cdot)\in C^1_0(\mathbb{R}^d)$ for $t>0$. Assuming $\theta_{k-1}(t,\cdot)\in C_0^1(\mathbb{R}^d)$, by virtue of the following bounds holding uniformly in $x\in\mathbb{R}^d$, $s\in[0,t]$,
\begin{align*}
&|\Psi(s,y)||\nabla\theta_{k-1}(t-s,x-y)|^p\leq \Psi(s,y)M_1^p\in L^1\left([0,t]\times\mathbb{R}^d\right),\\
&|\nabla\Psi(s,y)||\nabla\theta_{k-1}(t-s,x-y)|^p\leq |\nabla\Psi(s,y)|M_1^p\in L^1\left([0,t]\times\mathbb{R}^d\right),
\end{align*}
we get that
\begin{align*}
&\lim_{|x|\rightarrow\infty}\left|\int_0^t\int_{\mathbb{R}^d}\Psi(s,y)|\nabla\theta_{k-1}(t-s,x-y)|^p\ dy\ ds\right|\\
&=\lim_{|x|\rightarrow\infty}\left|\int_0^t\int_{\mathbb{R}^d}\nabla\Psi(s,y)|\nabla\theta_{k-1}(t-s,x-y)|^p\ dy\ ds\right|=0.
\end{align*}
Similarly, utilizing Lemmas \ref{bdfraclap} and \ref{decayfraclap}, we conclude that
\begin{align*}
&\lim_{|x|\rightarrow\infty}\left|\int_0^t\int_{\mathbb{R}^d}\Psi(s,y)(-\Delta)^{\alpha}\theta_{k-1}(t-s,x-y)\ dy\ ds\right|\\
&=\lim_{|x|\rightarrow\infty}\left|\int_0^t\int_{\mathbb{R}^d}\nabla\Psi(s,y)(-\Delta)^{\alpha}\theta_{k-1}(t-s,x-y)\ dy\ ds\right|=0,
\end{align*}
meaning $\theta_k(t,\cdot)\in C_0^1(\mathbb{R}^d)$ for every $t>0$. By virtue of the convergence in the norm topology of $X_{T_0}$, we must have $\theta(t,\cdot)\in C_0^1\left(\mathbb{R}^d\right)$ whenever $t\in(0,T_0]$.
\end{proof}
\begin{prop}\label{wellposedness}
Let $T_2\geq T_1$, $\theta_0\in W^{1,\infty}(\mathbb{R}^d)$, and suppose $\theta$ is a strong solution corresponding to $\theta_0$ on $[T_1,T_2]$. It follows that
\[
\lim_{t\rightarrow T_1^+}\|\theta(t,\cdot)-\theta_0\|_{W^{1,\infty}}=0.
\]
Furthermore, if $\theta_1\in W^{1,\infty}\left(\mathbb{R}^d\right)$ and $\varphi$ is a strong solution corresponding to $\theta_1$ on $[T_1,T_2]$, then
\[
\left\|\theta(t,\cdot)-\varphi(t,\cdot)\right\|_{W^{1,\infty}}\leq\left\|\theta_0-\theta_1\right\|_{W^{1,\infty}}\gamma(t),\quad t\in[T_1,T_2],
\]
where $\gamma\in C[T_1,T_2]$ is a positive, increasing function depending on $\alpha,p,\nu, \lambda,\mu$ and the $L^{\infty}\left([T_1,T_2];W^{1,\infty}\left(\mathbb{R}^d\right)\right)$ norms of $\theta$ and $\varphi$.
\end{prop}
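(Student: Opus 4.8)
Both assertions are proved by working directly with the Duhamel representation of Definition \ref{strongsol} and estimating in $W^{1,\infty}(\mathbb{R}^d)$, using the heat-kernel identities \eqref{intkern}, \eqref{intgradkern}, the elementary convexity bound $|a^p-b^p|\le p|a-b|(a^{p-1}+b^{p-1})$ for $a,b\ge 0$, and Lemma \ref{bdfraclap} to handle the fractional Laplacian. For the first assertion I would fix $t>T_1$ and write $\theta(t,\cdot)=\Psi(t-T_1)\ast\theta_0+R(t)$, where $\Psi(\tau)\ast f:=\int_{\mathbb{R}^d}\Psi(\tau,\cdot-y)f(y)\,dy$ and $R(t)$ collects the two time-integrals. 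Since a strong solution obeys $\sup_{s\in[T_1,T_2]}\|\theta(s,\cdot)\|_{W^{1,\infty}}<\infty$, Lemma \ref{bdfraclap} bounds $\|(-\Delta)^\alpha\theta(s,\cdot)\|_{L^\infty}$ uniformly in $s$; combining this with the contraction bound $\|\Psi(\tau)\ast f\|_{L^\infty}\le\|f\|_{L^\infty}$ (immediate from \eqref{intkern}) and $\|\nabla\Psi(\tau)\|_{L^1}=C_d(\nu\tau)^{-1/2}$ from \eqref{intgradkern} yields $\|R(t)\|_{L^\infty}\le C(t-T_1)$ and $\|\nabla R(t)\|_{L^\infty}\le C\nu^{-1/2}(t-T_1)^{1/2}$, so $\|R(t)\|_{W^{1,\infty}}\to 0$ as $t\to T_1^+$. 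As $\theta\in C([T_1,T_2];W^{1,\infty})$, the left-hand side converges in $W^{1,\infty}$, hence so does $\Psi(t-T_1)\ast\theta_0$; since $\Psi(\tau)\ast\theta_0\to\theta_0$ uniformly ($\theta_0$ being Lipschitz, hence uniformly continuous) while $\Psi(\tau)\ast\nabla\theta_0\to\nabla\theta_0$ almost everywhere by the Lebesgue differentiation theorem, the $W^{1,\infty}$-limit is forced to be $\theta_0$, giving $\lim_{t\to T_1^+}\|\theta(t,\cdot)-\theta_0\|_{W^{1,\infty}}=0$.

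For the continuous-dependence estimate, set $w:=\theta-\varphi$ and let $M$ bound both $\|\theta\|_{L^\infty([T_1,T_2];W^{1,\infty})}$ and $\|\varphi\|_{L^\infty([T_1,T_2];W^{1,\infty})}$. Subtracting the two Duhamel formulas, the convexity bound gives the pointwise estimate $\big||\nabla\theta(s,\cdot)|^p-|\nabla\varphi(s,\cdot)|^p\big|\le 2pM^{p-1}|\nabla w(s,\cdot)|$, while Lemma \ref{bdfraclap} applied to $w(s,\cdot)$ together with $a^{1-2\alpha}b^{2\alpha}\le a+b$ gives $\|(-\Delta)^\alpha w(s,\cdot)\|_{L^\infty}\le C_{d,\alpha}\|w(s,\cdot)\|_{W^{1,\infty}}$ with $C_{d,\alpha}$ independent of $s$. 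Inserting these into the $L^\infty$ estimate (again via $\|\Psi(\tau)\ast f\|_{L^\infty}\le\|f\|_{L^\infty}$) and into the gradient estimate (via $\|\nabla\Psi(\tau)\|_{L^1}=C_d(\nu\tau)^{-1/2}$) for $w$, and writing $g(t):=\|w(t,\cdot)\|_{W^{1,\infty}}$, one arrives at
\[
g(t)\le c_0\|\theta_0-\theta_1\|_{W^{1,\infty}}+\int_{T_1}^{t}\Big(A+\frac{B}{\sqrt{\nu(t-s)}}\Big)g(s)\,ds,\qquad t\in[T_1,T_2],
\]
where $c_0$ is an absolute constant and $A,B$ depend only on $d,\alpha,p,\lambda,\mu,M$.

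The kernel $f(\tau):=A+B(\nu\tau)^{-1/2}$ belongs to $L^r(0,T_2-T_1)$ for every $r\in[1,2)$; taking $r=3/2$ and $q=3$, and noting that $w\in C([T_1,T_2];W^{1,\infty})$ makes $g$ bounded, hence an element of $L^q(T_1,T_2)$, Lemma \ref{gronwall} applies and produces $g(t)\le\|\theta_0-\theta_1\|_{W^{1,\infty}}\gamma(t)$ with $\gamma\in C[T_1,T_2]$ explicit, positive, increasing, and depending on $d,\alpha,p,\nu,\lambda,\mu$ and $M$. Specializing to $\theta_0=\theta_1$ forces $g\equiv 0$, which is the asserted uniqueness of strong solutions.

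The main obstacle is the gradient estimate for the nonlinear and nonlocal Duhamel terms: each differentiation costs a factor $\|\nabla\Psi(t-s)\|_{L^1}\sim(t-s)^{-1/2}$, so the resulting Gr\"onwall inequality carries a weakly singular, non-$L^\infty$ kernel to which the classical Gr\"onwall lemma does not apply — this is exactly what the generalized inequality of Lemma \ref{gronwall} is designed for, and the only genuinely delicate bookkeeping is choosing conjugate exponents $(q,r)$ compatible with both the $(t-s)^{-1/2}$ singularity of $f$ and the integrability of $g$. A secondary subtlety occurs in the first assertion: one cannot simply invoke $\Psi(\tau)\ast\theta_0\to\theta_0$ in $W^{1,\infty}$, since $\nabla\theta_0$ need not be uniformly continuous, so the limit must be identified via the almost-everywhere convergence of the mollification, using that the limit is already known to exist in $W^{1,\infty}$ because of the regularity built into Definition \ref{strongsol}.
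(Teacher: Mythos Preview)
Your proposal is correct and follows essentially the same route as the paper: the continuous-dependence half is virtually identical (subtract the Duhamel formulas, use $|a^p-b^p|\le p|a-b|(a^{p-1}+b^{p-1})$ and Lemma~\ref{bdfraclap}, sum the $L^\infty$ and gradient estimates, and invoke Lemma~\ref{gronwall} with $q=3$, $r=3/2$). The only minor difference is in identifying the $W^{1,\infty}$ limit at $t=T_1$: you split off the Duhamel remainder and use Lebesgue-point convergence of $\Psi(\tau)\ast\nabla\theta_0$, whereas the paper works directly with the $L^\infty$ limit $g$ of $\nabla\theta(t,\cdot)$ and shows $g=\nabla\theta_0$ by pairing with compactly supported test functions and integrating by parts---both arguments exploit the same key point, namely that the $W^{1,\infty}$ limit is already known to exist by Definition~\ref{strongsol}.
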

\begin{proof}
From the uniform continuity of $\theta_0$, it is clear that 
\begin{equation}\label{convtheta}
\lim_{t\rightarrow T_1^+}\|\theta(t,\cdot)-\theta_0\|_{L^{\infty}}=0,
\end{equation}
and so it remains to show that 
\[
\lim_{t\rightarrow T_1^+}\|\nabla\theta(t,\cdot)-\nabla\theta_0\|_{L^{\infty}}=0.
\]
To do so, first of all notice that as $\theta\in C([T_1,T_2];W^{1,\infty}(\mathbb{R}^d))$, $\nabla\theta(t,x)$ converges to some vector $g(x)$ as $t\rightarrow T_1^+$ in the norm topology of $L^{\infty}(\mathbb{R}^d)$, and all what is needed is to show that $g(x)=\nabla\theta_0(x)$ for almost every $x\in\mathbb{R}^d$. This can be done as follows: let $x_0\in\mathbb{R}^d$ and $R>0$ be arbitrary, and let $\chi_R$ be a smooth function compactly supported in a ball of radius $R$ centered at $x_0$. Then we must have 
\begin{align*}
&\left|\int_{|x-x_0|\leq R}\left(g(x)-\nabla\theta_0(x)\right)\chi_R(x)dx\right|=\lim_{t\rightarrow T_1^+}\left|\int_{|x-x_0|\leq R}\left(\nabla\theta(t,x)-\nabla\theta_0(x)\right)\chi_R(x)dx\right|\\
&=\lim_{t\rightarrow T_1^+}\left|\int_{|x-x_0|\leq R}\left(\theta(t,x)-\theta_0(x)\right)\nabla\chi_R(x)dx\right|=0,
\end{align*}
owing to \eqref{convtheta}, and the fact that $\chi_R$ is compactly supported.

Now, let $w(t,x):=\theta(t,x)-\varphi(t,x)$, and notice that
\begin{align}\label{diffeq}
w(t,x)=&\int_{\mathbb{R}^d}\Psi(t-T_1,y)w_0(x-y)dy+\mu\int_{T_1}^t\int_{\mathbb{R}^d}\Psi(t-s,x-y)\left(-\Delta\right)^{\alpha}w(s,y)dyds\nonumber\\
&+\lambda\int_{T_1}^t\int_{\mathbb{R}^d}\Psi(t-s,x-y)\left(\left|\nabla\theta(s,y)\right|^p-\left|\nabla\varphi(s,y)\right|^p\right)dyds.
\end{align}
As $|a^p-b^p|\leq p|a-b|(a^{p-1}+b^{p-1})$ when $p\in[1,\infty)$, we see that
\begin{align}\label{estpoint}
|w(t,x)|\leq& \|w_0\|_{L^{\infty}}+\mu C_{d,\alpha}\int_{T_1}^t\left\|w(s,\cdot)\right\|_{L^{\infty}}^{1-2\alpha}\left\|\nabla w(s,\cdot)\right\|^{2\alpha}_{L^{\infty}}\ ds\nonumber\\
&+p|\lambda|\max_{s\in[T_1,T_2]}\left[\|\nabla\theta(s,\cdot)\|^{p-1}_{L^{\infty}}+\|\nabla\varphi(s,\cdot)\|^{p-1}_{L^{\infty}}\right]\int_{T_1}^t\left\|\nabla w(s,\cdot)\right\|_{L^{\infty}}ds\nonumber\\
\leq &\|w_0\|_{L^{\infty}}+A\int_{T_1}^t\|w(s,\cdot)\|_{W^{1,\infty}}ds,
\end{align}
where 
\[
A:=2\left(\mu C_{d,\alpha} +p|\lambda|\max_{s\in[T_1,T_2]}\left(\left\|\nabla \theta(s,\cdot)\right\|^{p-1}_{L^{\infty}}+\left\|\nabla \varphi(s,\cdot)\right\|^{p-1}_{L^{\infty}}\right)\right).
\]
Similarly, applying $\nabla$ to \eqref{diffeq} and using \eqref{intgradkern}, we obtain
\begin{align}\label{estgrad}
|\nabla w(t,x)|\leq& \|\nabla w_0\|_{L^{\infty}}+\frac{A}{2\sqrt\nu}\int_{T_1}^t(t-s)^{-1/2}\left\|w(s,\cdot)\right\|_{L^{\infty}}^{1-2\alpha}\left\|\nabla w(s,\cdot)\right\|^{2\alpha}_{L^{\infty}}ds\nonumber\\
&+\frac{A}{2\sqrt\nu}\int_{T_1}^t(t-s)^{-1/2}\left\|\nabla w(s,\cdot)\right\|_{L^{\infty}}ds\nonumber\\
&\leq \|\nabla w_0\|_{L^{\infty}}+A\nu^{-1/2}\int_{T_1}^t(t-s)^{-1/2}\left\| w(s,\cdot)\right\|_{W^{1,\infty}}ds.	
\end{align}
Adding inequalities \eqref{estpoint} and \eqref{estgrad}, while setting $g(t):=\left\|w(t,\cdot)\right\|_{W^{1,\infty}\left(\mathbb{R}^d\right)}$ and $f(\sigma):=A\left((\nu \sigma)^{-1/2}+1\right)$ we  obtain, for every $t\in[T_1,T_2]$,
\[
g(t)\leq \int_{T_1}^tf(t-s)g(s)\ ds+\left\|w_0\right\|_{W^{1,\infty}}.
\]
The result now follows from Lemma \ref{gronwall}, by choosing, for instance, $q=3,r=3/2$.
\end{proof}
\begin{rem}\label{imprmk}
A direct consequence of Propositions \ref{construct} and \ref{wellposedness} is that if $\theta_0$ is periodic or vanishes at infinity, and if $\theta$ is the strong solution corresponding to $\theta_0$ on $[T_1,T_2]$, then $\theta(t,\cdot)$ is periodic or vanishes at infinity if $\theta_0$ is. 	
\end{rem}

To avoid cumbersome notation, in Proposition \ref{parabolic} we work with strong solutions posed on $[0,T]$, without any loss in generality. Further, we write $a\lesssim b$ whenever there exists a constant $C>0$, depending (possibly nonlinearly) on $d,\alpha,p,\beta,\nu,\mu,\lambda$ and 
\[
\max_{t\in[0,T]}\|\theta(t,\cdot)\|_{W^{1,\infty}},
\]
with $\beta\in(0,1)$ such that $a\leq Cb$ uniformly in $t\in[0,T]$. This notation is only used in the proof of Proposition \ref{parabolic}.
\begin{prop}\label{parabolic}
Let $T>0$ and suppose $\theta$ is the strong solution on $[0,T]$ corresponding to some $\theta_0\in W^{1,\infty}$. It follows that
\begin{equation}\label{holdnonlin}
\left|\nabla\theta(t,x)-\nabla\theta(t,z)\right|\lesssim \left(t^{-1/2(1+\beta)}+t^{1/2(1-\beta)}\right)|x-z|^\beta,
\end{equation}
where $\beta\in(0,1)$, $t\in(0,T]$ are arbitrary. Consequently, we get that $\partial_t\theta\in L^1\left([0,T];L^{\infty}(\mathbb{R}^d)\right)$, $\theta\in C_t^1C_x^2\left((0,T]\times\mathbb{R}^d\right)$, and 

\[
\partial_t\theta(t,x)-\nu\Delta\theta(t,x)-\lambda|\nabla\theta(t,x)|^p-\mu(-\Delta)^{\alpha}\theta(t,x)=0,
\]
holds true in the classical (pointwise) sense $\forall (t,x)\in(0,T]\times\mathbb{R}^d$.
\end{prop}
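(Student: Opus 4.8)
The plan is to obtain the Hölder estimate \eqref{holdnonlin} directly from the Duhamel representation in Definition \ref{strongsol}, then bootstrap. Write $\nabla\theta(t,x)-\nabla\theta(t,z)$ using the integral formula: the first term is $\int_{\mathbb{R}^d}\bigl(\nabla\Psi(t,x-y)-\nabla\Psi(t,z-y)\bigr)\theta_0(y)\,dy$, which after integrating by parts becomes $\int_{\mathbb{R}^d}\bigl(\Psi(t,x-y)-\Psi(t,z-y)\bigr)\nabla\theta_0(y)\,dy$; this is controlled by $\|\nabla\theta_0\|_{L^\infty}$ times the modulus of continuity of the heat semigroup acting on a bounded function, giving a bound $\lesssim \min\{1,\,t^{-1/2}|x-z|\}\lesssim t^{-\beta/2}|x-z|^\beta$ for any $\beta\in(0,1)$ by interpolating between the trivial bound and \eqref{intgradkern}. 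For the two Duhamel integrals, I would differentiate under the integral in the space variable and use \eqref{holdkern}: both the nonlinear term $|\nabla\theta(s,y)|^p$ and the nonlocal term $(-\Delta)^\alpha\theta(s,y)$ are bounded in $L^\infty$ uniformly in $s\in[0,T]$ — the former by $\max_t\|\nabla\theta(t,\cdot)\|_{L^\infty}^p$ and the latter by Lemma \ref{bdfraclap} — so each integral is dominated by a constant times $\int_0^t (\nu(t-s))^{-\frac12(1+\beta)}|x-z|^\beta\,ds\lesssim t^{\frac12(1-\beta)}|x-z|^\beta$, since $\frac12(1+\beta)<1$. Adding the three contributions yields \eqref{holdnonlin}.

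Once \eqref{holdnonlin} is in hand, I would upgrade to $C^2_x$ regularity of $\theta$ on $(0,T]$ by a second application of the Duhamel formula. The point is that the source terms $|\nabla\theta(s,\cdot)|^p$ and $(-\Delta)^\alpha\theta(s,\cdot)$ are now known to be Hölder continuous in space, uniformly for $s$ in compact subsets of $(0,T]$: for the nonlinear term this follows from \eqref{holdnonlin} together with the local Lipschitz bound $|a^p-b^p|\le p|a-b|(a^{p-1}+b^{p-1})$, and for the fractional Laplacian term it follows from the continuity estimate quoted in the introduction (if $\theta(s,\cdot)\in C^{0,\beta}$ with $2\alpha<\beta$ then $(-\Delta)^\alpha\theta(s,\cdot)\in C^{0,\beta-2\alpha}$, applicable since $\beta$ can be chosen arbitrarily close to $1$ and $\alpha<1/2$). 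Standard parabolic (heat-kernel) Schauder-type estimates then give that the Duhamel integrals are $C^2$ in $x$ on $(0,T]$, and likewise the linear term $e^{t\nu\Delta}\theta_0$ is smooth in $x$ for $t>0$; hence $\theta(t,\cdot)\in C^2(\mathbb{R}^d)$ for each $t\in(0,T]$, with the second derivatives controlled by a negative power of $t$.

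To get the time regularity and the pointwise PDE, I would use the integral equation to compute $\partial_t\theta$. Differentiating the Duhamel formula in $t$ produces the term $\nu\Delta$ applied to the first term, the "diagonal" contributions $\lambda|\nabla\theta(t,x)|^p$ and $\mu(-\Delta)^\alpha\theta(t,x)$ from the upper limit of the time integrals, plus $\nu\Delta$ applied to each Duhamel integral; collecting these shows $\partial_t\theta-\nu\Delta\theta-\lambda|\nabla\theta|^p-\mu(-\Delta)^\alpha\theta=0$ pointwise on $(0,T]\times\mathbb{R}^d$ provided all terms are continuous there, which the previous step guarantees. The $L^1_tL^\infty_x$ bound on $\partial_t\theta$ then follows by reading off the PDE: $\|\partial_t\theta(t,\cdot)\|_{L^\infty}\le \nu\|\Delta\theta(t,\cdot)\|_{L^\infty}+|\lambda|\|\nabla\theta(t,\cdot)\|_{L^\infty}^p+\mu\|(-\Delta)^\alpha\theta(t,\cdot)\|_{L^\infty}$, where the middle and last terms are bounded uniformly on $[0,T]$ and the first is $\lesssim t^{-\frac12(1+\beta)}+t^{\frac12(1-\beta)}$ by \eqref{holdnonlin} (taking a difference quotient of $\nabla\theta$ and letting the increment vanish, or more carefully by differentiating \eqref{holdnonlin}-type bounds once more) — a function integrable on $[0,T]$ since $\frac12(1+\beta)<1$. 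Continuity of $\partial_t\theta$ in $t$ on $(0,T]$ comes from continuity of each term on the right of the PDE.

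The main obstacle I expect is the passage from the first-order Hölder estimate \eqref{holdnonlin} to genuine $C^2_x$ control with quantitative (integrable-in-time) bounds on $\|\Delta\theta(t,\cdot)\|_{L^\infty}$: this requires differentiating the Duhamel integrals twice in space against $\nabla^2\Psi$, whose spatial $L^1$-in-$y$ norm scales like $(\nu s)^{-1}$ and is \emph{not} integrable near $s=t$ on its own, so one genuinely needs to exploit the spatial Hölder continuity of the source terms (writing $\nabla^2\Psi$ hitting the difference of the source at $y$ and at $x$, and using the cancellation $\int\nabla^2\Psi(s,x-y)\,dy=0$) to convert the $(\nu s)^{-1}$ singularity into $(\nu s)^{-1+\beta/2}$, which is integrable. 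This is the standard parabolic Schauder mechanism, carried out here pointwise in the spirit of \cite{BAMJL2000}; the nonlocal term is handled identically once its Hölder continuity is granted by \cite{Silvestre2007}, and no new difficulty arises from the power $p$.
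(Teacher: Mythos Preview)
Your proposal is correct and follows essentially the same route as the paper: derive \eqref{holdnonlin} from the Duhamel representation via \eqref{holdkern} and Lemma \ref{bdfraclap}, then use the resulting spatial H\"older continuity of the sources $|\nabla\theta|^p$ and $(-\Delta)^\alpha\theta$ together with the standard Schauder mechanism (cancellation of $\int\nabla^2\Psi$ or $\int\partial_t\Psi$) to obtain $C^1_tC^2_x$ regularity and the pointwise PDE. The only organizational difference is that the paper bounds $\|\partial_t\theta(t,\cdot)\|_{L^\infty}$ \emph{directly} from a time-shifted Duhamel formula starting at $t_0=t/2$ (invoking uniqueness from Proposition \ref{wellposedness} to restart), obtaining $\|\partial_t\theta(2t_0,\cdot)\|_{L^\infty}\lesssim t_0^{-1/2}$, whereas you propose to bound $\|\Delta\theta(t,\cdot)\|_{L^\infty}$ first and then read off $\partial_t\theta$ from the classical equation; both lead to the same integrable-in-time singularity. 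The paper uses its own Lemma \ref{fraclapmod} for the H\"older continuity of $(-\Delta)^\alpha\theta$ rather than the \cite{Silvestre2007} estimate you cite, but these are interchangeable here.

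One caveat: your parenthetical ``taking a difference quotient of $\nabla\theta$ and letting the increment vanish'' cannot yield a finite bound on $\|\Delta\theta(t,\cdot)\|_{L^\infty}$, since \eqref{holdnonlin} only gives a $C^{0,\beta}$ estimate on $\nabla\theta$ with $\beta<1$ and the quotient blows up as the increment shrinks. The correct argument is precisely the one you spell out in your final paragraph (the $\nabla^2\Psi$ cancellation against H\"older sources), and carrying it out gives $\|\Delta\theta(t,\cdot)\|_{L^\infty}\lesssim t^{-1/2}$ rather than $t^{-(1+\beta)/2}$; either way the bound is integrable on $[0,T]$, so your conclusion stands.
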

\begin{proof}
We have 
\begin{align*}
\nabla\theta(t,x)=&\int_{\mathbb{R}^d}\nabla\Psi(t,x-y)\theta_0(y)dy+\lambda\int_{0}^t\int_{\mathbb{R}^d}\nabla\Psi(t-s,x-y)\left|\nabla\theta(s,y)\right|^pdyds\\
&+\mu\int_{0}^t\int_{\mathbb{R}^d}\nabla\Psi(t-s,x-y)\left(-\Delta\right)^{\alpha}\theta(s,y)\ dy\ ds,
\end{align*}
and so \eqref{holdnonlin} follows from estimate \eqref{holdkern}, bound \eqref{nlabel2} with $\gamma=1$ from Lemma \ref{bdfraclap} and straightforward bounds.

To prove that $\partial_t\theta\in L^1\left([0,T];L^{\infty}(\mathbb{R}^d)\right)$, it is sufficient to show, for any $t_0\in[\epsilon,T/2]$,
\begin{equation}\label{dervttheta}
\|\partial_t\theta(2t_0,\cdot)\|_{L^{\infty}}\lesssim t_0^{-1/2}+1,
\end{equation}
with $\epsilon>0$ being arbitrarily small. First of all, notice that by virtue of Proposition \ref{wellposedness}, we must have, for any $t\in[t_0,T]$,
\[
\theta(t,x)=\varphi_0(t,x)+\varphi_1(t,x),
\]
where
\begin{align}
&\varphi_0(t,x):=\int_{\mathbb{R}^d}\Psi(t-t_0,x-y)\theta(t_0,y)\ dy,\label{defphi0}\\
&\varphi_1(t,x):=\mu\int_{t_0}^t\int_{\mathbb{R}^d}\Psi(t-s,x-y)\left(-\Delta\right)^{\alpha}\theta(s,y)\ dy\ ds\nonumber\\
&\qquad \qquad \quad +\lambda\int_{t_0}^t\int_{\mathbb{R}^d}\Psi(t-s,x-y)\left|\nabla\theta(s,y)\right|^pdy\ ds. \label{defphi1}
\end{align}
Differentiating $\varphi_0$ once in time, integrating by parts, bounding $|\nabla\theta(t_0,y)|\leq\|\nabla\theta(t_0,\cdot)\|_{L^{\infty}}$ and using \eqref{intgradkern} we see that, whenever $t\in(t_0,T]$,
\begin{equation}\label{dervphi0}
\|\partial_t\varphi_0(t,\cdot)\|_{L^{\infty}}\lesssim (t-t_0)^{-1/2}.
\end{equation}
As $\theta(t,\cdot)$ is Lipschitz, owing to Lemma \ref{fraclapmod}, we must have that 
\begin{equation}\label{frachold}
|(-\Delta)^{\alpha}\theta(s,x)-(-\Delta)^{\alpha}\theta(s,y)|\lesssim |x-y|^{1-2\alpha},
\end{equation}
whenever $s\in[0,T]$, $(x,y)\in\mathbb{R}^d$, while estimate \eqref{holdnonlin}, along with $|a^p-b^p|\leq p(a^{p-1}+b^{p-1})|a-b|$ tells us that
\begin{equation}\label{holdnonlin1}
\left||\nabla\theta(s,y)|^p-|\nabla\theta(s,x)|^p\right|\lesssim|x-y|^{\beta}\left(s^{-\frac{1}{2}(1+\beta)}+s^{1/2(1-\beta)}\right)\lesssim|x-y|^{\beta}\left(t_0^{-\frac{1}{2}(1+\beta)}+1\right),
\end{equation} 
whenever $s\in[t_0,T]$, $(x,y)\in\mathbb{R}^d$, and $\beta\in(0,1)$. H\"older estimates \eqref{frachold}-\eqref{holdnonlin1} allow us to differentiate the volume potentials \eqref{defphi1} once in time (see for instance \cite{Friedmanbook1964}) to get
\begin{align*}
\partial_t\varphi_1(t,x)=&\mu\left(-\Delta\right)^{\alpha}\theta(t,x)+\lambda\left|\nabla\theta(t,x)\right|^p\\
+&\mu\int_{t_0}^t\int_{\mathbb{R}^d}\partial_t\Psi(t-s,x-y)\left(\left(-\Delta\right)^{\alpha}\theta(s,y)-\left(-\Delta\right)^{\alpha}\theta(s,x)\right)dy\ ds\\
&+\lambda\int_{t_0}^t\int_{\mathbb{R}^d}\partial_t\Psi(t-s,x-y)\left(\left|\nabla\theta(s,y)\right|^p-\left|\nabla\theta(s,x)\right|^p\right)dy\ ds.
\end{align*}
Choosing $\beta=1-2\alpha$, bounding from above, and utilizing \eqref{intdtkern} we get
\begin{equation}\label{dervphi1}
\begin{aligned}
|\partial_t\varphi_1(t,x)|&\lesssim \left(t_0^{\alpha-1}+1\right)\int_{t_0}^t\int_{\mathbb{R}^d}|x-y|^{1-2\alpha}|\partial_t\Psi(t-s,x-y)|dy\ ds+1\\
&\lesssim\left(t_0^{\alpha-1}+1\right)(t-t_0)^{\frac{1}{2}(1-2\alpha)}+1.
\end{aligned}
\end{equation}
From \eqref{dervphi0} and \eqref{dervphi1}, we obtain \eqref{dervttheta}. It is clear that $\varphi_0$ is smooth and solves the homogenous heat equation on $[t_0,T]\times\mathbb{R}^d$, while the H\"older estimates \eqref{frachold}-\eqref{holdnonlin1} allow us to differentiate the volume potentials \eqref{defphi1} twice in space to conclude that $\varphi_1\in C_t^1C_x^2\left([t_0,T]\times\mathbb{R}^d\right)$ and
\[
\partial_t\varphi_1(t,x)-\nu\Delta\varphi_1(t,x)=\mu\left(-\Delta\right)^{\alpha}\theta(t,x)+\lambda|\nabla\theta(t,x)|^p,
\]
holds true in the pointwise 	sense on $[t_0,T]\times\mathbb{R}^d$, with $t_0\geq\epsilon>0$ being arbitrary small.
\end{proof}

\begin{prop}\label{criteria}
	Suppose $\theta$ is the strong solution on $[T_1,T_2]$ corresponding to some $\theta_0\in W^{1,\infty}$. If 
	\[
	T_*:=\sup\left\{T\geq T_1:\|\theta(t,\cdot)\|_{W^{1,\infty}\left(\mathbb{R}^d\right)}<\infty,\ \forall t\in[T_1,T]\right\},
	\]
	is the maximal interval of existence of the strong solution, then 
	\[
	T_*=\sup\left\{T\geq T_1:\|\nabla\theta(t,\cdot)\|_{L^{\infty}}<\infty,\ \forall t\in[T_1,T]\right\}.
	\] 
\end{prop}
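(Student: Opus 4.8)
The plan is to prove this blow-up criterion by contradiction, exploiting the fact that the local existence time $T_0$ furnished by Proposition \ref{construct} depends on the initial data \emph{only through} $\|\theta_0\|_{W^{1,\infty}}$, together with the propagation of the $L^\infty$ bound. One inclusion is immediate: if $\|\theta(t,\cdot)\|_{W^{1,\infty}}<\infty$ on $[T_1,T]$ then certainly $\|\nabla\theta(t,\cdot)\|_{L^{\infty}}<\infty$ there, so the supremum in the statement is at least $T_*$. The real content is the reverse: if the gradient stays bounded on $[T_1,T]$, then so does the full $W^{1,\infty}$ norm, and the solution extends past $T$.

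First I would record that control of $\|\nabla\theta(t,\cdot)\|_{L^\infty}$ on $[T_1,T]$ automatically yields control of $\|\theta(t,\cdot)\|_{L^\infty}$ there. Indeed, from the mild formulation in Definition \ref{strongsol}, using $\int\Psi=1$ (equation \eqref{intkern}) and Lemma \ref{bdfraclap} to bound $\|(-\Delta)^\alpha\theta(s,\cdot)\|_{L^\infty}\le C_{d,\alpha}\|\theta(s,\cdot)\|_{L^\infty}^{1-2\alpha}\|\nabla\theta(s,\cdot)\|_{L^\infty}^{2\alpha}$, one gets
\[
\|\theta(t,\cdot)\|_{L^\infty}\le \|\theta_0\|_{L^\infty}+|\lambda|\!\int_{T_1}^t\!\|\nabla\theta(s,\cdot)\|_{L^\infty}^p\,ds+\mu C_{d,\alpha}\!\int_{T_1}^t\!\|\theta(s,\cdot)\|_{L^\infty}^{1-2\alpha}\|\nabla\theta(s,\cdot)\|_{L^\infty}^{2\alpha}\,ds,
\]
and since $\sup_{[T_1,T]}\|\nabla\theta(s,\cdot)\|_{L^\infty}=:N<\infty$ by hypothesis, Young's inequality on the last term (splitting $\|\theta\|_{L^\infty}^{1-2\alpha}\le (1-2\alpha)\|\theta\|_{L^\infty}+2\alpha$) plus Gr\"onwall gives $\sup_{[T_1,T]}\|\theta(t,\cdot)\|_{L^\infty}\le K<\infty$ with $K=K(N,T,\|\theta_0\|_{L^\infty},\dots)$. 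Hence $M:=\sup_{t\in[T_1,T)}\|\theta(t,\cdot)\|_{W^{1,\infty}}<\infty$, so $T_*\ge \sup\{T\ge T_1:\|\nabla\theta(t,\cdot)\|_{L^\infty}<\infty\ \forall t\in[T_1,T]\}$; combined with the trivial inclusion, this would finish the proof \emph{provided} the solution actually continues past such a $T$.

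For the continuation, I would argue as follows. Suppose, for contradiction, that $\|\nabla\theta(t,\cdot)\|_{L^\infty}$ stays bounded on $[T_1,T_*)$ but $T_*<\infty$ is the maximal time. By the previous step, $M:=\sup_{t\in[T_1,T_*)}\|\theta(t,\cdot)\|_{W^{1,\infty}}<\infty$. Pick any $t_1\in[T_1,T_*)$ with $T_*-t_1$ small; by Proposition \ref{wellposedness} (uniform continuity / the mild formula restarted at $t_1$) the solution $\theta(t_1,\cdot)\in W^{1,\infty}$ serves as new initial data, and Proposition \ref{construct} produces a strong solution on $[t_1,t_1+T_0]$ where $T_0=T_0(\|\theta(t_1,\cdot)\|_{W^{1,\infty}},d,\alpha,p,\nu,\mu,\lambda)$ — crucially, examining the proof of Proposition \ref{construct}, $T_0$ depends on $\|\theta(t_1,\cdot)\|_{W^{1,\infty}}$ only through the quantities $M_0,M_1,\kappa_0$, hence is bounded below by a positive constant $\tau=\tau(M,\dots)>0$ uniformly for all such $t_1$. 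By the uniqueness part of Theorem \ref{shortimeexis} (Proposition \ref{wellposedness}), this new solution agrees with $\theta$ on the overlap and therefore extends $\theta$ to $[T_1,t_1+\tau]$; choosing $t_1$ with $T_*-t_1<\tau/2$ gives a strong solution on $[T_1,T_*+\tau/2]$, contradicting maximality of $T_*$.

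The main obstacle — and the one point requiring care rather than routine bookkeeping — is making rigorous the claim that the local existence time $\tau$ depends on the restart data $\theta(t_1,\cdot)$ \emph{only} through an upper bound on its $W^{1,\infty}$ norm, uniformly in $t_1$; this is exactly what the explicit formulas $M_0=1+\|\theta_0\|_{L^\infty}$, $M_1=1+\|\nabla\theta_0\|_{L^\infty}$, $\kappa_0=C_{d,\alpha}(2p|\lambda|M_1^p+\mu M_0^{1-2\alpha}M_1^{2\alpha})$ and $T_0=\tfrac1{16}\min\{\nu\kappa_0^{-2},\kappa_0^{-1}\}$ in the proof of Proposition \ref{construct} deliver, since these are monotone in $\|\theta_0\|_{W^{1,\infty}}$. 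A secondary, purely technical point is that one must invoke uniqueness (already established) to patch the restarted solution to the original one so that the extension is genuinely a strong solution on the whole enlarged interval, rather than merely on the tail.
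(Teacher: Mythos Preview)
Your proof is correct and its core step---bounding $\|\theta(t,\cdot)\|_{L^\infty}$ from the mild formulation via Lemma~\ref{bdfraclap}, the elementary inequality $a^{1-2\alpha}\le(1-2\alpha)a+2\alpha$, and Gr\"onwall---is exactly the paper's argument. The continuation step you append (restarting at $t_1$ near $T_*$ using the explicit lower bound on $T_0$ from Proposition~\ref{construct}) is correct but not strictly needed for the statement as written: once bounded gradient forces bounded $W^{1,\infty}$ norm, the equality of the two suprema is immediate, and the paper simply takes the standard continuation machinery behind ``maximal interval of existence'' as understood.
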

\begin{proof}
Set 
\begin{align*}
&T_0:=\sup\left\{T\geq T_1:\|\theta(t,\cdot)\|_{L^{\infty}}<\infty, \ \forall t\in[T_1,T]\right\},\\
&\widetilde T_0:=\sup\left\{T\geq T_1:\|\nabla\theta(t,\cdot)\|_{L^{\infty}}<\infty, \ \forall t\in[T_1,T]\right\},
\end{align*}
and suppose for the sake of contradiction that $T_0<\widetilde T_0$. We must have 
\[
\limsup_{t\rightarrow T_0^{-}}\|\theta(t,\cdot)\|_{L^{\infty}}=\infty,
\]
while for $t\in[T_1,T_0)$,
\begin{align*}
\theta(t,x)=&\int_{\mathbb{R}^d}\Psi(t-T_1,y)\theta_0(x-y)\ dy+\lambda\int_{T_1}^t\int_{\mathbb{R}^d}\Psi(t-s,x-y)\left|\nabla\theta(s,y)\right|^pdyds\\
&+\mu\int_{T_1}^t\int_{\mathbb{R}^d}\Psi(t-s,x-y)\left(-\Delta\right)^{\alpha}\theta(s,y)\ dy\ ds.
\end{align*}
Setting 
\[
A:=\max_{t\in [T_1,T_0]}\left\|\nabla\theta(t,\cdot)\right\|_{L^{\infty}}<\infty,
\] 
and bounding from above, while utilizing Lemma \ref{bdfraclap}, we get, whenever $t\in[T_1,T_0)$,
\[
\left\|\theta(t,\cdot)\right\|_{L^{\infty}}\leq \|\theta_0\|_{L^{\infty}}+|\lambda|A^p(t-T_1)+\mu C_{d,\alpha}A^{2\alpha}\int_{T_1}^t\left\|\theta(s,\cdot)\right\|_{L^{\infty}}^{1-2\alpha}\ ds.
\]
As $1-2\alpha\in(0,1)$, we must have
\[
\left\|\theta(s,\cdot)\right\|_{L^{\infty}}^{1-2\alpha}\leq \left(1-2\alpha\right)\left\|\theta(s,\cdot)\right\|_{L^{\infty}}+2\alpha,
\]
allowing us to conclude the proof by Gronwall's inequality.	
\end{proof}

We conclude this section with a few comments. For starters, to prove an analogous result when $\alpha\in[1/2,1)$, we can use the same technique. One way of taking care of things is by requiring the initial data to be in $W^{2,\infty}(\rd)$ and find a fixed point in the space $X_T:=C\left([0,T];W^{2,\infty}(\mathbb{R}^d)\right)$ while utilizing bound \eqref{nlabel3} instead of \eqref{nlabel2} in Lemma \ref{bdfraclap}. The only part of the proof that has to be significantly changed is the regularity criterion (Proposition \ref{criteria}), and we could instead specialize to the periodic or whole space setting separately and work with energy estimates rather than pointwise.

The requirement that $\theta_0\in W^{1,\infty}(\rd)$ is not optimal: from Lemma \ref{bdfraclap} and the above proof, it shouldn't be too hard to work with $\theta_0\in L^{\infty}\cap C^{0,\gamma}$ with $\gamma\in(2\alpha,1]$. Of course we have to appropriately modify the definition of ``strong solutions'' along with the proof of Proposition \ref{construct} in order to make sense of the nonlinearity. In fact, using heat kernel properties, one can show that $\|\nabla\theta(t,\cdot)\|_{L^{\infty}}\lesssim t^{(\gamma-1)/2}[\theta_0]_{C^{0,\gamma}}$ for $t$ close to 0. We only need to make sure that the initial data has sufficiently high regularity to treat the nonlinear equation as a perturbation of the heat equation. If a nonlinear evolution equation is invariant under some scaling, a general rule of thumb is that one should expect a good local well-posedness theory in spaces with norms that are invariant (critical) or are monotone (subcritical) with respect to the scaling. Since equation \eqref{maineq} with $\mu=0$ does have a scale invariance, it is natural to expect a good local well-posedness theory in spaces that respect such invariance. Indeed, the nonlocal term is linear and of order less than dissipation, so it is not expected to dramatically change the local theory. We do not pursue that direction here.
\section{Proof of Theorem \ref{mainthm}}\label{proofmainthm}
\subsection{Strategy of the Proof.} 
By Theorem \ref{shortimeexis} and Proposition \ref{criteria}, we only need to show that $\|\nabla\theta(t,\cdot)\|_{L^{\infty}}<\infty$ for any $t\geq0$. This will be achieved by constructing a time-dependent strong modulus of continuity (an $\Omega(t,\xi)$ such that $\Omega(t,\cdot)$ is a strong modulus of continuity for any $t\geq0$ according to Definition \ref{defmod}) such that $\theta(t,\cdot)$ has $\Omega(t,\cdot)$ as a strict modulus of continuity for all $t\geq0$, thereby ruling out the gradient blowup scenario. As a byproduct, we are able to obtain an explicit bound on the gradient in terms of $\partial_\xi\Omega(t,0)$. 

 Time-dependent moduli of continuity have been studied before in \cite{Kiselev2011}, mainly in the context of eventual regularization of active scalars. H\"older time-dependent moduli of continuity were also considered in \cite{SV2012}, where a drift-diffusion equation with a pressure term was considered, and the solution is shown to remain H\"older continuous as long as the drift velocity is under control. Following (and slightly generalizing) the work in \cite{Kiselev2011}, the time-dependent modulus $\Omega(t,\xi)$ will be constructed such that the initial data has strict modulus of continuity $\Omega(0,\cdot)$ and 
\begin{equation}\label{heqmod}
\partial_t\Omega(t,\xi)-4\nu\partial^2_\xi\Omega(t,\xi)>h(t,\xi),\quad (t,\xi)\in(0,\infty)\times \mathbb{R}^+,
\end{equation}
where $h$ will represent any ``local'' instabilities that may arise from the nonlocal and nonlinear part of the equation, that may depend linearly, nonlinearly or nonlocally on $\Omega$.

As will be shown below, the nonlinear term will not be of any concern and will in fact vanish (as is expected when trying to prove ``maximum principles''); we only need to worry about the nonlocal term, and as is easily observed for the linear equation (that is, equation \eqref{maineq} with $\lambda=0$), this will cause at most exponential growth in time, but not blowup. As opposed to \cite{Kiselev2011}, where $h(t,\xi)$ is a ``nonlocal'' Burgers type nonlinear term, in our case, since the nonlinearity will vanish, $h(t,\xi)$ is a linear term in $\Omega$, allowing us to solve the ``heat inequality'' \eqref{heqmod} by a simple separation of variables, i.e. we seek a modulus of continuity of the form
\[
\Omega(t,\xi)=f(t)\omega(\xi).
\]
In our case,
\[
h(t,\xi)=C_{d,\alpha}\int_0^\xi\frac{\partial_\eta\Omega(t,\eta)}{\eta^{2\alpha}}\ d\eta,
\]
and owing to the fact that
\[
\lim_{\xi\rightarrow0^+}\partial^2_{\xi}\Omega(t,\xi)=-\infty,
\]
we see that the local dissipation from the Laplacian (the term $-4\nu\partial^2_\xi\Omega$) will balance out $h$ when $\xi$ is small. Time dependence on the other hand is necessary, since the above integral cannot be made to vanish as $\xi\rightarrow\infty$, while local dissipation from the Laplacian must go to 0 as $\xi\rightarrow\infty$, otherwise the modulus of continuity will become negative for large $\xi$. Therefore, we need to rely on the time derivative $\partial_t\Omega$ to balance out those instabilities when $\xi$ is large. This will be clear in \S\ref{constsec} below.

Before constructing the modulus of continuity, let us recall the main ideas introduced in \cite{KNS2008,KNV2007} and slightly modify them in order to be applicable for problem \eqref{maineq}. Let us suppose that $\Omega(t,\cdot)$ is an unbounded strong modulus of continuity for each $t\geq0$, and assume that $\theta_0$ has $\Omega(0,\xi)$ as a strict modulus of continuity. Furthermore, suppose that $\Omega\in C\left([0,\infty)\times[0,\infty)\right)$, and that $\Omega(\cdot,\xi)$ is non-decreasing as a function of time for each $\xi\in[0,\infty)$. Let us now define
\begin{align}
&T_*:=\sup\left\{T\geq 0:\|\theta(t,\cdot)\|_{W^{1,\infty}}<\infty, \ \forall t\in[0,T]\right\},\label{defmaxT}\\
&\tau:=\sup\left\{T\geq0:|\theta(t,x)-\theta(t,y)|<\Omega\left(t,|x-y|\right),\ \forall t\in[0,T],\ x\neq y\right\}\label{deftau},
\end{align}
where $x,y\in\mathbb{R}^d\times\mathbb{R}^d$ are arbitrary, and assume for the moment that $\tau>0$. It is clear that if $T_*<\infty$, then $\tau<T_*$: by virtue of Proposition \ref{criteria}, $\theta$ must exhibit gradient blowup at $T_*$, while the fact that $\partial_\xi\Omega(t,0)<\infty$ would lead to a uniform bound for the gradient on the interval $[0,T_*]$. It follows that by continuity, $\theta(\tau,\cdot)$ has $\Omega(\tau,\cdot)$ as a modulus of continuity, albeit not necessarily strict. The idea is then to construct $\Omega$ such that if $\tau>0$, then $\tau=\infty$, contradicting the fact that $T_*<\infty$ and providing the explicit bound 
\[
\left\|\nabla\theta(t,\cdot)\right\|_{L^{\infty}}<\partial_\xi\Omega(t,0),\ \forall t\geq0.
\] 
To show that $\tau=\infty$, it will be sufficient to rule out the ``breakthrough'' scenario 
\begin{equation}\label{breakthrough}
|\theta(\tau,x)-\theta(\tau,y)|=\Omega(\tau,|x-y|),\quad \text{any }x\neq y.
\end{equation}
Indeed, suppose scenario \eqref{breakthrough} is not possible. As $\tau<T_*$, the solution is still $C^2$ in space at time $\tau$, and for a short time beyond that. It follows that Lemma \ref{gradbd} is applicable, and so $\|\nabla\theta(\tau,\cdot)\|_{L^{\infty}}<\partial_\xi\Omega(\tau,0)$. This guarantees that the strict modulus of continuity can never be violated in a neighborhood of the diagonal $x=y$, and so the same must be true for a short time beyond time $\tau$ and small $|x-y|$, say $|x-y|\in(0,\delta)$ some $\delta>0$. Since the solution is bounded in space in a neighborhood of time $\tau$, while $\Omega$ is unbounded in space and nondecreasing in time guarantees that the strict modulus of continuity is not violated for a short time beyond $\tau$ and large $|x-y|$, say $|x-y|\geq K$, some $K>\delta$. The only troublesome case is extending the time $\tau$ when $|x-y|\in[\delta,K]$ without assuming any bound on $x$ or $y$. This can be done under the assumption that the solution is either periodic or vanishes at spatial infinity (both properties which are inherited from the initial data, Remark \ref{imprmk}). Let us now make this rigorous. No $C^2$ or concavity assumptions on $\Omega(t,\cdot)$ are necessary for the proof of the next proposition. See also \cite[Lemma~2.3]{Kiselev2011}.
\begin{prop}\label{brkthruscen}
Suppose $\Omega\in C\left([0,\infty)\times[0,\infty)\right)$ is such that $\Omega(t,\cdot)$ is an unbounded strong modulus of continuity for each $t\geq0$, and that $\Omega(\cdot,\xi)$ is nondecreasing as a function of time for each $\xi\geq0$. Suppose $\theta_0$ has $\Omega(0,\cdot)$ as a strict modulus of continuity, and let $\theta$ be the (short-time) strong solution to \eqref{maineq} corresponding to $\theta_0$. Assume further that either $\theta_0$ is periodic with period $L>0$ or vanishes at infinity, and let $T_*$ and $\tau$ be as defined in \eqref{defmaxT} and \eqref{deftau}, respectively. It follows that $\tau>0$ and if $\tau<\infty$, then we must have $|\theta(\tau,x)-\theta(\tau,y)|=\Omega(\tau, |x-y|)$ for some $x\neq y$.
\end{prop}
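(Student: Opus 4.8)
The plan is to prove both assertions with a single three–regime comparison argument, applied first at the initial time (which yields $\tau>0$) and then at time $\tau$ (which forces the breakthrough \eqref{breakthrough} there, unless $\tau=\infty$). Throughout I use that a strong solution lies in $C\left([0,T];W^{1,\infty}\left(\mathbb{R}^d\right)\right)$ for every $T<T_*$, is $C^2$ in space for $t>0$ by Proposition \ref{parabolic}, and — by Proposition \ref{construct} and Remark \ref{imprmk} — that $\theta(t,\cdot)$ is periodic with period $L$ (resp.\ vanishes at infinity) whenever $\theta_0$ is (resp.\ does). I also record at the outset that $\tau\le T_*$ by definition and that $\tau<T_*$ whenever $\tau<\infty$: for $t\in[0,\tau)$ the solution is $C^2$ in space (using the reduction to $C^2$ initial data permitted by Theorem \ref{shortimeexis} when $t=0$) and has $\Omega(t,\cdot)$ as a strong modulus of continuity, so Lemma \ref{gradbd} gives $\|\nabla\theta(t,\cdot)\|_{L^{\infty}}<\partial_\xi\Omega(t,0)\le\sup_{t\in[0,\tau]}\partial_\xi\Omega(t,0)<\infty$; were $\tau=T_*<\infty$ this would contradict the gradient--blowup criterion of Proposition \ref{criteria}, and if $T_*=\infty$ then $\tau<T_*$ is automatic once $\tau<\infty$.

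For $\tau>0$, fix $x\ne y$, write $\xi=|x-y|$, and split according to $\xi\in(0,\delta]$, $\xi\in[\delta,K]$, or $\xi\ge K$. \emph{Near the diagonal:} since $\theta_0\in C^2$ has strong modulus $\Omega(0,\cdot)$, Lemma \ref{gradbd} gives $\|\nabla\theta_0\|_{L^{\infty}}<\partial_\xi\Omega(0,0)$, so by continuity of $t\mapsto\|\nabla\theta(t,\cdot)\|_{L^{\infty}}$ and of $t\mapsto\partial_\xi\Omega(t,0)$ there are $t_1>0$, $c>0$ with $\|\nabla\theta(t,\cdot)\|_{L^{\infty}}<\partial_\xi\Omega(t,0)-c$ on $[0,t_1]$; since $\Omega(t,\xi)/\xi=\xi^{-1}\int_0^\xi\partial_\xi\Omega(t,\eta)\,d\eta$ and $\partial_\xi\Omega$ is uniformly continuous on $[0,t_1]\times[0,1]$, there is $\delta\in(0,1]$ with $\Omega(t,\xi)/\xi>\partial_\xi\Omega(t,0)-c$ for all $t\in[0,t_1]$, $\xi\in(0,\delta]$, hence $|\theta(t,x)-\theta(t,y)|\le\|\nabla\theta(t,\cdot)\|_{L^{\infty}}\,\xi<\Omega(t,\xi)$. \emph{Far from the diagonal:} with $M:=\sup_{t\in[0,t_1]}\|\theta(t,\cdot)\|_{L^{\infty}}<\infty$, choose $K>\delta$ with $\Omega(0,K)>2M$; then $\Omega(t,\xi)\ge\Omega(0,\xi)\ge\Omega(0,K)>2M\ge|\theta(t,x)-\theta(t,y)|$ for $\xi\ge K$, using that $\Omega(\cdot,\xi)$ is nondecreasing and $\Omega(t,\cdot)$ unbounded. \emph{Intermediate range:} since $\theta_0$ has \emph{strict} modulus $\Omega(0,\cdot)$, the continuous function $(x,y)\mapsto\Omega(0,|x-y|)-|\theta_0(x)-\theta_0(y)|$ is strictly positive on $\{\delta\le|x-y|\le K\}$, and a compactness argument using either periodicity (translate $x$ into a fundamental cube and note $x-y$ ranges over a compact annulus) or vanishing at infinity (when both points are far from the origin $|\theta_0(x)-\theta_0(y)|$ is controlled by $\Omega(0,\delta)$, while the complementary set, intersected with the slab, is compact) produces a uniform gap $|\theta_0(x)-\theta_0(y)|\le\Omega(0,|x-y|)-2\rho$ on that slab for some $\rho>0$. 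Shrinking $t_1$ so that $\|\theta(t,\cdot)-\theta_0\|_{L^{\infty}}<\rho$ on $[0,t_1]$ (Proposition \ref{wellposedness}), we get $|\theta(t,x)-\theta(t,y)|\le|\theta_0(x)-\theta_0(y)|+2\|\theta(t,\cdot)-\theta_0\|_{L^{\infty}}<(\Omega(0,|x-y|)-2\rho)+2\rho=\Omega(0,|x-y|)\le\Omega(t,|x-y|)$ for $\xi\in[\delta,K]$. Combining the three regimes shows the strict modulus persists on $[0,t_1]$, so $\tau\ge t_1>0$.

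For the dichotomy, assume $\tau<\infty$; then $\tau<T_*$, so $\theta(\tau,\cdot)\in C^2$ (as $\tau>0$) and $\|\theta(t,\cdot)\|_{W^{1,\infty}}$ stays bounded near $\tau$; letting $t\uparrow\tau$ in the strict modulus yields $|\theta(\tau,x)-\theta(\tau,y)|\le\Omega(\tau,|x-y|)$ for all $x,y$. If \eqref{breakthrough} failed, this inequality would be strict for every $x\ne y$, so $\theta(\tau,\cdot)$ would be a $C^2$ function with strict strong modulus $\Omega(\tau,\cdot)$ inheriting the periodicity or decay of $\theta_0$ (Remark \ref{imprmk}); then the identical three--regime argument with the time origin moved from $0$ to $\tau$ — invoking Lemma \ref{gradbd} at time $\tau$, boundedness of $\|\theta(t,\cdot)\|_{L^{\infty}}$ near $\tau$, and $\|\theta(t,\cdot)-\theta(\tau,\cdot)\|_{L^{\infty}}\to0$ as $t\downarrow\tau$ — would give some $\varepsilon>0$ for which the strict modulus holds on $[0,\tau+\varepsilon]$, contradicting the definition of $\tau$. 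Hence \eqref{breakthrough} must hold.

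I expect the intermediate range to be the main obstacle: it is the only step that genuinely needs the periodicity or vanishing--at--infinity hypothesis, since one must upgrade the pointwise strict inequality $|\theta(x)-\theta(y)|<\Omega(|x-y|)$ into one with a uniform positive gap over the whole slab $\{\delta\le|x-y|\le K\}$ with no a priori control on the location of $x$ and $y$, and modulo translations this slab is not compact. The near--diagonal step relies only on the defining properties of a strong modulus at $0$, and the far step only on the unboundedness of $\Omega$ together with the $L^{\infty}$ bound on the solution; the remaining work is bookkeeping — ensuring the thresholds $\delta$, $K$, $\rho$ and the interval length can be chosen uniformly in $t$, which is why step (i) is phrased through uniform continuity of $\partial_\xi\Omega$ on a compact box and why monotonicity of $\Omega(\cdot,\xi)$ in time is used at each step.
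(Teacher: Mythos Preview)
Your proof is correct and follows essentially the same three-regime strategy as the paper: Lemma \ref{gradbd} near the diagonal, unboundedness of $\Omega$ far from it, and a compactness argument (via periodicity or decay) on the intermediate slab, then replayed at time $\tau$. The only cosmetic difference is that on the intermediate slab the paper works with uniform continuity of $(t,x,y)\mapsto\Omega(t,|x-y|)-|\theta(t,x)-\theta(t,y)|$ directly on a compact space--time set, whereas you extract a uniform gap at the reference time and propagate it using $\|\theta(t,\cdot)-\theta(t_0,\cdot)\|_{L^{\infty}}\to0$.
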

\begin{proof}
	By virtue of Theorem \ref{shortimeexis} we may, without any loss in generality, assume $\theta_0$ is $C^2(\rd)$. In this case, Lemma \ref{gradbd} tells us that $\|\nabla\theta_0\|_{L^{\infty}}<\partial_\xi\Omega(0,0)$, and by continuity of the function $\left\|\nabla\theta(t,\cdot)\right\|_{L^{\infty}}$, this remains true for $t\in[0,\epsilon_0]$, some $\epsilon_0>0$. Set
	\begin{align*}
	&M_0:=\max_{t\in[0,\epsilon_0]}\left\|\nabla\theta(t,\cdot)\right\|_{L^{\infty}}<\partial_\xi\Omega(0,0),\\
	&M_1:=\max_{t\in[0,\epsilon_0]}\left\|\theta(t,\cdot)\right\|_{L^{\infty}},
	\end{align*}
	and for $\xi>0$, consider the function
	\[
	h(\xi):=M_0-\frac{\Omega(0,\xi)}{\xi}.
	\]
	Clearly, $h(\xi)<0$ for $\xi\in(0,\delta)$, some $\delta>0$. It follows that whenever $t\in[0,\epsilon_0]$ and $|x-y|\in(0,\delta)$, since $\Omega(\cdot,\xi)$ is nondecreasing as a function of time for each fixed $\xi\geq0$, we must have
	\[
	|\theta(t,x)-\theta(t,y)|\leq M_0|x-y|<\Omega(t,|x-y|).
	\]
	As $\Omega(0,\cdot)$ is unbounded and nondecreasing, there exists some $K>>\delta$ such that $\Omega(0,\xi)\geq 3M_1$ whenever $\xi\geq K$. It follows that whenever $t\in[0,\epsilon_0]$ and $|x-y|\in[K,\infty)$, 
	\[
	|\theta(t,x)-\theta(t,y)|\leq 2M_1<3M_1\leq\Omega(0,|x-y|)\leq\Omega(t,|x-y|).
	\]
	It remains to handle the case $|x-y|\in[\delta,K]$. If $\theta_0$ is $L-$ periodic, then so is $\theta(t,\cdot)$ (Remark \ref{imprmk}) and in this case, we first define
	\[
	\mathcal{A}:=\left\{(x,y)\in[0,L]^d\times\mathbb{R}^d:|x-y|\in[\delta,K]\right\},
	\]
	and note that since the set $[0,\epsilon_0]\times \mathcal{A}$ is compact, the function
	\[
	R(t,x,y):=|\theta(t,x)-\theta(t,y)|-\Omega(t,|x-y|),
	\]
	is uniformly continuous on it, and as $R(0,x,y)<0$, the same must be true on $[0,\epsilon]\times \mathcal{A}$, some $\epsilon\in(0,\epsilon_0]$. As $\theta(t,\cdot)$ is $L-$ periodic, this proves that $\tau\geq\epsilon>0$. 
	
	On the other hand, if $\theta_0$ vanishes at spatial infinity, one can chose a large enough $K_0$ and a small enough $\epsilon_1\in(0,\epsilon_0]$ such that $|\theta(t,z)|\leq \Omega(0,\delta)/4$ whenever $|z|\geq K_0$, $t\in[0,\epsilon_1]$, owing to the fact that $\partial_t\theta\in L^1([0,T_*];L^{\infty}(\mathbb{R}^d))$. We now decompose the set
	\[
	\mathcal{B}:=\left\{(x,y)\in\mathbb{R}^d\times\mathbb{R}^d: |x-y|\in[\delta,K]\right\}
	\]
	into $\mathcal{B}_1\cup \mathcal{B}_2$ where
	\[
	\mathcal{B}_1:=\left\{(x,y)\in \mathcal{B}: \min\{|x|,|y|\}>K_0\right\},
	\]
	and $\mathcal{B}_2$ is the complement of the set $\mathcal{B}_1$. By choice of $K_0$ and $\epsilon_1$, we have 
	\[
	|\theta(t,x)-\theta(t,y)|<\Omega(0,\delta)\leq\Omega(t,|x-y|),\ (t,x,y)\in[0,\epsilon_1]\times \mathcal{B}_1,
	\]	
	since $\Omega$ is nondecreasing in both variables. It is fairly straightforward to verify that $\mathcal{B}_2$ is compact, and so as in the periodic case, one can find a small enough $\epsilon\in(0,\epsilon_1]$ such that $\Omega$ is not violated on $[0,\epsilon]$. 
	
	The second part of the proposition follows by similar arguments. As discussed previously, the solution has not exhibited any blowup on $[0,\tau]$, and so $\theta(\tau,\cdot)$ has modulus of continuity $\Omega(\tau,\cdot)$, albeit not necessarily strict. Therefore, Lemma \ref{gradbd} can still be applied and we have the strict bound $\left\|\nabla\theta(\tau,\cdot)\right\|_{L^{\infty}}<\partial_\xi\Omega(\tau,0)$. Since $\theta(\tau,\cdot)\in W^{1,\infty}\left(\mathbb{R}^d\right)$, by Theorem \ref{shortimeexis} and Remark \ref{imprmk}, the solution is smooth for a short time beyond $\tau$, and is periodic or vanishes at infinity if $\theta_0$ is. Therefore, assuming that
	\[
	|\theta(\tau,x)-\theta(\tau,y)|<\Omega(\tau,|x-y|),\ \forall x\neq y,
	\]
	allows us to repeat the above argument and prolong the time $\tau$, by some $\epsilon>0$, contradicting the definition of $\tau$. Hence, if $\tau<\infty$, we must have
	\[
	|\theta(\tau,x)-\theta(\tau,y)|=\Omega(\tau,|x-y|),
	\]
	for some $x\neq y$.
\end{proof}
\subsection{Constructing the Modulus of Continuity.}\label{constsec}
We start by analyzing the breakthrough scenario described in Proposition \ref{brkthruscen}, i.e. we assume $\tau$ as defined in \eqref{deftau} is positive and finite, so that
\[
|\theta(\tau,x^0)-\theta(\tau,y^0)|=\Omega(\tau,|x^0-y^0|),
\]
for some $x^0\neq y^0$. Since everything is rotation and translation invariant, we may assume that the strict modulus of continuity is violated at some $x^0\neq y^0$, with $x^0-y^0=(\xi,0,\cdots,0)$, some $\xi>0$. Further, it is sufficient to assume
\[
\theta(\tau,x^0)-\theta(\tau,y^0)=\Omega(\tau,\xi),
\]
the case when $\theta(\tau,x^0)<\theta(\tau,y^0)$ is handled similarly. To rule out this scenario, we consider the function
\[
g(t):=\theta(t,x^0)-\theta(t,y^0)-\Omega(t,\xi),
\]
on the interval $[0,\tau+\epsilon]$, some small enough $\epsilon$, and we construct $\Omega$ such that $g'(\tau)<0$, for any $\xi>0$ and for which $\theta_0$ strictly obeys $\Omega(0,\cdot)$. To do so, we start by obtaining a bound on $g'(\tau)$ by using the fact that the PDE holds pointwise to get that
\begin{align}\label{eqbrkthrgh}
g'(\tau)=&\nu\left(\Delta\theta(\tau,x^0)-\Delta\theta(\tau,y^0)\right)-\partial_t\Omega(\tau,\xi)\nonumber\\
&+\lambda\left|\nabla\theta(\tau,x^0)\right|^p-\lambda\left|\nabla\theta(\tau,y^0)\right|^p+\mu\left(-\Delta\right)^{\alpha}\theta(\tau,x^0)-\mu\left(-\Delta\right)^{\alpha}\theta(\tau,y^0).
\end{align}
The first line in equation \eqref{eqbrkthrgh} is of stabilizing nature, while the second may cause instabilities. From \eqref{derv}, we see that 
\[
\left|\nabla\theta(\tau,x^0)\right|^p-\left|\nabla\theta(\tau,y^0)\right|^p=0,
\]
while \eqref{lap} and \eqref{modlap} give us
\begin{align*}
&\nu\left(\Delta\theta(\tau,x^0)-(\Delta\theta(\tau,y^0)\right)+\mu(-\Delta)^{\alpha}\theta(\tau,x^0)-\mu(-\Delta)^{\alpha}\theta(\tau,y^0)\\
&\leq 4\nu\partial^2_\xi\Omega(\tau,\xi)+\mu C_{d,\alpha}\int_0^\xi\frac{\partial_\eta\Omega(\tau,\eta)}{\eta^{2\alpha}}\ d\eta.
\end{align*}
Therefore, we obtain
\begin{equation*}
g'(\tau)\leq 4\nu\partial^2_\xi\Omega(\tau,\xi)-\partial_t\Omega(\tau,\xi)+\mu C_{d,\alpha}\int_0^\xi\frac{\partial_\eta\Omega(\tau,\eta)}{\eta^{2\alpha}}\ d\eta,
\end{equation*}
and so our aim is to construct an $\Omega$ that satisfies the hypothesis of Proposition \ref{brkthruscen} and for which 
\begin{equation}\label{gprime}
	\partial_t\Omega(t,\xi)-4\nu\partial^2_\xi\Omega(t,\xi)-\mu C_{d,\alpha}\int_0^\xi\frac{\partial_\eta\Omega(t,\eta)}{\eta^{2\alpha}}\ d\eta>0,
\end{equation}
for every $(t,\xi)\in (0,\infty)\times(0,\infty)$. To that extent, we start by defining
\[
\omega(\xi):=\frac{\xi}{1+\xi^{1-\alpha}},\quad \xi\geq0.
\]
Clearly, $\omega$ is an unbounded strong modulus of continuity: it is concave, grows like $\xi^{\alpha}$, $\omega'(0)=1$ and $\omega''(\xi)=-O(\xi^{-\alpha})$ near $\xi=0$. Next, chose a sufficiently large $B=B\left(\|\theta_0\|_{W^{1,\infty}}\right)$ such that $\theta_0$ has $\omega_B(\xi):=\omega(B\xi)$ as a strict modulus of continuity (owing to Lemma \ref{buildmod}), and let $\delta_0=\delta_0(B,\nu,\alpha,\mu,d)>0$ be a small number to be determined later. Set 
\[
C_0:=\frac{\mu C_{d,\alpha}}{\omega_B(\delta_0)}\left(\frac{B^{2\alpha-1}}{\delta_0}+\frac{B^{\alpha}}{\delta_0^{\alpha}}+\frac{B\delta_0^{1-2\alpha}}{(1-2\alpha)}\right),
\]
and let $f(t)=\exp(C_0t)$, which solves
\begin{equation}\label{deff}
f'(t)-C_0f(t)=0,\quad f(0)=1.	
\end{equation}
Finally, define
\[
\Omega(t,\xi):=f(t)\omega_B(\xi), \quad (t,\xi)\in[0,\infty)\times[0,\infty),
\]
and note that $\Omega$ satisfies the hypothesis of Proposition \ref{brkthruscen}.  Now, $\delta_0$ will be chosen small enough such that the dissipative term alone will balance the local instabilities arising from the nonlocal part for $\xi\in(0,\delta_0]$, while the time-dependent part of $\Omega$ will balance those instabilities away from $\delta_0$. To see this, as $\omega$ is concave, we have $\omega_B'(\xi)\leq B$, and so
\[
\int_0^\xi\frac{\partial_\eta\Omega(t,\eta)}{\eta^{2\alpha}}\ d\eta\leq f(t)(1-2\alpha)^{-1}B\xi^{1-2\alpha},
\] 
and as $\displaystyle{\lim_{\xi\rightarrow0^+}\omega_B''(\xi)=-\infty}$, one can choose a $\delta_0=\delta_0(B,\nu,\alpha,\mu,d)>0$ such that 
\[
4\nu\omega_B''(\xi)+\mu C_{d,\alpha}(1-2\alpha)^{-1}B\xi^{1-2\alpha}<0,
\]
whenever $\xi\in(0,\delta_0]$. 
A straightforward calculation yields that 
\[
\delta_0\leq \min\left\{B^{-1},\left(\frac{\nu(1-2\alpha)}{8\mu C_{d,\alpha}}\right)^{\frac{1}{(1-\alpha)}}\right\}.
\]
This immediately implies that \eqref{gprime} is true for any $(t,\xi)\in[0,\infty)\times(0,\delta_0]$, as $f$ is positive and nondecreasing. When $\xi\geq\delta_0$ our aim is to bound \eqref{modlap} uniformly in $\xi$, and so we use the bound 
\[
\omega_B'(\eta)\leq B^{2\alpha-1}\eta^{2\alpha-2}+\alpha B^{\alpha}\eta^{\alpha-1},
\]
to get 
\[
\int_{\delta_0}^\xi\frac{\partial_\eta\Omega(t,\eta)}{\eta^{2\alpha}}\ d\eta\leq f(t)\left(\frac{B^{2\alpha-1}}{\delta_0}+\frac{B^{\alpha}}{\delta_0^{\alpha}}\right).
\]
Therefore, whenever $(t,\xi)\in[0,\infty)\times(\delta_0,\infty)$, using the fact that $\omega_B$ is concave, we can bound the left-hand side of \eqref{gprime} from below by  
\[
f'(t)\omega_B(\delta_0)-f(t)\mu C_{d,\alpha}\left(\frac{B^{2\alpha-1}}{\delta_0}+\frac{B^{\alpha}}{\delta_0^{\alpha}}+\frac{B\delta_0^{1-2\alpha}}{(1-2\alpha)}\right),
\]
making \eqref{gprime} true for $(t,\xi)\in[0,\infty)\times(\delta_0,\infty)$ by choice of $f$ \eqref{deff}, thereby concluding the proof of Theorem \ref{mainthm}.

\section{Concluding Remarks}

Let us start by commenting on the exponential growth observed in bound \eqref{bdlipsch}. As opposed to the scenario in the SQG and critical Burgers equation analyzed in \cite{KNS2008,KNV2007}, the instabilities in our case manifest themselves in estimate \eqref{modlap}, which cannot be made to decay in $\xi$. This is the main technical difficulty that forces us to allow the modulus to depend on time, as the best we could do is construct a modulus such that \eqref{modlap} is bounded. Therefore, if we do not ``absorb'' that term by a function of time, one would require the concavity of the modulus to be bounded from above by some fixed negative constant, since this will be the only positive quantity in inequality \eqref{gprime}. But this immediately implies that at some large enough $\xi$, the modulus becomes decreasing, and in fact, negative at even larger $\xi$. One might be able to overcome this in the periodic setting by constructing a more sophisticated modulus, since one only needs to rule out the ``breakthrough'' scenario for $\xi$ in some compact set in this case. 

A possible approach to prove that regularity persists under evolution when $\alpha=1/2$, and to eliminate time dependence in \eqref{bdlipsch}, is the following. Recall that the main difficulty in studying evolution of moduli of continuity under the original MS model \eqref{MS} is the lack of pointwise control of $(-\Delta)^{1/2}\theta$. However, one can bootstrap control of the Lipschitz constant, and obtain, via energy techniques, a bound on a high enough Sobolev norm. Owing to the Sobolev embedding theorem, we obtain a pointwise bound or even continuity (H\"older) estimate for the term $(-\Delta)^{1/2}\theta$ in \eqref{eqbrkthrgh}. Nevertheless, the time-dependent part of the modulus will now have to satisfy a first order ODE whose solution blows up in finite time, rendering the separation of variables approach useless.

On the other hand, Lemma \ref{gradbd} is still valid for moduli of continuity of the form
\[
\Omega(t,\xi):=\begin{cases}
\omega_{\mathcal{L}}(\xi),&\xi\in[0,\delta],\\
\omega_{\mathcal{R}}(t,\xi), &\xi\in(\delta,\infty),
\end{cases}
\] 
allowing for uniform-in-time control over the Lipschitz constant. However, solving the heat equation by a simple separation of variables for the right part of the modulus, $\omega_{\mathcal{R}}$, now results in a jump discontinuity at $\delta$, introducing various technical difficulties in the proof. Upon sharing the above remarks and results with the author's current doctoral adviser Edriss Titi, it was suggested that in order to adapt the approach for the MS model \eqref{MS}, one should instead try to solve the heat equation implicitly to patch the break in the modulus at $\delta$ \cite{TitiPrivateComm}. That is, solve a boundary value problem for $\omega_{\mathcal{R}}$. Ideally, one would want the modulus to be at least $C^1$ in space, and so this amounts to prescribing Cauchy data to the forced heat equation at $\xi=\delta$, resulting in an overdetermined problem. It was also suggested by Titi to relax one of the boundary conditions instead of trying to solve the overdetermined problem. The main technical difficulty in solving this so-called lateral Cauchy problem is the lack of a minimum principle, in particular one can no longer guarantee positivity of the modulus. That being said, it is natural to relax the Neumann condition, in order to guarantee concavity of the modulus, at least near 0. This is remaining faithful to the spirit of the ideas presented in \cite{Kiselev2011, KNS2008,KN2010,KNV2007}, mainly in order to be able to deal with the nonlinearity and extract dissipation at $\delta$ (apply Lemma \ref{dervh}). In simple words, this translates to showing that the Dirichlet to Neumann map is not increasing in time, at least on an arbitrary interval of time $[0,T]$. This is currently under investigation by the author, where in the thesis \cite{Ibdah2020}, we study this in more details, as well apply this technique to other nonlinear-nonlocal PDEs. 

In light of trying to address the case when $\alpha=1/2$, we briefly discuss the recent work of Miao and Xue \cite{MX2019} (building upon \cite{KN2010, Kiselev2011}) which was brought to our attention by one of the anonymous referees. In the aforementioned paper \cite{MX2019}, the authors analyzed the following one-dimensional dissipative-dispersive perturbation of Burgers equation 
\begin{equation}\label{MX}
\partial_tu+(-\Delta)^{\gamma/2}u=u\partial_xu+L_\beta u,\quad \gamma\in[\beta,2],
\end{equation}
with $L_\beta$ being an operator of order at most $\beta$ with an odd kernel. For the main results of their work, they assumed $\beta\in(0,1)$; however a key estimate that was derived (and utilized for the case when $\gamma=1$) in \cite{MX2019} is the following bound: for any $\gamma\in[\beta,2)$ (including $\beta=1$), if $\xi:=|x-y|$ and $u(t,x)-u(t,y)=\Omega(t,\xi)$, then for some positive $C>0$ depending on $\gamma$ and $\beta$, the authors show that
\begin{equation}\label{bdMX}
L_\beta u(t,x)-L_\beta u(t,y)\leq -C\xi^{\gamma-\beta}D_\gamma+C\xi\int_\xi^{\infty}\frac{\Omega(t,\eta)}{\eta^{2+\beta}}d\eta+C\frac{\Omega(t,\xi)}{\xi^{\beta}},
\end{equation}
where $D_\gamma=(-\Delta)^{\gamma/2}u(t,y)-(-\Delta)^{\gamma/2}u(t,x)$. Notice that as in Lemma \ref{dervh} in this paper, $D_\gamma$ is strictly negative. The estimate \eqref{bdMX} is indeed surprising when $\beta=1$: one should not expect a continuity estimate on $Lu$ from $u$ when $L$ is an operator of order 1 or higher. Estimate \eqref{bdMX} does not violate this general rule, since it is valid only when we are at the breakthrough scenario depicted by Proposition \ref{brkthruscen}, and not for any $x,y$. Since we only care about the breakthrough scenario in our analysis, it is natural to ask if we can make use of such an estimate, by replacing classical dissipation with fractional, and adopting the approach of \cite{MX2019} here.

Unfortunately, we do not believe this can be done here, regardless whether dissipation is fractional or classical. Indeed, the strategy of \cite{MX2019} is to upgrade the regularity in steps: from $L_t^{\infty}L_x^2$ to $L_t^{\infty}L_x^\infty$ to $L_t^\infty C_x^{0,\delta}$, some $\delta\in(0,1]$, with the propagation of moduli of continuity (and utilizing bound \eqref{bdMX}) being applied in going from $L_t^{\infty}L_x^\infty$ to $L_t^\infty C_x^{0,\delta}$. If we ignore the term $L_\beta$, we end up with the critically dissipative fractional Burgers equation when $\gamma=1$, and so the bound $L_t^{\infty}L_x^{\infty}$ is not sufficient to deduce regularity, which is why the authors of \cite{MX2019} needed to go from $L_t^{\infty}L_x^\infty$ to $L_t^\infty C_x^{0,\delta}$. As a consequence of the $L_t^{\infty}L_x^{\infty}$ bound, one only needs to rule out the equality $u(t,x)-u(t,y)=\Omega(t,\xi)$ for $\xi$ in some bounded set, and hence by carefully constructing a stationary (independent of time) $\Omega$ that depends on the $L_t^{\infty}L_x^{\infty}$ bound, the authors were able to absorb the term $\xi^{\gamma-\beta}D_\gamma$ in the viscous one.

We do not have such luxury here, in particular, there is no a-priori $L_t^{\infty}L_x^{2}$ bound to bootstrap to $L_t^{\infty}L_x^{\infty}$ as in \cite[Lemma~2.6]{MX2019} (even if we consider the evolution of $u=\nabla\theta$ and $p=2$ to end up with the transport version of Burgers equation in multi-dimensions). In fact, one of the reasons that we were interested in the MS model is the lack of any obvious a-priori bounds. Unless the approach of \cite{MX2019} can be modified to bypass the $L_t^{\infty}L_x^{2}$ estimate, it does not seem to be applicable here. Actually if the proof of \cite[Lemma~2.6]{MX2019} can be modified to bypass the energy estimate, then we can probably apply it directly to the evolution of $u=\nabla\theta$ to obtain the required estimate in our scenario without having to propagate moduli of continuity. In fact, using similar arguments utilized in deducing regularity to the subcritical Burgers and SQG equation from the $L_t^{\infty}L_x^{\infty}$ bound, it will also be possible to deduce global regularity for the model
\[
\partial_tu+(-\Delta u)^{\gamma/2}=(u\cdot\nabla) u+(-\Delta)^{1/2}u,\quad \gamma\in(1,2),
\]
even when $u$ is not conservative ($u\neq\nabla\theta$), if one can improve on \cite[Lemma~2.6]{MX2019} as described above. This is because the moment we get an $L^{\infty}$ bound, we will be able to close the $H^s$ energy estimates (provided $\gamma>1$) by standard product and interpolation inequalities, and the linear, nonlocal one would not introduce dramatic difficulties (again, as long as $\gamma>1$).

So the next question is whether bound \eqref{bdMX} can be utilized directly in studying the evolution of $\Omega$, keeping in mind that we need to rule out the breakthrough scenario for $\xi\in(0,\infty)$ and not just in some bounded set, due to the lack of an $L_t^\infty L_x^{\infty}$ bound. The viscous term (fractional or classical) is very powerful over small distances, so the difficulty is when $\xi\in(1,\infty)$. The term $-C\xi^{\gamma-\beta}D_\gamma$ cannot be absorbed by the viscous one for $\xi$ in this region (even with fractional dissipation, $D_\gamma$). It also cannot be absorbed by $\partial_t\Omega$, since we need an \emph{upper bound} on $-D_\gamma$ in terms of $\Omega$ with $\gamma>1$, which is not available (what we have is a lower bound on $-D_\gamma$, see \cite{Kiselev2011}). That is to say, we are back to square zero: lack of a continuity estimate on an operator of order one or higher. That being said, a possible scenario where one can use bound \eqref{bdMX} to address the case $\alpha=1/2$ in our work is if we replace standard dissipation with fractional, restrict ourselves to the periodic setting and impose a smallness condition on the period. In this scenario, although we still do not have an $L_t^{\infty}L_x^{\infty}$ bound, periodicity tells us that we only need to worry about $\xi\in(0,\kappa]$, where $\kappa$ would depend on the period. Then, we can use bound \eqref{bdMX} with $\beta=1$ and $\gamma\in (1,2)$ to see that if the period is small enough, we will be able to absorb the term $-C\xi^{\gamma-\beta}D_\gamma$ in the dissipative one, provided the latter is coming from $(-\Delta)^{\gamma/2}$. The leftover terms can be handled by the time derivative if necessary. We also want to point out that the fractional Laplacian does not have an odd kernel, an assumption that was made on the operator $L_\beta$ in \cite{MX2019}. But this probably is not the main issue at hand, one only has to verify that all the key estimates do not rely on this cancellation property (which they probably do not). The main issue is the lack of energy bounds.

While we are on the topic of fractional dissipation, our final remark is that one would expect results analogous to those obtained in this work to hold even if the dissipative operator, $(-\Delta)$ is replaced by a fractional one $(-\Delta)^{\gamma}$, where $\gamma\in[1/2,1]$ (we switched the power to $\gamma$ instead of the $\gamma/2$ used in the previous paragraph to be consistent with the notation in \cite{Kiselev2011}). Indeed, it was shown in \cite{Kiselev2011}, that the local dissipative power of $(-\Delta)^{\gamma}$ for small $\xi$ is, roughly speaking, 
\[
\xi^{2-2\gamma}\omega''(\xi),
\]
and as long as $\gamma\geq1/2$, one can still construct a modulus of continuity, according to Definition \ref{defmod}, such that 
\[
\lim_{\xi\rightarrow0^+}\xi^{1-2\gamma+2\alpha}\omega''(\xi)=-\infty.
\]
To obtain a local well-posedness result and regularity criteria in terms of $\|\nabla\theta\|_{L^{\infty}}$ in this case, one should be able to adapt the ideas in \cite{DI2006, I2005,Silvestre2011} and the references therein.
\section*{Acknowledgements}
The author would like to thank the anonymous referees for their useful comments and feedback. This work is part of my doctoral thesis, and so I would like to thank my adviser Edriss Titi for introducing me to the Michelson-Sivashinsky equation, which lead to this work, as well as an upcoming current work in progress. I also thank Titi for several discussions. My gratitude is extended to Peter Kuchment for several editorial remarks.
\bibliographystyle{abbrv}
\bibliography{mybib}

\begin{thebibliography}{10}

\bibitem{Ben-ArtziAmour1998}
L.~Amour and M.~Ben-Artzi.
\newblock Global existence and decay for viscous {H}amilton-{J}acobi equations.
\newblock {\em Nonlinear Anal.}, 31(5-6):621--628, 1998.

\bibitem{BBT2003}
H.~Bellout, S.~Benachour, and E.~S. Titi.
\newblock Finite-time singularity versus global regularity for hyper-viscous
  {H}amilton-{J}acobi-like equations.
\newblock {\em Nonlinearity}, 16(6):1967--1989, 2003.

\bibitem{BAMJL2000}
M.~Ben-Artzi, J.~Goodman, and A.~Levy.
\newblock Remarks on a nonlinear parabolic equation.
\newblock {\em Trans. Amer. Math. Soc.}, 352(2):731--751, 2000.

\bibitem{CaffarelliSilvestre2007}
L.~Caffarelli and L.~Silvestre.
\newblock An extension problem related to the fractional {L}aplacian.
\newblock {\em Comm. Partial Differential Equations}, 32(7-9):1245--1260, 2007.

\bibitem{CC2004}
A.~C\'{o}rdoba and D.~C\'{o}rdoba.
\newblock A maximum principle applied to quasi-geostrophic equations.
\newblock {\em Comm. Math. Phys.}, 249(3):511--528, 2004.

\bibitem{DKSV2014}
M.~Dabkowski, A.~Kiselev, L.~Silvestre, and V.~Vicol.
\newblock Global well-posedness of slightly supercritical active scalar
  equations.
\newblock {\em Anal. PDE}, 7(1):43--72, 2014.

\bibitem{DI2006}
J.~Droniou and C.~Imbert.
\newblock Fractal first-order partial differential equations.
\newblock {\em Arch. Ration. Mech. Anal.}, 182(2):299--331, 2006.

\bibitem{EvansPDE2010}
L.~C. Evans.
\newblock {\em Partial differential equations}, volume~19 of {\em Graduate
  Studies in Mathematics}.
\newblock American Mathematical Society, Providence, RI, second edition, 2010.

\bibitem{Friedmanbook1964}
A.~Friedman.
\newblock {\em Partial differential equations of parabolic type}.
\newblock Prentice-Hall, Inc., Englewood Cliffs, N.J., 1964.

\bibitem{GS1990}
S.~Gutman and G.~Sivashinsky.
\newblock The cellular nature of hydrodynamic flame instability.
\newblock {\em Phys. D}, 43(1):129 -- 139, 1990.

\bibitem{Ibdah2020}
H.~Ibdah.
\newblock On preservation of moduli of continuity by parabolic evolution.
\newblock In preparation, 2021.

\bibitem{I2005}
C.~Imbert.
\newblock A non-local regularization of first order {H}amilton-{J}acobi
  equations.
\newblock {\em J. Differential Equations}, 211(1):218--246, 2005.

\bibitem{Kiselev2011}
A.~Kiselev.
\newblock Nonlocal maximum principles for active scalars.
\newblock {\em Adv. Math.}, 227(5):1806--1826, 2011.

\bibitem{KN2010}
A.~Kiselev and F.~Nazarov.
\newblock Global regularity for the critical dispersive dissipative surface
  quasi-geostrophic equation.
\newblock {\em Nonlinearity}, 23(3):549--554, 2010.

\bibitem{KNS2008}
A.~Kiselev, F.~Nazarov, and R.~Shterenberg.
\newblock Blow up and regularity for fractal {B}urgers equation.
\newblock {\em Dyn. Partial Differ. Equ.}, 5(3):211--240, 2008.

\bibitem{KNV2007}
A.~Kiselev, F.~Nazarov, and A.~Volberg.
\newblock Global well-posedness for the critical 2{D} dissipative
  quasi-geostrophic equation.
\newblock {\em Invent. Math.}, 167(3):445--453, 2007.

\bibitem{KO2013}
O.~Kupervasser and Z.~Olami.
\newblock Random noise and pole-dynamics in unstable front propagation.
\newblock {\em Combustion, Explosion, and Shock Waves}, 49(2):141--152, Mar
  2013.

\bibitem{Kuramoto1978}
Y.~Kuramoto.
\newblock Diffusion-induced chaos in reaction systems.
\newblock {\em Progress of Theoretical Physics Supplement}, 64:346--367, 02
  1978.

\bibitem{KuramotoTsuzuki1975}
Y.~Kuramoto and T.~Tsuzuki.
\newblock On the formation of dissipative structures in reaction-diffusion
  systems: Reductive perturbation approach.
\newblock {\em Progress of Theoretical Physics}, 54(3):687--699, 09 1975.

\bibitem{KuramotoTsuzuki1976}
Y.~Kuramoto and T.~Tsuzuki.
\newblock Persistent propagation of concentration waves in dissipative media
  far from thermal equilibrium.
\newblock {\em Progress of Theoretical Physics}, 55(2):356--369, 02 1976.

\bibitem{LariosYamazaki2019}
A.~Larios and K.~Yamazaki.
\newblock On the well-posedness of an anisotropically-reduced two-dimensional
  {K}uramoto-{S}ivashinsky equation.
\newblock {\em Phys. D}, 411:132560, 14, 2020.

\bibitem{Matalon2007}
M.~Matalon.
\newblock Intrinsic flame instabilities in premixed and nonpremixed combustion.
\newblock {\em Annual Review of Fluid Mechanics}, 39(1):163--191, 2007.

\bibitem{MX2019}
Q.~Miao and L.~Xue.
\newblock Regularity and singularity results for the dissipative {W}hitham
  equation and related surface wave equations.
\newblock {\em Commun. Math. Sci.}, 17(8):2141--2190, 2019.

\bibitem{MS1982}
D.~Michelson and G.~Sivashinsky.
\newblock Thermal-expansion induced cellular flames.
\newblock {\em Combustion and Flame}, 48:211 -- 217, 1982.

\bibitem{MichelsonSivashinsky1977}
D.~M. Michelson and G.~I. Sivashinsky.
\newblock Nonlinear analysis of hydrodynamic instability in laminar flames.
  {II}. {N}umerical experiments.
\newblock {\em Acta Astronaut.}, 4(11-12):1207--1221, 1977.

\bibitem{Navin2010}
F.~Navin.
\newblock Interplay between background turbulence and darrieus-landau
  instability in premixed flames via a model equation.
\newblock Master's thesis, University of Illinois at Urbana-Champaign, May
  2010.

\bibitem{NicoSheurer1984}
B.~Nicolaenko and B.~Scheurer.
\newblock Remarks on the {K}uramoto-{S}ivashinsky equation.
\newblock {\em Phys. D}, 12(1-3):391--395, 1984.

\bibitem{OGKP1997}
Z.~Olami, B.~Galanti, O.~Kupervasser, and I.~Procaccia.
\newblock Random noise and pole dynamics in unstable front propagation.
\newblock {\em Phys. Rev. E}, 55:2649--2663, Mar 1997.

\bibitem{Pumir1985}
A.~Pumir.
\newblock Equation describing wrinkled flame fronts.
\newblock {\em Phys. Rev. A}, 31:543--546, Jan 1985.

\bibitem{PSbook2019}
P.~Quittner and P.~Souplet.
\newblock {\em Superlinear parabolic problems}.
\newblock Birkh\"{a}user Advanced Texts: Basler Lehrb\"{u}cher. [Birkh\"{a}user
  Advanced Texts: Basel Textbooks]. Birkh\"{a}user/Springer, Cham, 2019.
\newblock Blow-up, global existence and steady states, Second edition of [
  MR2346798].

\bibitem{Renardy1987}
M.~Renardy.
\newblock A model equation in combustion theory exhibiting an infinite number
  of secondary bifurcations.
\newblock {\em Phys. D}, 28(1-2):155--167, 1987.

\bibitem{Silvestre2007}
L.~Silvestre.
\newblock Regularity of the obstacle problem for a fractional power of the
  {L}aplace operator.
\newblock {\em Comm. Pure Appl. Math.}, 60(1):67--112, 2007.

\bibitem{Silvestre2011}
L.~Silvestre.
\newblock On the differentiability of the solution to the {H}amilton-{J}acobi
  equation with critical fractional diffusion.
\newblock {\em Adv. Math.}, 226(2):2020--2039, 2011.

\bibitem{SV2012}
L.~Silvestre and V.~Vicol.
\newblock H\"{o}lder continuity for a drift-diffusion equation with pressure.
\newblock {\em Ann. Inst. H. Poincar\'{e} Anal. Non Lin\'{e}aire},
  29(4):637--652, 2012.

\bibitem{Sivashinsky1977}
G.~I. Sivashinsky.
\newblock Nonlinear analysis of hydrodynamic instability in laminar flames.
  {I}. {D}erivation of basic equations.
\newblock {\em Acta Astronaut.}, 4(11-12):1177--1206, 1977.

\bibitem{Sivashinsky1983}
G.~I. Sivashinsky.
\newblock Instabilities, pattern formation, and turbulence in flames.
\newblock {\em Annual Review of Fluid Mechanics}, 15(1):179--199, 1983.

\bibitem{Stein1961}
E.~M. Stein.
\newblock The characterization of functions arising as potentials.
\newblock {\em Bull. Amer. Math. Soc.}, 67:102--104, 1961.

\bibitem{Stein1970book}
E.~M. Stein.
\newblock {\em Singular integrals and differentiability properties of
  functions}.
\newblock Princeton Mathematical Series, No. 30. Princeton University Press,
  Princeton, N.J., 1970.

\bibitem{Tadmor1986}
E.~Tadmor.
\newblock The well-posedness of the {K}uramoto-{S}ivashinsky equation.
\newblock {\em SIAM J. Math. Anal.}, 17(4):884--893, 1986.

\bibitem{TFH1988489}
O.~Thual, U.~Frisch, and M.~Hénon.
\newblock Application of pole decomposition to an equation governing the
  dynamics of wrinkled flame fronts.
\newblock In P.~Pelcé, editor, {\em Dynamics of Curved Fronts}, pages 489 --
  498. Academic Press, San Diego, 1988.

\bibitem{TitiPrivateComm}
E.~S. Titi.
\newblock {Private Communication}, 2019.

\bibitem{TP2019}
A.~Trucchia and G.~Pagnini.
\newblock Restoring property of the {M}ichelson–{S}ivashinsky equation.
\newblock {\em Combustion Science and Technology}, 191(9):1734--1741, 2019.

\bibitem{Wheeden1968}
R.~L. Wheeden.
\newblock On hypersingular integrals and {L}ebesgue spaces of differentiable
  functions.
\newblock {\em Trans. Amer. Math. Soc.}, 134:421--435, 1968.

\end{thebibliography}

\end{document}